\newtheorem{problem}{PROBLEM}
\newcommand{\F}{\mathcal{F}}
\newcommand{\R}{\mathbb{R}}
\newcommand{\N}{\mathbb{N}}
\newcommand{\T}{\mathbb{T}}
\newcommand{\Z}{\mathbb{Z}}
\newcommand{\Q}{\mathbb{Q}}
\newcommand{\C}{\mathbb{C}}
\newcommand{\gl}{\textnormal{GL}}
\renewcommand{\L}{\mathcal{L}}
\numberwithin{equation}{section}
\renewcommand{\H}{\mathcal{H}}
\let \Re \relax
\DeclareMathOperator{\Re}{Re}
\let \Im \relax
\DeclareMathOperator{\Im}{Im}
\newcommand{\M}{\mathcal{M}}
\renewcommand{\d}{\partial}
\let \div \relax
\DeclareMathOperator{\div}{div}
\newcommand{\nocontentsline}[3]{}
\newcommand{\tocless}[2]{\bgroup\let\addcontentsline=\nocontentsline#1{#2}\egroup}
\newtheorem{thm}{THEOREM}[section]
\newtheorem{remark}[thm]{REMARK}
\newtheorem{lem}[thm]{LEMMA}
\newtheorem{defn}[thm]{DEFINITION}
\newtheorem{prop}[thm]{PROPOSITION}
\newtheorem{cor}[thm]{COROLLARY}
\newtheorem{hyp}{Assumption}
\newcounter{thmbiss}
\renewcommand{\P}{\mathbb{P}}
\def\<{\langle}
\def\>{\rangle}
\newcommand{\scalp}[2]{\langle #1 \, , #2\rangle}
\newcommand{\ml}{m(\lambda)}
\title[F-equivalence for parabolic systems and applications to nonlinear PDE]{F-equivalence for parabolic systems and applications to the stabilization of nonlinear PDE}
\author{Vincent Boulard and Amaury Hayat}
\begin{document}
\begin{abstract}
We consider the $F$-equivalence problem for parabolic systems: under which conditions a control system, governed by a parabolic operator $A$ and a control operator $B$, can be made equivalent to \textcolor{black}{an exponentially stable system with arbitrarily large decay rate} %\textcolor{red}{parabolic} 
 through an appropriate control feedback law? While this problem has been resolved for finite-dimensional systems fifty years ago, good conditions for infinite-dimensional systems remain a challenge, especially for systems in spatial dimension larger than one.  Our main result establishes optimal conditions for the existence of an $F$-equivalence pair $(T,K)$ for a given parabolic control system $(A,B)$. We introduce an extended framework for $F$-equivalence of parabolic operators, addressing key limitations of existing approaches, and we prove that the pair $(T,K)$ is unique if and only if $(A,B)$ is approximately controllable. As a consequence, this provides a method to construct feedback operators for the rapid stabilization of nonlinear parabolic systems, possibly multidimensional in space. We provide several illustrative examples, including the rapid stabilization of the heat equation, the Kuramoto-Sivashinsky equation, the Navier-Stokes equations and the quasilinear heat equation.
\\

% This paper introduces an extended framework for $F$-equivalence of parabolic operators $A$, addressing key limitations of existing approaches. Our main results establish optimal conditions under which an $F$-equivalence $(T,K)$ can be found for a given parabolic control system $(A,B)$. Moreover, we show that under certain conditions on the nonlinearity, an associated semilinear system remains locally stabilizable with the feedback $K$. Finally, we prove that the pair $(T,K)$ is unique if and only if $(A,B)$ is approximately controllable. This work extends previous results by relaxing regularity constraints on the control operator and by broadening the applicability to higher-dimensional settings. Several illustrative examples are provided, including the rapid stabilization of the heat equation, the Kuramoto-Sivashinsky equation, and the Fisher-KPP equation.}

\textbf{MSC2020 classification}: 93D15, 35K58\\

\textbf{Keywords}: system equivalence; feedback stabilization; rapid stabilization; exponential stability; parabolic systems.
\end{abstract}
\maketitle
\tableofcontents

\setcounter{tocdepth}{2}
%\tableofcontents

\section{Introduction}

{\color{black}
\noindent Consider the following nonlinear control problem
\begin{equation}
\label{eq:sys0}
\partial_{t}u(t) = Au(t)+\mathcal{F}(u(t))+ B w(t),
\end{equation}
where $A$ is an unbounded linear operator on a Hilbert space $H$, $\mathcal{F}$ is a nonlinear perturbation that can also be unbounded,
%\textcolor{black}{but is $\mathcal{A}$-bounded}, 
%(in particular $\mathcal{A}$-bounded) 
%with $dF(0)=0$, 
$B$ is a given control operator, %\textcolor{red}{(possibly unbounded in $H$)} 
$w$ is a control that can be chosen and takes values in $U$, a given Hilbert space (possibly finite-dimensional), and $u(t) \in H$ is the state of the system.
An interesting question in control theory is to know whether we can rapidly stabilize the system with the control $w$, which can be formulated as follows
%whether the following holds.
%make question
\begin{problem}
For any $\lambda>0$, does there exist an operator $K=K_{\lambda}\in\mathcal{L}(D(A);U)$ such that, by choosing $w(t) = K_{\lambda}u(t)$, the system \eqref{eq:sys0} is (locally) exponentially stable with decay rate $\lambda$?
\end{problem}
\noindent ``Locally'' here only makes sense when $\mathcal{F}\neq 0$ and means locally around the equilibrium $u^{*}=0$. %and %\textcolor{red}{ with $B\;: U\rightarrow D(A^*)'$ such that $BK\in\mathcal{L}(H;D(A^*)')$.}\\
%Of course, depending on $K$ it is not even clear that the system is well-posed

%\subsection{The stabilization problem}
This question has been extensively investigated in the last decades in different frameworks and under different assumptions on $A$, $B$ and $\mathcal{F}$. 
%in particular when the system is linear (i.e. $\mathcal{F}=0$). 
Even when the system is linear (i.e. $\mathcal{F}=0$), answering this question in all generality is challenging. 
The first works date back (at least) to Slemrod \cite{slemrod1972linear} in 1972 in the case where $\mathcal{F}=0$ and $B$ is a bounded operator. Many results in this framework were obtained 
%in this framework 
using tools from optimal control and Linear-Quadratic (LQ) theory by Lions, Barbu, Lasiecka, Triggiani and many others \cite{lions1971optimal,barbu2018controllability,LT1,LT2,urquiza2005rapid,vest2013rapid}. This question was considered in the semilinear framework, i.e. $\mathcal{F}(H)\subset H$ in \cite{trelat2017stabilization}. Other approaches successfully obtained results even when $B$ is unbounded; one can cite, for instance, the observability approach of \cite{trelat2019characterization,liu2022characterizations,ma2023feedback,kunisch2025frequency}, in particular \cite{trelat2019characterization} shows that when $(A,B)$ is exactly null controllable the system can be rapidly stabilized\footnote{See also \cite{zabczyk2020mathematical} for exponential stability without requesting rapid stabilization, i.e. arbitrary $\lambda$.} and \cite{liu2022characterizations} gives a very nice characterization of stabilizability in terms of observability. Other results, inspired by optimal control approaches and using Riccati or Hamilton-Jacobi-Bellman equations, were obtained on semilinear systems either on particular systems of interest (for instance, \cite{breiten2014riccati}, see also \cite{casas2017stabilization}) or when $A$ is parabolic (see Definition \ref{defn:par-op}) \cite{barbu2003feedback,barbu2018controllability}. One can also cite the Gramian approach (see, for instance, \cite{urquiza2005rapid}) and in particular the recent result of \cite{nguyen2024stabilization} where the author managed to modify the Gramian approach to get a quantitative rapid stabilization and even a finite-time stabilization for a semilinear system (i.e. $\mathcal{F}(H)\subset H$) when $A$ is skew-adjoint (see \cite{nguyen2024rapid} on the application to the 1D Schrödinger equation). Nevertheless, in these approaches, it often happens that the control feedback laws are not explicit, 
%or practical 
as they rely either on solving a minimization problem and an algebraic Riccati equation \cite{komornik1997rapid} (or even a Hamilton-Jacobi-Bellman equation
% which is hard to solve in practice 
%VOIR SI NUMERIQUEMENT LA F EQUIVALENCE POURRAIT APPORTER QUELQUE CHOSE PAR RAPPORT A CES ALGORITHMES.
\cite{bellman1952theory}, \textcolor{black}{see also \cite{kunisch2023learning} for a learning approach to alleviate this difficulty}) or because they rely on the knowledge of the semigroup $e^{A^{*}t}$.\\
%
%Using approaches such as XXX and XXX these results can be extended to rapid stabilization and $B$ unbounded. 
%Citer Gramian as well
%Hamilton Jacobi equations for semilinear systems
%Intersting the link with controlability?

Other methods have been introduced, specifically for the parabolic cases. For instance, the impressive Frequency Lyapunov method which allows to obtain a quantitative rapid stabilization and even a finite-time stabilization for the multidimensional heat equation \cite{xiang2024quantitative} (and later for the 2D Navier-Stokes equation \cite{xiang2023small}) by relying on Carleman estimates and a specific Lyapunov function. However, it requires the control to be distributed, that is $K$ takes values in $U=H$. For this most challenging case, where $B$ is unbounded and the control belongs to a finite-dimensional space (that is $K$ takes values in $U=\mathbb{R}^{k}$ for some $k\in\mathbb{N}\setminus\{0\}$) it is worth highlighting, still in the parabolic framework, the work of \cite{badra2014fattorini} where the authors managed to deal both with a wide class of linear parabolic systems and, notably, with some nonlinear systems where the nonlinear perturbation $\mathcal{F}$ is not semilinear (i.e. $\mathcal{F}(H)$ is not contained in $H$) as long as the system is approximately controllable \cite{badra2011stabilization}.\\

%Checker si B borné, si oui go for it. / Citer Badra Takahshi. 

Another method was introduced to tackle the aforementioned limitations and deal with this most challenging case in a general framework: the \emph{$F$-equivalence} (for \emph{feedback equivalence}). \textcolor{black}{This name was in fact first introduced by Brunovsky in \cite{brunovsky1970classification} for linear finite-dimensional system.}
%in general: the \emph{$F$-equivalence}.   when $B$ is unbounded and the control belongs to a finite-dimensional space, that is $K$ \textcolor{red}{takes values} in $\mathbb{R}^{k}$ for some $k\in\mathbb{N}$. 
 The principle is simple: instead of trying directly to find a feedback $K$, this method solves a different mathematical problem:
\begin{problem}
\label{pb:2}
Given operators $(A, B)$ and a target operator $\widetilde{A}$, find $(T,K)\in \mathcal{L}(H)\times \mathcal{L}(D(A);U)$ such that $T$ is an isomorphism from $H$ into itself and maps (in $H$) the system
\begin{equation}
\label{eq:introor}
\partial_{t}u(t) =Au(t)+BKu(t)
\end{equation}
to the system
\begin{equation}
\label{eq:introtar}
\partial_{t}u(t) =\widetilde{A}u(t).
\end{equation}
\end{problem}

Of course, if $\widetilde{A}$ generates an exponentially stable semigroup on $H$, the existence of such a pair implies the exponential stability of the original system \eqref{eq:introor} in $H$ (see Proposition \ref{prop:sg}). This approach is sometimes called \emph{generalized backstepping} or \emph{Fredholm backstepping} for, when $T$ is a Volterra transform of the second kind, it coincides with the well investigated \emph{backstepping} method for 1D systems \cite{BaloghKrstic,KrsticBook,hu2015control} (see also \cite{xiang2019null} or \cite{cauvin2023boundary} and references therein).
 %or \cite{} for 1D parabolic systems). 
 In the last ten years, $F$-equivalence approaches have been used to achieve rapid stabilization of many systems, first for particular systems \cite{Coron2014-local,CoronLu15,coron2017finite,Coron2018-rapid,deutscher2019fredholm,gagnon2021fredholm,xiang2022fredholm,Coron2022-stabilization,lissy2023rapid} and recently in increasingly general settings \cite{xiang2024fredholm,hayat2024fredholm}.\\

By definition, the $F$-equivalence problem is \emph{a priori} asking for more than the rapid stabilization, which is only its consequence. 
%of solving the $F$-equivalence problem with an exponentially stable $\tilde{A}$.
 However, it actually turns out that for many systems the sufficient conditions of existence of an $F$-equivalence are relatively permissive and for skew-adjoint systems they were shown to be even better than the usual known sufficient conditions for rapid stabilization \cite[Section 3.1]{hayat2024fredholm}. 
% This likely comes from the fact that the $F$-equivalence allows to look at the problem directly as a stabilization problem than deriving a feedback from the resolution of the (optimal) control problem and hence avoid restrictions that are only  made for the optimal control problem to make sense.
%which lifts several conditions on $B$ and $K$.
\textcolor{black}{This can be explained since the $F$-equivalence allows us to look at the problem directly as a stabilization problem rather than deriving a feedback from the resolution of the (optimal) control problem. This avoids usual admissibility conditions on $B$ (see, for instance, \cite[Section 2.3]{coron2007control} or \cite{tucsnak2009observation}) which usually ensures that the system \eqref{eq:sys0} is well-posed for 
%any $w\in L^{2}(0,T; U)$ 
a whole class of controls,
which is not needed for the stabilization problem (one only needs the system to be well-posed along $w = Ku$). By considering the equivalence with the simpler system \eqref{eq:introtar}, it also avoids additional regularity conditions on $K$ as long as \eqref{eq:sys0} is well-posed. This, together with the explicitness of the feedback constructed makes the $F$-equivalence interesting both as a problem and for its application to the rapid stabilization.
} 
%\textcolor{magenta}{Résumer ça et expliquer un peu plus pourquoi c'est si intéressant cette méthode / prometteuse}
 
%$F$-equivalence has several advantages 
However, when it comes to parabolic systems, the existing $F$-equivalence conditions are likely too conservative compared to usual condition of rapid stabilization: they are stronger than asking for the exact null controllability of $(A,B)$ \textcolor{black}{as in \cite{trelat2019characterization,Datko1971}, and hence stronger than approximate controllability as in \cite{badra2014fattorini}.}
%Nevertheless,  the $F$ equivalence is asking too much / conditions are too conservative.

%requires a shift of the whole spectrum which, for parabolic system seems unoptimal.
%
%This comes from the fact that the target system $\widetilde{A}$ is often
Another limit is that all the existing results of $F$-equivalence assume uniformly bounded multiplicities of the eigenvalues of $A$. For skew-adjoint systems this condition is necessary as soon as there is a finite number of controls (i.e. $K$ takes values in $U=\mathbb{R}^{k}$). For parabolic systems, however, this is likely too conservative as well. While, strictly speaking, this does not restrict this approach to 1D systems, it is still a strong limitation in practice when looking at systems that are multidimensional in space.\\
%While this does not strictly speaking restrict the method to 1D systems, it is still a strong limitation in practice when looking at multidimensional systems. 
%While this is condition on the multiplicity is necessary when looking at skew-adjoint systems, it is too strong for parabolic systems. In this paper, we tackle these two limitations and we show the following (see Theorem \ref{} for a more detailed version)

In this paper, 
%we are interested in solving Problem \ref{pb:2} for parabolic systems and tackling 
we tackle these two limitations, hence from now on we have $U=\R^k$ with some $k\in \N \setminus \{0\}$. We show the following result (see Theorem \ref{thm:main} for a more detailed version):
\begin{thm}
\label{th:th01}
Let $A$ be a parabolic (unbounded) operator on a Hilbert space $H$ with a Riesz basis of eigenvectors (see Definition \ref{defn:par-op}) and $B\in (\textcolor{black}{D(A^{*})'})^{k}$. For all $\lambda\in \R_{>0}$, there exists an explicitly computable $m(\lambda)$ (depending only on $A$) such that either
\begin{itemize}
\item $k< m(\lambda)$ and there is no exponentially stable operator $\widetilde{A}$ such that there exists a solution to the $F$-equivalence problem \ref{pb:2}.
\item $k \geq m(\lambda)$, then if $B$ satisfies the $\lambda$-approximate controllability condition \hyperref[hyp:B]{$(H_{B})$},  
%\textcolor{red}{(also called $F_\lambda$-admissibility)}, 
there exists an explicit $\widetilde{A}$, $T$ and $K\in\mathcal{L}(H;\mathbb{R}^{k})$ such that $\widetilde{A}$ is exponentially stable with decay rate $\lambda$ and $(T,K)$ are solutions of the $F$-equivalence problem \ref{pb:2}.
%and $B\in (D(A)')^{m(\lambda)}$
%for almost every $\lambda\in\mathbb{R}$ 
\end{itemize}
\end{thm}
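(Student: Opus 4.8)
Let me fix a Riesz basis of eigenvectors $(e_n)_n$ of $A$, with $A e_n = \nu_n e_n$, together with the biorthogonal family $(e_n^*)_n$, which is a Riesz basis of eigenvectors of $A^*$ satisfying $A^* e_n^* = \overline{\nu_n}\,e_n^*$; write $Bw=\sum_{j=1}^k w_j b^j$, so that $B^*\psi=(\langle b^1,\psi\rangle,\dots,\langle b^k,\psi\rangle)$ for $\psi\in D(A^*)$. Setting $v=Tu$, the transform $T$ maps \eqref{eq:introor} to \eqref{eq:introtar} precisely when the intertwining identity
\[
  \widetilde{A}\,T = TA + TBK
\]
holds on $D(A)$. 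The whole problem is thus to solve this operator equation with $T$ an isomorphism of $H$ and $\widetilde A$ exponentially stable with rate $\lambda$. Since $A$ is parabolic, $\Re\nu_n\to-\infty$, so the \emph{bad set} $\mathcal B:=\{\,n:\Re\nu_n>-\lambda\,\}$ is finite. I take $\ml$ to be the largest geometric multiplicity occurring among the eigenvalues $\nu_n$, $n\in\mathcal B$; this is read off directly from the spectrum and depends only on $A$ and $\lambda$.

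\textbf{Necessity of $k\ge\ml$.} Suppose $k<\ml$. Then some bad eigenvalue $\nu$ (so $\Re\nu>-\lambda$) has $A^*$-eigenspace $E:=\ker(\overline{\nu}-A^*)$ of dimension strictly larger than $k$, so the linear map $E\to\C^k$, $\psi\mapsto B^*\psi$, has a nontrivial kernel: there is $\psi\ne0$ in $E$ with $B^*\psi=0$. For \emph{any} feedback $K\in\L(D(A);\R^k)$ we then get $(A+BK)^*\psi=A^*\psi+K^*B^*\psi=\overline{\nu}\,\psi$, hence $\nu\in\sigma(A+BK)$. If a pair $(T,K)$ solved Problem \ref{pb:2} for some target $\widetilde A$, then $\widetilde A=T(A+BK)T^{-1}$ would be similar to $A+BK$, so $\nu\in\sigma(\widetilde A)$ with $\Re\nu>-\lambda$, contradicting exponential stability at rate $\lambda$. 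This settles the first alternative.

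\textbf{Construction when $k\ge\ml$ and $(H_B)$ holds.} Keep $\tilde\nu_n=\nu_n$ for $n\notin\mathcal B$ and choose, for $n\in\mathcal B$, pairwise distinct target eigenvalues $\tilde\nu_n\in\{\Re<-\lambda\}\setminus\sigma(A)$. The condition $(H_B)$ is exactly the Fattorini–Hautus condition $B^*\psi\ne0$ for every $A^*$-eigenvector with bad eigenvalue, and $k\ge\ml$ dominates every bad multiplicity; together they make the finite-dimensional compression $(A_u,B_u)$ of $(A,B)$ to $H_u:=\vect\{e_n:n\in\mathcal B\}$ controllable. Pole placement yields an explicit $K_u$, and I set $K:=K_uP_u$ with $P_u$ the spectral projection onto $H_u$: a bounded finite-rank feedback fixing every good eigenvector, $(A+BK)e_m=\nu_m e_m$ for $m\notin\mathcal B$. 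Expanding the eigenproblem $(A+BK)v=zv$ in the basis $(e_n)$ shows that the closed-loop eigenvalues off $\sigma(A)$ are the zeros of a $k\times k$ determinant $\det(I-M(z))$, which controllability lets me place exactly at the chosen $\tilde\nu_n$; hence $\sigma(A+BK)=\{\nu_m:m\notin\mathcal B\}\cup\{\tilde\nu_n:n\in\mathcal B\}\subset\{\Re\le-\lambda\}$. For $n\in\mathcal B$ the corresponding eigenvector has the explicit partial-fraction form $f_n=\sum_m \frac{-\sum_j p^j_n\,b^j_m}{\nu_m-\tilde\nu_n}\,e_m$, and the parabolic gap $|\nu_m-\tilde\nu_n|\to\infty$ makes these coefficients square-summable, so $f_n\in H$; with $f_m:=e_m$ for $m\notin\mathcal B$, I define $T$ by $Tf_n=e_n$ and $\widetilde A$ as the Riesz-spectral operator $\widetilde A e_n=\tilde\nu_n e_n$. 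Then $\widetilde A T=T(A+BK)$ holds by construction, and $\widetilde A$ is exponentially stable with rate $\lambda$.

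\textbf{The main obstacle.} The crux is to prove that $(f_n)_n$ is a Riesz basis of $H$, equivalently that the explicit operator $T$ is an isomorphism. Boundedness of $T$ and $T^{-1}$ on the good modes is immediate, and since only $|\mathcal B|$ vectors are altered — each by a square-summable tail controlled by the parabolic gap — $T$ is a finite-rank perturbation of the identity, hence Fredholm of index $0$; so invertibility reduces to injectivity, i.e.\ to the non-vanishing of a finite Weinstein–Aronszajn determinant. I expect this determinant to be nonzero \emph{precisely} under $(H_B)$: a nontrivial kernel of $T$ is $(A+BK)$-invariant (from the intertwining relation $T(A+BK)u=\widetilde A Tu$), hence contains a closed-loop eigenvector, and unwinding this through the adjoint identity produces a bad $A^*$-eigenvector annihilated by $B^*$ — exactly what approximate controllability forbids, mirroring the necessity argument. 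This step, reconciling the finite-dimensional non-degeneracy with the infinite-dimensional Riesz-basis property, is where the real work lies. Once $T$ is an isomorphism, the intertwining identity furnishes the $F$-equivalence pair $(T,K)$, and Proposition \ref{prop:sg} transfers the exponential stability of $\widetilde A$ to the closed loop \eqref{eq:introor} at rate $\lambda$.
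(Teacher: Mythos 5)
Your architecture is sensible and genuinely different from the paper's: you drop the normalization $TB=B$ (which is legitimate, since Problem \ref{pb:2} does not require it), place the finitely many unstable poles by finite-dimensional pole placement, and try to build $T$ as the change of basis sending the closed-loop eigenvectors to the $(e_n)$. The paper instead imposes $TB=B$, derives from it the block-triangular structure of $T$ (Proposition \ref{prop:partial-uniqueness}), splits the system into $m(\lambda)$ sub-systems with simple low-frequency spectrum, and constructs the free high-frequency block $C$ as a diagonal operator. Your necessity argument for $k<m(\lambda)$ (a Hautus-type obstruction: an $A^*$-eigenvector of a high-multiplicity unstable eigenvalue annihilated by $B^*$ survives any feedback) is correct and is a welcome addition, since the paper's detailed Theorem \ref{thm:main} only treats the existence direction.

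However, the proposal has a genuine gap, and you name it yourself: the invertibility of $T$ (equivalently, that the closed-loop eigenvectors form a Riesz basis) is only ``expected'', not proved, and the heuristic you offer for closing it does not work as stated. A kernel element of your candidate $T^{-1}$ is a relation $\sum_{n\in\mathcal B}a_nf_n+\sum_{m\notin\mathcal B}a_me_m=0$; reducing this to a bad $A^*$-eigenvector killed by $B^*$ is not how the non-degeneracy actually arises, and $(H_B)$ by itself is not the mechanism at that stage (it was already spent on enabling pole placement). What actually saves your construction is that $P_{L}f_n$ is an eigenvector of the finite-dimensional closed-loop matrix $A_L+B_LK_u$, so the relevant $|\mathcal B|\times|\mathcal B|$ determinant is a change-of-basis determinant in $L_\lambda$, nonzero because the placed poles are distinct — but this observation is absent from your write-up, and without it the central step is unproven. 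It is worth noting that the paper faces the analogous difficulty in a harder form: because it insists on $TB=B$, the high-frequency diagonal entries of $T$ are forced to be $c_k=1-\sum_n b_nK_n/(\lambda_k-\lambda_n)$, which \emph{can} vanish; invertibility is recovered only for almost every shift $\mu$, using that the finite-dimensional feedback is polynomial in $\mu$ together with the isolated-zeros theorem (Proposition \ref{prop:dense-feq}). So the difficulty you flag is real and is exactly where the paper concentrates its effort; as submitted, your proof stops short of resolving it.
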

This implies in particular that the original system \eqref{eq:sys0} with $w(t)=Ku$ and $\mathcal{F}=0$ is well-posed (see Proposition \textcolor{black}{\ref{prop:sg}}). The assumption \hyperref[hyp:B]{$(H_{B})$} is central but also very natural for such a parabolic system. In particular it is weaker than the approximate controllability of $(A,B)$ and weaker than the usual conditions required for the $F$-equivalence (see the discussion below Theorem \ref{th:th02}).\\
%and an immediate consequence on the rapid stabilization of the system is the following

A consequence of this theorem is the exponential stability of the nonlinear system \eqref{eq:sys0}.
\textcolor{black}{
From now on we will work with a diagonal parabolic operator $A$ on $H$ (see Definition \ref{defn:par-op}). Let $s\in \R$, we set $\|x\|_{D_{s}(A)} = \|(-A+\delta)^s x\|_H$, and $D_{s}(A)$ is the Hilbert space associated to this norm, completed if $s<0$ (with $\delta\geq 0$ such that $-A+\delta$ is invertible). Here we state the assumption that the nonlinearity $\mathcal{F}$ must satisfy.
\begin{hyp}
\label{item:F1} We set $\gamma = \min(1-s,1/2)$ where $s$ is such that $B\in (D_{-s}(A))^k$. The mapping $\mathcal{F}$ goes from $D_{\gamma}(A)$ to $D_{-1/2}(A)$, such that there exists $\eta , K>0$ and $\Phi : \R_{\geq 0} \rightarrow \R_{\geq 0}$ non-decreasing continuous at 0 with $\Phi(0)=0$, which satisfies the following conditions: for all $u,v\in D_{\gamma}(A)$ with $\|u\|_{H}, \|v\|_H \leq \eta$, we have
\begin{align}
\label{eq:FH1}
\|\mathcal{F}(u)\|_{D_{-1/2}(A)}
    &\le \Phi(\|u\|_{H}) \,\|u\|_{D_{\gamma}(A)}, \\
\label{eq:FH2}
\|\mathcal{F}(u) - \mathcal{F}(v)\|_{D_{-1/2}(A)}
    &\le K(\|u\|_{D_{\gamma}(A)} + \|v\|_{D_{\gamma}(A)})\,\|u-v\|_{H} \\
    &\quad + K\Phi(\|u\|_{H} + \|v\|_{H})\,\|u-v\|_{D_{\gamma}(A)} \notag.
\end{align}
\end{hyp}
One can look at Remarks \ref{rem:rmkF1}--\ref{rem:NL-sufficient} for a discussion on this assumption.
With this assumption we can prove the following exponential stability result (see Theorem \ref{thm:nonlinear-stab} and the associated remarks for further details).}

\begin{thm}
\label{th:th02}
Let $A$ be a parabolic (unbounded) operator on a Hilbert space $H$ with a Riesz basis of eigenvectors and let $B\in (D_{-s}(A))^{k}$ (with $s \in [0,1]$). For any $\lambda>0$, if 
\begin{itemize}
\item $B$ \textcolor{black}{satisfies the $\lambda$-approximate controllability condition \hyperref[hyp:B]{$(H_{B})$}}
%\textcolor{red}{be $F_\lambda$-admissible}. 
 \item $\mathcal{F}$ satisfies Assumption \ref{item:F1}
 \end{itemize}
then there exists an explicit $K\in \mathcal{L}(H,\mathbb{R}^{k})$ such that the system \eqref{eq:sys0} is exponentially stable with $w(t)=Ku$.
% \textcolor{red}{XXXExponential Stability NonlinearXXX}
\end{thm}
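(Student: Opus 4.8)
The plan is to reduce the nonlinear statement to the linear $F$-equivalence already established in Theorem~\ref{th:th01}, and then to treat $\mathcal F$ perturbatively via a Duhamel/fixed-point argument for the semilinear closed loop. Fix $\lambda>0$ and apply Theorem~\ref{th:th01}: since $B$ satisfies \hyperref[hyp:B]{$(H_{B})$}, we obtain an explicit $K\in\mathcal L(H,\R^k)$, an isomorphism $T$ of $H$, and an exponentially stable target $\widetilde A$ of decay rate $\lambda$ with $T(A+BK)=\widetilde A T$ on the relevant domains. Writing $A_K:=A+BK$ for the closed-loop generator, the identity $e^{A_K t}=T^{-1}e^{\widetilde A t}T$ yields $\|e^{A_K t}\|_{\mathcal L(H)}\le \|T\|\,\|T^{-1}\|\,e^{-\lambda t}$, and by Proposition~\ref{prop:sg} the closed loop \eqref{eq:sys0} with $w=Ku$ and $\mathcal F=0$ is well posed. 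The task is then to upgrade this linear exponential decay to the nonlinear system $\partial_t u = A_K u+\mathcal F(u)$.

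First I would establish the smoothing properties of the closed loop on the fractional scale $D_\theta(A)$. Since $B\in(D_{-s}(A))^k$ and $K\in\mathcal L(H,\R^k)$, the perturbation $BK$ is bounded from $H$ into $D_{-s}(A)$ with $s\le 1$, hence of lower order relative to $A$ on the extrapolated scale; standard perturbation theory for sectorial operators then gives that $A_K$ is again sectorial and that its fractional domains coincide with $D_\theta(A)$ on the range of $\theta$ relevant below. Combining sectoriality with the exponential decay coming from the similarity to $\widetilde A$, I would record, for some $0<\lambda'<\lambda$, the analytic-semigroup estimates
\[
\|e^{A_K t}\|_{\mathcal L(D_{-1/2}(A),D_\gamma(A))}\le C\,t^{-(\gamma+\frac12)}e^{-\lambda' t},
\qquad
\|e^{A_K t}\|_{\mathcal L(D_{-1/2}(A),H)}\le C\,t^{-\frac12}e^{-\lambda' t},
\]
together with $\|e^{A_K t}\|_{\mathcal L(D_\gamma(A))}\le Ce^{-\lambda' t}$.

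Next I would set up the fixed point for the mild formulation $u(t)=e^{A_K t}u_0+\int_0^t e^{A_K(t-\tau)}\mathcal F(u(\tau))\,\mathrm d\tau$ in a weighted space of Henry type controlling both $e^{\lambda' t}\|u(t)\|_H$ and $e^{\lambda' t}\|u(t)\|_{D_\gamma(A)}$ (allowing an admissible $t^{-\gamma}$ singularity at $t=0$ when $u_0$ is only small in $H$), restricted to a small ball so that $\|u\|_H\le\eta$ as in Assumption~\ref{item:F1}. On such a ball the coefficient $\Phi(\|u\|_H)$ in \eqref{eq:FH1}--\eqref{eq:FH2} is small; the two-term structure of \eqref{eq:FH2}, one term controlled by $\|u-v\|_H$ against the integrable kernel $t^{-1/2}$ and the other carrying the small factor $\Phi$ against the critical kernel $t^{-(\gamma+1/2)}$, is exactly what lets me close a contraction and propagate the weight $e^{\lambda' t}$. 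The fixed point then provides, for $u_0$ small, a unique global solution with $\|u(t)\|_H\le Ce^{-\lambda' t}\|u_0\|_{D_\gamma(A)}$, i.e. local exponential stability of $u^\ast=0$.

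The main obstacle I anticipate is the borderline integrability of the smoothing kernel: since $\gamma=\min(1-s,1/2)$, the exponent $\gamma+\tfrac12$ equals $1$ precisely when $s\le 1/2$ (e.g. the bounded-control case), so the first estimate above is only just short of being integrable at $\tau=t$. Closing the argument there relies crucially on the precise form of Assumption~\ref{item:F1}: measuring the contraction in the $H$-norm, where the kernel exponent is the harmless $1/2$, for the ``large'' part of $\mathcal F$, and isolating the $D_\gamma$-regularity into the term carrying the vanishing factor $\Phi(\|u\|_H)$, which absorbs the critical singularity on a sufficiently small ball. Verifying that $A_K$ genuinely retains the analytic smoothing on the $D_\theta(A)$ scale, and not merely exponential decay in $H$, under the unbounded perturbation $BK$ is the other technical point to be handled with care.
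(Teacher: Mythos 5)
Your reduction to the linear problem is the same as the paper's: apply Theorem \ref{thm:main} to get $(T,K)$, use the similarity $e^{(A+BK)t}=T^{-1}e^{Dt}T$ for the $H$-decay, and then treat $\mathcal F$ as a perturbation of the stable closed loop. The divergence, and the gap, is in how you close the nonlinear estimate. Assumption \ref{item:F1} allows $\mathcal F$ to be of \emph{full order}: it maps $D_{1/2}(A)$ to $D_{-1/2}(A)$ (the case $\gamma=1/2$, which occurs for every $s\le 1/2$, including bounded $B$). In your Henry-type scheme the Duhamel term is estimated in $D_\gamma(A)$ pointwise in time against the kernel $(t-\tau)^{-(\gamma+\frac12)}$, which at $\gamma=1/2$ is $(t-\tau)^{-1}$ and is \emph{not} integrable at $\tau=t$. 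Your proposed fix --- putting the $D_\gamma$-regularity of $u-v$ only in the term carrying the small factor $\Phi(\|u\|_H+\|v\|_H)$ --- does not help: a small constant multiplying $\int_0^t(t-\tau)^{-1}\,\mathrm d\tau$ is still infinite. No choice of ball radius closes a sup-in-time contraction at the critical exponent. This is precisely why the paper abandons the weighted sup-norm framework and instead works in $X=C^0_b([0,+\infty);H)\cap L^2((0,+\infty);D_{1/2}(A))$ with an $L^2$-in-time maximal regularity estimate (Lemma \ref{lem:inhom-cauchy}, Lions-type): for $g\in L^2(D_{-1/2})$ the solution map lands in $L^2(D_{1/2})\cap C^0_b(H)$, proved mode by mode via Young's inequality $L^1\ast L^2\to L^2$, and \eqref{eq:FH1}--\eqref{eq:FH2} are tailored so that $\|\mathcal F(u)\|_{L^2(D_{-1/2})}\le\Phi(\kappa)\|u\|_{L^2(D_{1/2})}$ closes the contraction. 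The exponential decay at the full rate $\lambda$ is then obtained by an energy/Gronwall argument, not from the semigroup bound (which is also why the paper gets rate $\lambda$ where you only get $\lambda'<\lambda$).

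A second, lesser issue: you invoke ``standard perturbation theory for sectorial operators'' to conclude that $A+BK$ is sectorial with fractional domains $D_\theta(A)$. For $B\in(D_{-1}(A))^{k}$ the perturbation $BK$ is of the same order as $A$ and relative-boundedness arguments do not apply. The paper instead transports everything through the similarity: it proves (Lemma \ref{lem:iso-scale}, using the explicit form of $\tau$ from Proposition \ref{prop:partial-uniqueness}) that $T\in\mathcal{GL}(D_r(A))$ for all $r\in[-1,1-s]$, which gives $D_\gamma(A+BK)=D_\gamma(A)$ with equivalent norms and lets one read $A+BK$, in the norm $\|T^{-1}\cdot\|_H$, as a genuine diagonal parabolic operator to which the appendix applies directly. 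You should replace the perturbation-theoretic claim by this similarity argument, and you will still need the quantitative mapping property of $T$ on the fractional scale, which your proposal asserts but does not prove.
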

%In this statement $H^{\beta}$ is given in \eqref{}
%Novelty compared to Badra Takahashi

As intended, the $\lambda$-approximate controllability assumption \hyperref[hyp:B]{$(H_{B})$} is much less restrictive than the $F$-equivalence conditions in a general setting given by \cite{hayat2024fredholm}. \textcolor{black}{This either significantly improves or recovers the recent results of \cite{CoronLu15,gagnon2021fredholm, xiang2022fredholm,lissy2023rapid, hayat2024fredholm}.}
%This is particularly striking in the case of the 1d heat equation and Burgers' equation studied in \cite{,} where the $F$-equivalence condition amounts to 
As an illustration, in the case of the $1d$ heat equation and Burgers' equation on a torus studied in \cite{xiang2022fredholm,hayat2024fredholm} with $B=(B_{1},B_{2})$ where $B_{1}: \;\textcolor{black}{x\mapsto}  \sum_{n \geq 1} b_n^1 \sin(n x)$ is odd and $B_{2}\;\textcolor{black}{x\mapsto} \sum_{n \geq 0} b_n^2 \cos(n x)$ is even, the $F$-equivalence condition of \cite{gagnon2021fredholm,hayat2024fredholm} amounts to 
%\todo[author=Amaury]{Je me demande si on ne peut pas faire un tout petit peu mieux avec l'article avec Epiphane}
\begin{equation}
%n^{} \leq |\langle B_{i},\sin(nx)\rangle|\leq n^{},\text{ for some }
b_0^2 \not = 0 \ \text{and} \  \exists \gamma \in [0,1/2), \ \forall j \in \{1,2\}, \ \forall n \geq 1, \  c \leq |b_n^j| \leq Cn^\gamma,
\end{equation} 
where $c$ and $C$ are positive constants and \textcolor{black}{our} $\lambda$-approximate controllability assumption \hyperref[hyp:B]{$(H_{B})$} amounts to the \textcolor{black}{(much)} less restrictive condition
\begin{equation}
%\langle B_{1},\sin(nx)\rangle \neq 0,\; \langle B_{2},\cos(nx)\rangle \neq 0, \;\;\textcolor{magenta}{\forall n\in\mathbb{N}\setminus\{0\}.}
\forall j \in \{1,2\}, \  \forall n \leq \sqrt{\lambda}, \ b_n^j \not = 0.
\end{equation}
In particular, if one wants to stabilize at any rate $\lambda$, the previous conditions amount to the approximate controllability of $(A,B)$ by the generalized Fattorini criterion (\textcolor{black}{see \cite{badra2014fattorini} and} Lemma \ref{lem:Fattorini}). 
The example of the Kuramoto-Sivashinsky system of \cite{CoronLu15} is discussed in Section \ref{subsec:KS-eq}. \\ %\textcolor{black}{It is also worth noting in \eqref{eq:FH1}--\eqref{eq:FH2} that $\Phi$ is not necessarily Lipschitz which allows to derive examples as the one in Section \ref{subsec:quasilinear-heat}.} \\

%Give some detail to show what the conditions are so that it speaks to everybody.
In Theorem \ref{th:th02} there is also no requirement on the multiplicity of the eigenvalues of $A$, which makes it suitable, for instance, \textcolor{black}{for multidimensional systems in space, in contrast to essentially all the previous $F$-equivalence approaches \cite{Coron2014-local,CoronLu15,gagnon2021fredholm,lissy2023rapid,xiang2022fredholm,xiang2024fredholm,hayat2024fredholm}}.\\

While our main goal of this work is to improve the conditions for the $F$-equivalence problem for parabolic systems and extend them to multidimensional systems, one can note that the rapid stabilization result of Theorem \ref{th:th02} (see Theorem \ref{thm:nonlinear-stab} for a more detailed version) can still be compared to previous results for parabolic systems that use different approaches. In particular,
compared to \cite{raymond2019stabilizability}, the system is not necessarily linear and,
to \cite{badra2014fattorini,badra2020local}, the conditions on the nonlinearity $\mathcal{F}$ are similar although less restrictive on two points:
\textcolor{black}{
\begin{itemize}
    \item[--] First, the function $\Phi$ controlling the local Lipschitz behaviour of $\mathcal{F}$ is only required to be non-decreasing and continuous at zero with $\Phi(0)=0$, rather than locally Lipschitz as in \cite{badra2014fattorini}. This relaxation is essential for handling reaction terms $f \in C^{1,1}_{\mathrm{loc}}$ as in Subsection~\ref{subsec:quasilinear-heat}.
    \item[--] Second, and more fundamentally, our conditions on $\mathcal{F}$ are expressed directly with respect to the open-loop operator $A$, whereas the usual perturbative approaches express them with respect to the closed-loop operator $A+BK$ and therefore depend implicitly on the feedback $K$ being constructed. This is a genuine consequence of the $F$-equivalence: Lemma~\ref{lem:iso-scale} shows that the transformation $T$ is an isomorphism of $D_r(A)$ for every $r \in [-1, 1-s]$, which yields $D_\gamma(A+BK) = D_\gamma(A)$ with equivalent norms. We emphasise that this is not a cosmetic improvement: as already noted in \cite{badra2014fattorini}, the main difficulty to apply such a result in concrete examples is that one has to identify the spaces $H_{F,\alpha}$ for $\alpha \in [0,1]$, a difficulty that disappears in our framework. 
\end{itemize}}
\textcolor{black}{The two points are connected, the relaxation on $\Phi$ would be of limited practical value if the resulting condition still had to be checked on the implicitly-defined space $D_\gamma(A+BK)$.}
Note that compared to \cite{badra2014fattorini}, and similarly to \cite{badra2011stabilization}, $B$ can belong to $(D_{-1}(A))^{k}$ and does not have to belong to the smaller space $D_{-s}(A)^{k}$ for some $s\in [0,1)$. %\\ %\todo[author=Vincent]{J'imagine que $D(A')$ c'est $D(A^*)$, il faudra qu'on rajoute la source.}\todo[author=Amaury]{En fait je voulais dire $D(A)'$ mais en réalité peut-être plutôt $D(A^{*})'$, typo de ma part} 
%\todo[author=Amaury]{D'ailleurs, il faut peut-être qu'on insiste en dessous sur le fait que $D(A^{*})'=D(A)'$ pour que les gens ne trouvent pas cela étrange.}\\

In Section \ref{subsec:app} we illustrate this result on several examples of applications. Among others, \textcolor{black}{we study the} rapid stabilization of \textcolor{black}{a heat equation on a Riemannian manifold,} the (nonlinear) Kuramoto-Sivashinsky equation, the Navier-Stokes equations and a quasilinear heat equation. 

Overall, the paper is organized as follows: in Section \ref{sec:setting} we introduce our setting, notations and useful propositions. In Section \ref{sec:main} we state our main results, i.e. Theorems \ref{thm:main} and \ref{thm:nonlinear-stab} that are shown in Section \ref{sec:MainProof}, and in Section \ref{subsec:app} we give examples of applications on concrete systems.
}

While revising this manuscript we were made aware of another preprint \cite{gagnon2025fredholm} treating the $F$-equivalence problem for parabolic systems in the particular case of linear systems with a similar method but relying on Cauchy matrices to derive the control. It would be an interesting question to know whether such an approach could also be applied to the general systems we consider in this paper.

\section{Settings and notations}
\label{sec:setting}
\subsection{Functional setting}

Let $(H, \scalp{\cdot}{\cdot}_H )$ be a Hilbert space \textcolor{black}{and $A$ be an unbounded operator on $H$ satisfying the following conditions:}
\begin{enumerate}[label=(A\arabic*)]
    \item \label{item:A1} There exists a family of eigenvectors $(e_{n})_{n \geq 1}$ of $A$ that is a Riesz basis of $H$.
    %\todo[author=Vincent]{J'ai changé base de Riesz en base de Hilbert, car beaucoup de preuves sont fait en considérant les $(e_n)$ comme une base de Hilbert, mais ca ne change rien aux résultats des théorèmes ! N'hésite pas à me dire ce que tu en penses.}
    Then for every $x \in H$, $Ax$ has a meaning \textcolor{black}{(not necessarily in $H$)} and we define the domain of $A$ as \[D(A)= \left\{ x \in H \ | \ Ax \in H \right\}.\]  
    \item \label{item:A2} The sequence $(\Re(\lambda_n))_{n \geq 1}$, \textcolor{black}{where $\lambda_{n}$ are the eigenvalues associated to $(e_{n})_{n\in\mathbb{N}^{*}}$}, is non-increasing and we have \[ \Re(\lambda_n) \underset{n \rightarrow + \infty}{\rightarrow} - \infty .\]
    \item \label{item:A3} There exists  $C > 0$ such that
    \begin{equation*}
        \forall n \geq 1, \ |\Re(\lambda_n)| \geq C |\Im(\lambda_n)|.
    \end{equation*}
\end{enumerate}

    \begin{remark}
In the following, and for simplicity, we replace the Riesz basis assumption in \ref{item:A1} by the assumption that $(e_n)_{n\geq 1}$ is an orthonormal Hilbert basis. This does not affect our results, since any Riesz basis becomes orthonormal under an equivalent inner product, the only distinction is discussed in Remark \ref{remark:RieszHilbert}.
\end{remark}

\begin{defn}\label{defn:par-op}
    Let $A$ be an unbounded operator on $H$. We will say that $A$ is diagonal parabolic if it satisfies \ref{item:A1}, \ref{item:A2} and \ref{item:A3}.
\end{defn}

In particular, any parabolic self-adjoint operator is a diagonal parabolic operator.

\begin{remark}
Hypothesis \ref{item:A2} implies that the multiplicity of each eigenvalue of $A$ is finite, but the supremum of all such multiplicities can be infinite. Hypothesis \ref{item:A2} is here to ensure that $A$ is the infinitesimal generator of an analytic semigroup.
\end{remark}

For $A$  diagonal parabolic operator, hypothesis \ref{item:A2} ensures that $\{ \Re(\lambda_n) \ | \ n \geq 1 \}$ has a maximum $m_A$. We define $c_A := \max(0, m_A)$, this constant will be useful for stating our main result. \textcolor{black}{Besides, from \ref{item:A1} there exists an inner product on $H$ such that $(e_{n})_{n \geq 1}$ is orthonormal and the associated norm is equivalent to the norm associated with $\scalp{\cdot}{\cdot}_H$ (see \cite[Section 2]{hayat2024fredholm}). Therefore in the following we will assume, without loss of generality, that $e_{n}$ is an orthonormal basis of $H$. Also, for every $x \in H$ we define $x_n$ to be $\scalp{x}{e_n}_H$}. \\

% \textcolor{red}{On pourrait mettre cette proposition en Appendix} In Appendix \ref{subsec:ppropdp}, we prove the following proposition, which will be useful later.
% \begin{prop}
% Let $A$ be a diagonal parabolic operator on $H$. Then $A$ is normal, and we have
% \[ \forall x \in D(A), \ A^* x = \sum_{n \geq 1} \overline{\lambda_n} x_n e_n. \]
% Furthermore, it is the infinitesimal generator of an analytic semigroup.
% \end{prop}

\textcolor{black}{Since $A$ is} closed, $D(A)$ is a Hilbert space endowed with the inner product \begin{equation}
    \forall y, z \in D(A), \ \scalp{y}{z}_{D(A)} := \scalp{y}{z}_H + \scalp{Ay}{Az}_H.
\end{equation}
Notice that \textcolor{black}{\((e_n/\sqrt{1 + |\lambda_n|^2})_{n \geq 1}\)} forms an orthonormal basis of \(D(A)\). We see that \((D(A), H, D(A)')\) is a Gelfand triple.

Finally, note that a parabolic diagonal operator \(A\) is normal, in particular we have $D(A)=D(A^*)$. This allows us to see it as an element of \(\mathcal{L}(H, D(A)')\).

Later, \textcolor{black}{it will be useful to consider} \(AM\) where \(M \in \mathcal{L}(D(A)')\), and we wish that \(AM \in \mathcal{L}(H, D(A)')\). However, this is not generally the case, so we need to work with another operator algebra. This is the reason for the following proposition.

\begin{prop}\label{prop:algebra}
     We define the following algebra
     \begin{equation}
        \mathcal{L}_H(D(A)') = \{ M \in \mathcal{L}(D(A)') \ | \ M_{|H} \in \mathcal{L}(H)\},
     \end{equation}
     \textcolor{black}{which,} endowed with the norm,
     \begin{equation}
         ||M||_{\mathcal{L}_H(D(A)')} := \underset{\substack{||x||^2_{D(A)'} + ||y||_H^2 = 1 \\ (x, y) \in D(A)' \times H}}{\textnormal{sup}} \sqrt{||Mx||^2_{D(A)'} + ||My||_H^2},
     \end{equation}
     \textcolor{black}{is} a Banach algebra. Furthermore, we have the embedding of \(\mathcal{L}_H(D(A)')\) in \(\mathcal{L}(D(A)' \times H)\)
     \begin{equation}
         \forall M \in \mathcal{L}_H(D(A)'), \ \forall (x, y) \in D(A)' \times H, \ \varphi(M)(x, y) := (Mx, My). 
     \end{equation}
\end{prop}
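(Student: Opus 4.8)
The plan is to recognize that the norm defining $\mathcal{L}_H(D(A)')$ is nothing but the operator norm pulled back along $\varphi$, and to deduce every claimed property from the corresponding classical property of the Banach algebra $\mathcal{L}(D(A)' \times H)$ (which is a Banach algebra since $D(A)' \times H$, equipped with the product norm $\sqrt{\|x\|_{D(A)'}^2 + \|y\|_H^2}$, is a Banach space). First I would check that $\varphi$ is well defined into $\mathcal{L}(D(A)'\times H)$: for $M \in \mathcal{L}_H(D(A)')$ and $(x,y) \in D(A)' \times H$ one has $Mx \in D(A)'$ since $M \in \mathcal{L}(D(A)')$, and $My \in H$ since $M_{|H} \in \mathcal{L}(H)$, so $\varphi(M)(x,y) = (Mx, My)$ lands in $D(A)' \times H$; linearity is immediate. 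The central computation is the isometry
\begin{equation*}
\|\varphi(M)\|_{\mathcal{L}(D(A)'\times H)} = \sup_{\|x\|_{D(A)'}^2 + \|y\|_H^2 = 1} \sqrt{\|Mx\|_{D(A)'}^2 + \|My\|_H^2} = \|M\|_{\mathcal{L}_H(D(A)')},
\end{equation*}
which in particular shows the proposed norm is finite. A short optimization under the constraint (bounding $\|Mx\|_{D(A)'}^2+\|My\|_H^2$ by $\|M\|_{\mathcal{L}(D(A)')}^2\|x\|_{D(A)'}^2 + \|M_{|H}\|_{\mathcal{L}(H)}^2\|y\|_H^2$ and taking $y=0$ resp. $x=0$ for the reverse bounds) also yields the convenient identity $\|M\|_{\mathcal{L}_H(D(A)')} = \max\!\big(\|M\|_{\mathcal{L}(D(A)')}, \|M_{|H}\|_{\mathcal{L}(H)}\big)$, which I would use repeatedly.

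Next I would establish the algebra structure. The set $\mathcal{L}_H(D(A)')$ is clearly a linear subspace, and it is stable under composition: if $M,N \in \mathcal{L}_H(D(A)')$ then $MN \in \mathcal{L}(D(A)')$ and, for $y \in H$, $Ny \in H$ and hence $M(Ny) \in H$, so $(MN)_{|H} = M_{|H}N_{|H} \in \mathcal{L}(H)$. A one-line computation gives $\varphi(MN) = \varphi(M)\varphi(N)$, and $\varphi$ is linear and injective (if $\varphi(M)=0$ then $Mx=0$ for all $x$, so $M=0$), hence $\varphi$ is an injective algebra homomorphism. Submultiplicativity of $\|\cdot\|_{\mathcal{L}_H(D(A)')}$ then follows either from that of the operator norm through $\varphi$, or directly from the $\max$-identity together with the elementary inequality $\max(ac,bd) \le \max(a,b)\max(c,d)$ for nonnegative reals; the remaining norm axioms transfer the same way.

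It remains to prove completeness, which I expect to be the only genuinely delicate point. Given a Cauchy sequence $(M_k)$ for $\|\cdot\|_{\mathcal{L}_H(D(A)')}$, the $\max$-identity shows that $(M_k)$ is Cauchy in $\mathcal{L}(D(A)')$ and that $((M_k)_{|H})$ is Cauchy in $\mathcal{L}(H)$; by completeness of these two classical spaces we obtain limits $M \in \mathcal{L}(D(A)')$ and $N \in \mathcal{L}(H)$. The crux is to show these two limits are compatible, namely that $M$ maps $H$ into $H$ with $M_{|H} = N$. For this I would fix $y \in H$ and use that $M_k y \to My$ in $D(A)'$ while $M_k y = (M_k)_{|H}y \to Ny$ in $H$; since the Gelfand triple $(D(A),H,D(A)')$ provides a continuous embedding $H \hookrightarrow D(A)'$, the latter convergence also holds in $D(A)'$, and uniqueness of limits in $D(A)'$ forces $My = Ny$ for every $y \in H$. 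Hence $M \in \mathcal{L}_H(D(A)')$ with $M_{|H}=N$, and the $\max$-identity gives $\|M_k - M\|_{\mathcal{L}_H(D(A)')} \to 0$. Equivalently, this argument shows that $\varphi(\mathcal{L}_H(D(A)'))$ is a closed subalgebra of $\mathcal{L}(D(A)'\times H)$, which is exactly the claimed embedding together with completeness. The main obstacle is solely this identification of the two limits, resolved by the continuity of the Gelfand embedding and uniqueness of limits.
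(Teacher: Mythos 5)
Your proof is correct and follows the same route as the paper: the key observation in both is that the norm on $\mathcal{L}_H(D(A)')$ is exactly the operator norm pulled back along the isometric algebra embedding $\varphi$ into $\mathcal{L}(D(A)'\times H)$. In fact your write-up is more complete than the paper's, which stops at the isometry and declares the Banach-algebra property; you correctly identify that completeness additionally requires the image of $\varphi$ to be closed, and you supply that argument via the identity $\|M\|_{\mathcal{L}_H(D(A)')}=\max\bigl(\|M\|_{\mathcal{L}(D(A)')},\|M_{|H}\|_{\mathcal{L}(H)}\bigr)$ together with the continuity of the embedding $H\hookrightarrow D(A)'$ to reconcile the two limits.
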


\begin{proof}
    It is clear that \(\mathcal{L}_H(D(A)')\) is an algebra and that \(\varphi\) is injective. Notice that the norm given above was designed such that 
    \[
    \forall M \in \mathcal{L}_H(D(A)'), \ ||\varphi(M)||_{\mathcal{L}(D(A)' \times H)} = ||M||_{\mathcal{L}_H(D(A)') }.
    \]
    Hence, \(\mathcal{L}_H(D(A)')\) is a Banach algebra.
\end{proof}

If we denote by \(\mathcal{GL}_H(D(A)')\) the group of invertible elements in \(\mathcal{L}_H(D(A)')\), then the above embedding gives us the following characterization:
\begin{equation}
    M \in \mathcal{GL}_H(D(A)') \iff (M, M_{|H}) \in \mathcal{GL}(D(A)') \times \mathcal{GL}(H).
\end{equation}
Notice that now, for every \(M \in \mathcal{L}_H(D(A)')\), we have \(AM \in \mathcal{L}(H, D(A)')\).

\subsection{Generalized Sobolev spaces}\label{subsec:sobolev}
To define the generalized Sobolev spaces, let us fix \(\delta \geq 0\) such that \(-A + \delta\) is invertible.\footnote{Note that it is sectorial since $A$ is a diagonal parabolic operator.} We denote here \textcolor{black}{and in the following} $-A+\delta$ for \( -A + \delta I \). Then we can define 
\begin{equation}\label{eq:space-scale-A}
    \forall s \in \mathbb{R}, \quad D_s(A) := D((-A + \delta)^s).
\end{equation}
Endowed with the usual graph norm, and after metric completion if $s<0$, these become Hilbert spaces. Let \(s \in \mathbb{R}\), one can show that the following norm is equivalent
\begin{equation}
    \forall x = \sum_{n \geq 1} x_n e_n \in D_s(A), \quad \|x\|_{D_s(A)}^2 := \sum_{n \geq 1} (1 + |\lambda_n|^2)^s |x_n|^2.
\end{equation}
We can identify \(D_{-s}(A)\) with \(D_s(A)^\prime\), and the triple \((D_{-s}(A), H, D_s(A))\) forms a Gelfand triple. Note that \(D_1(A) = D(A)\) \textcolor{black}{and $D_{0}(A)=H$}. 

We refer to this scale of spaces as generalized Sobolev spaces, as they share the same properties as classical Sobolev spaces. For instance, when \(A\) is a power of the Laplace operator on a closed manifold, these spaces coincide with the usual Sobolev spaces. For more details, see Subsection \ref{subsec:manifolds-settings} \textcolor{black}{and in particular \eqref{eq:scale-comparaison}}.

\subsection{Frequency decomposition} Let $\lambda > 0$. Here, we focus on defining a decomposition of our spaces into \textcolor{black}{l}ow and high frequencies. We define the low and high frequency spaces as
\begin{equation}
\label{eq:defLlambda}
\begin{aligned}
    L_\lambda &= \text{span}\left\{e_n \ \middle| \ \Re(\lambda_n) \geq -\lambda \right\}, \\
    H_\lambda &= \overline{\text{span} \left\{e_n \ \middle| \ \Re(\lambda_n) < -\lambda \right\}}^{H}, \\
    D(A)'_\lambda &= \overline{\text{span} \left\{e_n \ \middle| \ \Re(\lambda_n) < -\lambda \right\}}^{D(A)'}.
\end{aligned}
\end{equation}
We have the orthogonal decomposition \(H = L_\lambda \oplus H_\lambda\) and \(D(A)' = L_\lambda \oplus D(A)'_\lambda\). We set \(N(\lambda) := \dim L_\lambda < +\infty\) and define \(m(\lambda)\) to be the greatest multiplicity of any eigenvalue in \(L_\lambda\). \textcolor{black}{We denote by \(P_{L}\) and \(P_{H}\) the orthogonal projections on \(L_\lambda\) and \(H_\lambda\) in \(H\).}

\subsection{Control setting}
In this paper, we seek to stabilize $A$ using only a finite number of scalar controls, which means that our control system looks like
\[ \partial_t u = Au + Bw(t), \]
with $w(t) \in \C^m$ and $B$ a given control operator. Let $E$ be a normed vector space. We will use the canonical isomorphism from $E^m$ to $\mathcal{L}(\C^m, E)$ to identify an element $B \in \mathcal{L}(\C^m, E)$ with $(B_1, \dots, B_m) \in E^m$ in the following way
\begin{equation}\label{eq:canonical-iso}
    B : z \in \C^m \mapsto \sum_{j=1}^m z_j B_j \in E.
\end{equation}

%Before introducing our setting, we need the following lemma.

%\begin{lem}\label{lem:decomp}
%Let \(d \geq 1\) and let \((e_n^1)_{n \geq 1}, \dots, (e_n^d)_{n \geq 1}\) be a partition of \((e_n)_{n \geq 1}\). Define \(\mathcal{H}^j = \overline{\text{span}((e_n^j)_{n \geq 1})}^{H}\) thus \(H = \bigoplus_{j=1}^d \mathcal{H}^j\). Then for each \(j \in \{1, \dots, d\}\), \(A\) induces a diagonal parabolic operator on \(\mathcal{H}^j\) such that 
%\[ 
%A = A_1 + \dots + A_d, \quad D(A) = \bigoplus_{j=1}^d D(A_j), \quad D(A)' = \bigoplus_{j=1}^d D(A_j)'. 
%\]
%\end{lem}
%\begin{proof}
%    Let \(j \in \{1, \dots, d\}\). For all \(n \geq 1\), define \(\lambda_n^j\) as the eigenvalues associated with \(e_n^j\). Then we set \(D(A_j) = \{ x \in \mathcal{H}^j \, | \, \sum_{n \geq 1} |x^j_n|^2 |\lambda^j_n|^2 < +\infty \}\), which allows us to define \(A_j\) as 
%    \[ 
%    \forall x \in D(A_j), \ A_j x = \sum_{n \geq 1} x_n^j \lambda_n^j e_n^j. 
%    \]
%    Then \(A_j\) satisfies \ref{item:A1}. Notice that since \((e_n^j)_{n \geq 1}\) is a subsequence of \((e_n)_{n \geq 1}\), this implies that \(A\) also satisfies \ref{item:A2}. Therefore, we have \(A = A_1 + \dots + A_d\) and \(D(A) = \bigoplus_{j=1}^d D(A_j)\). Taking the dual of the orthogonal sum finally gives the last equality.
%\end{proof}

%In our framework, we work with \(B \in (D(A)')^m\) for some fixed \(m \geq 1\). 
\textcolor{black}{Let \(B \in (D(A)')^m\) for some fixed \(m \geq 1\),} when the supremum of the multiplicities of \(A\) is infinite, the pair \((A, B)\) is not approximately controllable, making it impossible to achieve stabilization at any desired rate. Instead, we set a target stabilization rate \(\lambda > 0\) and seek to determine whether we can stabilize the system at this rate. \textcolor{black}{Note that, for any given $\lambda>0$, we have the following lemma thanks to Appendix G of \cite{xiang2024fredholm}.}

\begin{lem}
\label{lem:decomp1}
There exist $N_1(\lambda), \dots, N_{\ml}(\lambda) \in \N^*$ and a partition $(e_n^1)_{n \geq 1}, \dots, (e_n^{\ml})_{n \geq 1}$ of $(e_n)_{n \geq 1}$ such that, if we set $L_\lambda^j = \text{span}((e_n^j)_{1 \leq n \leq N_j(\lambda)})$ \textcolor{black}{and  \(\mathcal{H}^j = \overline{\text{span}((e_n^j)_{n \geq 1})}^{H}\) (thus \(H = \bigoplus_{j=1}^{m(\lambda)} \mathcal{H}^j\)}) , then the multiplicities of eigenvalues are simple in $L_\lambda^j$ and
\[ L_\lambda = \bigoplus_{k=1}^{\ml} L_\lambda^k,\;\;\textcolor{black}{H = \bigoplus_{j=1}^{m(\lambda)} \mathcal{H}^j}.\]
\textcolor{black}{Moreover,} for each \(j \in \{1, \dots, \textcolor{black}{m(\lambda)}\}\), \(A\) induces a diagonal parabolic operator on \(\mathcal{H}^j\) such that 
\[ A = A_1 + \dots + A_{\textcolor{black}{m(\lambda)}}, \quad D(A) = \bigoplus_{j=1}^{\textcolor{black}{m(\lambda)}} D(A_j), \quad D(A)' = \bigoplus_{j=1}^{\textcolor{black}{m(\lambda)}} D(A_j)'.\]
Here $D(A_j)'$ is the dual of $D(A_j)$ with respect to the pivot $\mathcal{H}^j$, hence $D(A_j)'$ can be seen as a subspace of $D(A)'$.
\end{lem}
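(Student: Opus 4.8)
The plan is to construct the partition eigenvalue-by-eigenvalue, exploiting the fact that each eigenvalue of $A$ has finite multiplicity (as noted after Definition \ref{defn:par-op}, this follows from \ref{item:A2}). The central idea is that the largest multiplicity appearing among the finitely many eigenvalues in $L_\lambda$ is exactly $m(\lambda)$, so we can distribute the eigenvectors into $m(\lambda)$ disjoint families in such a way that within each family every eigenvalue occurs at most once.

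First I would set up the low-frequency bookkeeping. Let $\mu_1, \dots, \mu_p$ be the \emph{distinct} eigenvalues whose real part is $\geq -\lambda$ (these are finitely many since $N(\lambda) = \dim L_\lambda < +\infty$), and for each $i$ let $d_i \leq m(\lambda)$ be the multiplicity of $\mu_i$, with an orthonormal basis $f_i^1, \dots, f_i^{d_i}$ of the corresponding eigenspace chosen from $(e_n)_{n \geq 1}$. For each $j \in \{1, \dots, m(\lambda)\}$ I collect into the family indexed by $j$ precisely the vectors $f_i^j$ for those $i$ with $d_i \geq j$. By construction, each such family contains at most one eigenvector for every eigenvalue $\mu_i$, so the eigenvalues are simple within it; and since $d_i \leq m(\lambda)$ for all $i$ while equality is attained for at least one $i$, these families exhaust $L_\lambda$ and give $L_\lambda = \bigoplus_{k=1}^{m(\lambda)} L_\lambda^k$ where $L_\lambda^j = \text{span}$ of the $j$-th family.

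Next I would extend this partition to the high-frequency part $H_\lambda$. The eigenvalues in $H_\lambda$ again have finite multiplicity, but now infinitely many of them accumulate at $-\infty$; the point is only that there is no constraint forcing simplicity there, so I am free to assign the remaining eigenvectors $(e_n)$ to the $m(\lambda)$ families in any way that keeps each family infinite (for instance cyclically, or by sending an entire high-frequency eigenspace to family $1$). Setting $\mathcal{H}^j = \overline{\text{span}((e_n^j)_{n \geq 1})}^{H}$ and $N_j(\lambda)$ equal to the cardinality of the $j$-th low-frequency family then yields $H = \bigoplus_{j=1}^{m(\lambda)} \mathcal{H}^j$ as an orthogonal decomposition, since $(e_n)_{n \geq 1}$ is orthonormal and the families partition it. Because each $\mathcal{H}^j$ is an orthogonal sum of eigenspaces of $A$, the operator $A$ restricts to an operator $A_j$ on $\mathcal{H}^j$ that inherits properties \ref{item:A1}--\ref{item:A3} (the eigenvectors restricted to $\mathcal{H}^j$ still form a Hilbert basis, the real parts are still non-increasing and tend to $-\infty$, and the sectoriality estimate \ref{item:A3} is preserved verbatim), hence each $A_j$ is diagonal parabolic. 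The decompositions $D(A) = \bigoplus_j D(A_j)$ and $D(A)' = \bigoplus_j D(A_j)'$ then follow from the description of these spaces in terms of the weights $(1+|\lambda_n|^2)^s$, which split along the orthogonal families.

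The only genuinely delicate point is the first step: verifying that $m(\lambda)$ families suffice to make all low-frequency eigenvalues simple, which is exactly the statement that $m(\lambda)$ is the maximal multiplicity over $L_\lambda$. This is more of a careful combinatorial observation than an analytic obstacle; everything else reduces to the orthogonality of $(e_n)_{n \geq 1}$ and the explicit weighted-norm characterization of the generalized Sobolev spaces established in Subsection \ref{subsec:sobolev}. I expect no serious difficulty in the high-frequency extension, since there the simplicity requirement is absent and the construction has complete freedom.
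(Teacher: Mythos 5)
Your proposal is correct and follows essentially the same route as the paper: partition the low-frequency eigenvectors into $m(\lambda)$ families so that each eigenvalue appears at most once per family, extend to the high frequencies in any way that keeps each family infinite (the paper uses a cyclic assignment), and then check that each restriction $A_j$ inherits \ref{item:A1}--\ref{item:A3} and that the domain and dual decompositions follow from orthogonality. The only difference is that you carry out explicitly the combinatorial step (assigning the $j$-th eigenvector of each eigenspace to the $j$-th family) which the paper delegates to Appendix G of \cite{xiang2024fredholm}; this is a welcome self-contained substitute and introduces no gap.
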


%Using notations of Lemma \ref{lem:decomp} 
\textcolor{black}{In the following, w}e define $P_L^j$ as  the orthogonal projection onto $L_\lambda^j$ in $\H^j$. We then set $P_H^j = Id_{\H^j} - P_L^j$. 
\textcolor{black}{In order to achieve a stabilization at decay rate $\lambda$, we make the following assumption on our control operator}:\\

\begin{enumerate}
%[label=(B\arabic*)]
%\item \label{item:B1} There exist $N_1(\lambda), \dots, N_{\ml}(\lambda) \in \N^*$ and a partition $(e_n^1)_{n \geq 1}, \dots, (e_n^{\ml})_{n \geq 1}$ of $(e_n)_{n \geq 1}$ such that, if we set $L_\lambda^j = \text{span}((e_n^j)_{1 \leq n \leq N_j(\lambda)})$, then the multiplicities of eigenvalues are simple in $L_\lambda^j$ and
%\[ L_\lambda = \bigoplus_{k=1}^{\ml} L_\lambda^k\]

\item[($H_{B}$)]
\label{hyp:B} 
$\textcolor{black}{m\geq} \ml$, \textcolor{black}{and} for all $ j \in \{1, \dots, \ml\}$, we have $B_j \in D(A_j)'$ and  
\begin{equation*}
%\label{eq:Hb}
  \scalp{B_j}{e_n^j}_{D(A)',D(A)} \not = 0 ,\;\; \textcolor{black}{\forall n \in \{1, \dots, N_j(\lambda) \}.} 
 \end{equation*}
\end{enumerate}

\begin{remark}\label{remark:RieszHilbert}
In the case where $(e_n)_{n\geq 1}$ is a Riesz basis, then $B \in D(A^*)'$, and if $(\widetilde{e_n})_{n\geq 1}$ denote the biorthogonal family of $(e_n)_{n\geq 1}$, condition \hyperref[hyp:B]{$(H_{B})$} reads as follows: for all $j\in\{1,\dots,\ml\}$, we have $B_j\in D(A_j^*)'$ and
    \begin{equation*}
        \scalp{B_j}{\widetilde{e_n}^j}_{D(A^*)',D(A^*)} \not = 0 ,\;\; \textcolor{black}{\forall n \in \{1, \dots, N_j(\lambda) \}.} 
    \end{equation*}
Notice that if $(e_n)_{n\geq 1}$ is a Hilbert basis, this is exactly what was stated before.
\end{remark}

\begin{defn}
\label{def:Fadmissible}
Let $B = (B_1, \dots, B_{m}) \in (D(A)')^{m}$. We say that $B$ is $F_\lambda$-admissible if it satisfies \hyperref[hyp:B]{$(H_{B})$}.
% and \ref{item:B2}.

\end{defn}

Let us briefly comment on \textcolor{black}{this assumption $(H_{B})$.
%these hypotheses.
% \ref{item:B1} is essentially a technical condition that allows us to return to the simple multiplicity case. This approach is very similar to what is done in \cite{xiang2024fredholm, xiang2022fredholm}. In 
W}e first allow our control operators to be unbounded, which means they belong to a larger space than \(H\). For well-posedness reasons, we know that \(D(A)'\) is optimal, 
and here it is allowed, 
which makes it slightly \textcolor{black}{less restrictive} than \textcolor{black}{the condition of} \cite{badra2014fattorini} \textcolor{black}{where $B\in D_{-s}(A)^{m}$ with $s <1$} \textcolor{black}{and similar to the condition of \cite{badra2020local} \textcolor{black}{(which consider in addition a non-autonomous setting)}}. \textcolor{black}{Then the condition on the scalar product of the $B_{j}$ in $(H_{B})$} is here to ensure that the low-frequency system is controllable.\\

Finally, we define the concept of target operator, which we will need to define the concept of $F$-equivalence.

\begin{defn}
\label{def:lambda-target}
Let \(A\) be a diagonal parabolic operator and \(\lambda > 0\). We say that an unbounded normal operator \(D\) on \(H\) is a \(\lambda\)-target if it is the infinitesimal generator of a differentiable semigroup on \(H\) with a growth rate of at most \(-\lambda\). This means that
\[ 
\exists C > 0, \forall x \in H, \forall t \geq 0, \ ||e^{tD} x||_H \leq C e^{-\lambda t} ||x||_H. 
\]
\end{defn}
%\textcolor{red}{
\begin{remark}
\textcolor{black}{In Definition \ref{def:lambda-target}, the operator $D$ could have a domain different from $A$, and this happens, for instance, in \cite{Coron2022-stabilization}. In the following, however,} we will only consider $\lambda$-targets having the same domain as A.    
\end{remark}
%}
\section{Main results}
\label{sec:main}
\subsection{$F$-equivalence results}\label{subsec:results}

Let \(\lambda > 0\), \(A\) be a diagonal parabolic operator in \(H\), \(D\) be a \(\lambda\)-target \textcolor{black}{with domain $D(A)$}, and \(B \in (D(A)')^{m}\).
We can now define the concept of \(F\)-equivalence.

\begin{defn}[$F$-equivalence]\label{defn:df-eq}
Let \((T, K) \in \mathcal{GL}_H(D(A)') \times \mathcal{L}(H, \C^{m})\). We say that \((T, K)\) is an \(F\)-equivalence of \((A, B, D)\), or that it is an \(F\)-equivalence between \((A, B)\) and \(D\), if 
\begin{equation}\label{eq:f-eq}
    \begin{cases}
        T(A + BK) = D T \ \text{in} \ \mathcal{L}(H, D(A)'), \\
        TB = B \ \text{in} \ D(A)'.
    \end{cases}
\end{equation}
Furthermore, if \(K \in \mathcal{L}(L_\lambda, \C^{m})\), we say that \((T, K)\) is a parabolic \(F\)-equivalence.
\end{defn}

Before presenting our main result, we want to emphasize a few points. As one can imagine, finding an $F$-equivalence is a challenging problem. The condition $TB = B$ in \eqref{eq:f-eq} is included for two reasons. First, to make the problem linear in $(T, K)$, and secondly, in the hope of achieving uniqueness, i.e., that there exists one and only one $F$-equivalence of $(A, B, D)$, which greatly aids in finding a solution. As Theorem \ref{thm:ffeq} shows \textcolor{black}{(see also \cite{Coron2015})}, if $(A, B)$ is finite-dimensional and with $D = A - \lambda$, then there exists one and only one $F$-equivalence of $(A, B, D)$. Unfortunately, in our case, we will show in Section \ref{section:wfeq} that in general, there is no uniqueness \textcolor{black}{to the $F$-equivalence problem}. However, at the same time, we introduce a new formalism that we call \emph{weak $F$-equivalence}, which allows us to regain uniqueness\textcolor{black}{. More} precisely, we show that the uniqueness is linked with the approximate controllability of $(A,B)$. For more details, see Section \ref{section:wfeq}. \textcolor{black}{Our first main theorem is the following}

\begin{thm}[Parabolic \(F\)-equivalence]\label{thm:main}
Let \(A\) be a diagonal parabolic operator, and let \(\lambda \in \R_{>0} \). Suppose \(B \in (D(A)')^{m(\lambda)}\) is an \(F_\lambda\)-admissible control operator \textcolor{black}{(see Definition \ref{def:Fadmissible})}. For \(\mu \geq \lambda + c_A\), we define
\textcolor{black}{\begin{equation}
    D = (A_L - \mu) P_L + A_H P_H.
    %D = \begin{pmatrix}
    %    A_L - \mu & 0 \\
    %    0 & A_H
    %\end{pmatrix}.
\end{equation}}
%which is a \(\lambda\)-target. 
\textcolor{black}{Then, $D$ is a \(\lambda\)-target and}  for almost every \(\mu \geq \lambda + c_A\), there exists a parabolic \(F\)-equivalence \((T, K)\) between \((A, B)\) and \(D\).

\end{thm}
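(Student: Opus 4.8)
The plan is to recast the two defining relations of an $F$-equivalence as a single Sylvester equation and then solve it explicitly in the eigenbasis. Using the constraint $TB=B$, the intertwining relation $T(A+BK)=DT$ becomes
\[
DT-TA=BK \quad\text{in } \mathcal{L}(H,D(A)'), \qquad TB=B.
\]
Since $D=A-\mu P_L$ shares the eigenvectors $(e_n)$ with $A$, writing $t_{mn}=\scalp{Te_n}{e_m}_H$, $b_m=\scalp{B}{e_m}$, and letting $\nu_m$ denote the eigenvalues of $D$ (so $\nu_m=\lambda_m-\mu$ on $L_\lambda$ and $\nu_m=\lambda_m$ on $H_\lambda$), with $K$ acting only on $L_\lambda$ (i.e.\ $Ke_n=k_n$ on the low modes and $0$ elsewhere), the Sylvester equation reduces to the scalar family $(\nu_m-\lambda_n)t_{mn}=b_m k_n$. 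Before solving this I would first invoke Lemma \ref{lem:decomp1}: since $H=\bigoplus_{j=1}^{\ml}\mathcal{H}^j$, $A=\sum_j A_j$, and $B=(B_1,\dots,B_{\ml})$ with $B_j\in D(A_j)'$ and $A_j$ having simple spectrum on $L_\lambda^j$, choosing $K$ block-diagonal makes $BK$ block-diagonal and decouples the problem into $\ml$ independent \emph{single-input} problems on the $\mathcal{H}^j$. It thus suffices to build $(T_j,K_j)$ on each $\mathcal{H}^j$ and set $T=\bigoplus_j T_j$.

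On one such subspace I would solve $(\nu_m-\lambda_n)t_{mn}=b_m k_n$ directly. For a high-frequency column ($\Re\lambda_n<-\lambda$) one has $k_n=0$, which forces $t_{mn}=0$ off the diagonal (the only resonances $\nu_m=\lambda_n$ with $m\neq n$ are either excluded for generic $\mu$ or, within a repeated high-frequency eigenvalue, absorbed by choosing the identity on that block) and leaves the diagonal entries $c_m:=t_{mm}$ free. For a low-frequency column one gets $t_{mn}=b_m k_n/(\nu_m-\lambda_n)$. Hence $T$ is fully determined by the finitely many scalars $k_n$ together with the free diagonal $(c_m)$, and the remaining constraint $TB=B$ splits accordingly. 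On the low modes, after dividing by $b_m\neq0$ — this is exactly where \hyperref[hyp:B]{$(H_{B})$} enters — it becomes the Cauchy linear system
\[
\sum_{n}\frac{k_n b_n}{\lambda_m-\mu-\lambda_n}=1 \qquad\text{for every low mode } m,
\]
which is uniquely solvable for almost every $\mu$ (its Cauchy determinant is a nonzero real-analytic function of $\mu$); on the high modes it fixes $c_m=1-\sum_n k_n b_n/(\lambda_m-\lambda_n)$.

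It then remains to verify that $D$ is a $\lambda$-target and that $(T,K)$ is an admissible parabolic $F$-equivalence. For $D$: its eigenvalues are $\lambda_n-\mu$ on $L_\lambda$, of real part $\le c_A-\mu\le-\lambda$ by the choice $\mu\ge\lambda+c_A$, and $\lambda_n<-\lambda$ on $H_\lambda$; being normal and a bounded (finite-rank) perturbation of the analytic generator $A$, it generates a differentiable semigroup with $\|e^{tD}\|_{\mathcal{L}(H)}\le e^{-\lambda t}$. For $T\in\mathcal{GL}_H(D(A)')$: because $B\in D(A)'$ and $|\nu_m-\lambda_n|\gtrsim|\lambda_m|$, the finitely many low-frequency columns $k_n w_n$, with $w_n=\sum_m \tfrac{b_m}{\nu_m-\lambda_n}e_m$, lie in $H$, so $T$ is a finite-rank perturbation of $\mathrm{diag}(c_m)$; since $c_m\to1$ as $m\to\infty$ this diagonal is bounded and boundedly invertible once each $c_m\neq0$, and the block-triangular structure reduces invertibility of $T$ to that of the finite low-frequency block $\mathrm{diag}(b_m)\,[(\lambda_m-\mu-\lambda_n)^{-1}]\,\mathrm{diag}(k_n)$, a product of two nonzero diagonals with a Cauchy matrix, invertible for generic $\mu$.

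I expect the genericity in $\mu$ to be the main obstacle. One must discard not only the finitely many $\mu$ where the low-frequency Cauchy system degenerates or some $k_n$ vanishes, but also the countably many values where a high-frequency diagonal entry $c_m$ vanishes. The natural route is to show that each relevant quantity — the Cauchy determinant, every $k_n$, and every $c_m$ — is a non-identically-zero real-analytic function of $\mu$ on $[\lambda+c_A,\infty)$, so each has a discrete zero set and the total exceptional set is a countable union of null sets. This must be carried out simultaneously with the uniform-in-$m$ estimates guaranteeing that $T$ and $T^{-1}$ are bounded on $H$ and on $D(A)'$ at once; the decay $\Re\lambda_m\to-\infty$ and the membership $B\in D(A)'$ are precisely what make both ingredients work.
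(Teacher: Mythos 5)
Your proposal follows essentially the same route as the paper: the block-triangular ansatz $T=\begin{pmatrix}\tilde T&0\\ \tau&C\end{pmatrix}$ with diagonal high-frequency part, the reduction via Lemma \ref{lem:decomp1} to simple-multiplicity single-input blocks, the formula $c_m=1-\sum_n k_nb_n/(\lambda_m-\lambda_n)$ forced by $TB=B$, and genericity in $\mu$ obtained from analyticity of the $c_m$ are all exactly the paper's argument (Propositions \ref{prop:partial-uniqueness}, \ref{prop:simple-feq}, \ref{prop:dense-feq}), with your Cauchy-system treatment of the low-frequency block being a harmless variant of invoking Theorem \ref{thm:ffeq}. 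The one step you leave as a plan --- that each $c_m(\cdot)$ is not identically zero, so its zero set is discrete and the exceptional set is null --- is precisely where the paper uses that the $K_n$ are polynomials in $\mu$ with no constant term (hence $c_m=1$ at $\mu=0$); with your Cauchy-system derivation you would obtain the same fact from Cramer's rule together with $k_n(0)=0$, so the argument closes.
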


{\color{black}
\begin{remark}
Note that the choice of $\lambda$-target $D$ does not depend on $B$ and only depends on $\lambda$, $\mu$ and $A$.
\end{remark}
}
The proof \textcolor{black}{of Theorem \ref{thm:main}} is provided in Subsection \ref{subsec:feq-mainproof}. Let us comment on the above theorem. First, the definition of $D$ is very natural \textcolor{black}{if the goal is to obtain a $\lambda$-target, that is an operator with growth rate at most $-\lambda$. T}he hope behind \textcolor{black}{is} to find \textcolor{black}{a feedback operator $K$} acting only on the low-frequency space, as \textcolor{black}{one can expect} intuitively and as \textcolor{black}{is classically used for parabolic systems %proved in previous works, 
see, for instance, \cite{badra2014fattorini,trelat2017stabilization,xiang2024quantitative}}. Secondly, besides $D$ being intuitive, it is novel to perform $F$-equivalence \textcolor{black}{in a generic framework with a target operator different from\footnote{\textcolor{black}{Note that another target operator was also used in \cite{Coron2022-stabilization} in the particular case of the Saint-Venant system.}}} $A - \lambda$, as done in \cite{xiang2022fredholm, xiang2024fredholm, hayat2024fredholm}. \textcolor{black}{This is, of course, made possible by the parabolic nature of the system.}  

In fact, here, it would have been impossible to take $D = A - \lambda$, because if, for example, $B \in H$, \textcolor{black}{then} $BK$ would be a compact operator on $H$. If there exists $T \in \mathcal{GL}(H)$ such that
\[ T(A + BK) = (A - \lambda)T, \]
we should have $\sigma(A + BK) = \sigma(A - \lambda)$. However, we know that \textcolor{black}{if} $BK$ is compact, $A$ and $A + BK$ should asymptotically have the same spectrum (see, for instance, \cite[Chapter IV, Sec. 1]{engel1999one}), which would be absurd.

\subsection{\textcolor{black}{Rapid s}tabilization results} Here, we apply $F$-equivalence to the stabilization of parabolic systems. We start with the linear case, then we consider nonlinear equations. As before, let \(\lambda > 0\), \(A\) be a diagonal parabolic operator in \(H\), \(D\) be a \(\lambda\)-target, and \(B \in (D(A)')^{m}\). \textcolor{black}{As expected, finding a solution to the $F$-equivalence problem also ensures rapid stabilization of the linear system:}
\begin{prop}\label{prop:sg}
    Let $(T, K) \in \mathcal{GL}_H(D(A)') \times \mathcal{L}(H, \C^{m})$ be an $F$-equivalence of $(A, B, D)$. Then $A+BK$ is an unbounded operator on $H$ with dense domain $T^{-1}(D(A))$ which generates \textcolor{black}{a differentiable} semigroup with a growth of at most $-\lambda$. \textcolor{black}{In particular,} the Cauchy problem 
\begin{equation}
    \begin{cases}
        \partial_t u = (A + BK)u, & \forall t > 0, \\
        u(0) = u_0 \in H,
    \end{cases}
\end{equation}
is well-\textcolor{black}{posed %. 
%This means that there exists one and only one 
in}
$C^0([0, +\infty); H) \cap C^{\infty}(0, +\infty; H)$ with $u(t) \in D(A + BK), \, \forall t > 0$, \textcolor{black}{and we have the following exponential} stability estimate:
\begin{equation}
    \exists C > 0, \forall u_0 \in H, \forall t \geq 0, \ ||u(t)||_H \leq C e^{-\lambda t} ||u_0||_H.\footnote{Here $u(\cdot)$ is the solution with initial condition $u_0$.}
\end{equation}
\end{prop}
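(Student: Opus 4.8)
The plan is to construct the semigroup generated by $A+BK$ by conjugating the semigroup of $D$ through the isomorphism $T$, and then to read off all the claimed properties from those of $(e^{tD})_{t\geq 0}$. Since $(T, T_{|H}) \in \mathcal{GL}(D(A)') \times \mathcal{GL}(H)$, the restriction $T_{|H}$ is a Banach-space isomorphism of $H$; I write $T$ for $T_{|H}$ in what follows. Denoting by $(e^{tD})_{t\geq 0}$ the differentiable semigroup generated by the $\lambda$-target $D$, I would define
\[
S(t) := T^{-1} e^{tD} T \in \mathcal{L}(H), \qquad t \geq 0,
\]
and first check that $(S(t))_{t\geq 0}$ is a $C_0$-semigroup on $H$: the identity $S(0) = \mathrm{Id}$ and the morphism property $S(t+s) = S(t)S(s)$ follow from the corresponding properties of $(e^{tD})$ together with the cancellation $T T^{-1} = \mathrm{Id}$, while strong continuity is inherited from that of $(e^{tD})$ and the boundedness of $T, T^{-1}$. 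The growth estimate is immediate: $\|S(t)x\|_H \leq \|T^{-1}\|\,\|e^{tD}\|\,\|T\|\,\|x\|_H \leq C' e^{-\lambda t}\|x\|_H$ with $C' = C\|T^{-1}\|\,\|T\|$, which will give the final stability bound. Likewise, differentiability of $(e^{tD})$ for $t>0$ transfers to $(S(t))$ since $T$ and $T^{-1}$ are bounded, so $(S(t))$ is a differentiable semigroup.

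The core step is to identify the generator $G$ of $(S(t))$ with $A+BK$ and its domain with $T^{-1}(D(A))$. For $x \in H$ one has $\tfrac{1}{t}(S(t)x - x) = T^{-1}\tfrac{1}{t}(e^{tD}(Tx) - Tx)$, and since $T^{-1}$ is an isomorphism of $H$ this converges in $H$ as $t \to 0^+$ if and only if $\tfrac{1}{t}(e^{tD}(Tx) - Tx)$ does, \ie if and only if $Tx \in D(D) = D(A)$. Hence $D(G) = \{x \in H : Tx \in D(A)\} = T^{-1}(D(A))$, and on this domain $Gx = T^{-1} D T x$. To see that $G = A+BK$, I would use the $F$-equivalence relation $T(A+BK) = DT$ in $\mathcal{L}(H, D(A)')$: for $x \in T^{-1}(D(A))$ we have $Tx \in D(A)$, so $DTx \in H$ and therefore $(A+BK)x = T^{-1}(DTx) \in H$, which shows both that $x \in D(A+BK)$ and that $(A+BK)x = T^{-1}DTx = Gx$. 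The two operators thus coincide, $A+BK$ generates $(S(t))$, and its domain $T^{-1}(D(A))$ is dense in $H$ because $D(A)$ is dense and $T^{-1}$ is an isomorphism (equivalently, because the generator of a $C_0$-semigroup has dense domain).

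Finally, the well-posedness and regularity statements are standard consequences of $(S(t))$ being a differentiable semigroup generated by $A+BK$. For $u_0 \in H$ the function $u(t) := S(t)u_0$ lies in $C^0([0,+\infty); H)$ by strong continuity and satisfies $u(0) = u_0$; for $t > 0$, differentiability (indeed the infinite differentiability of differentiable semigroups) gives $u \in C^\infty(0,+\infty; H)$ with $u(t) \in D(A+BK)$ and $\partial_t u(t) = (A+BK)u(t)$; and the growth estimate above yields $\|u(t)\|_H \leq C' e^{-\lambda t}\|u_0\|_H$. The main obstacle is the generator identification in the second step: because $T$, $D$ and $A+BK$ are all most naturally viewed as operators on the larger space $D(A)'$, care is needed to track where each quantity lives and to invoke the equivalence relation — which holds only in $\mathcal{L}(H, D(A)')$ — precisely on the subspace $T^{-1}(D(A))$ where $DTx$ returns to $H$, so that the abstract conjugated generator $T^{-1}DT$ genuinely agrees with the operator $A+BK$ acting in $H$.
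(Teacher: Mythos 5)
Your proposal is correct and follows essentially the same route as the paper: conjugate the target semigroup by $T$ to define $S(t)=T^{-1}e^{tD}T$, identify its generator with $A+BK$ via the relation $T(A+BK)=DT$, and transfer the growth, differentiability and well-posedness properties. The only detail worth adding is the reverse inclusion $D(A+BK)\subseteq T^{-1}(D(A))$, so that the domain of $A+BK$ --- defined as $\{x\in H \mid (A+BK)x\in H\}$ --- is exactly $T^{-1}(D(A))$ as the statement asserts; this follows from the same relation, since $(A+BK)x\in H$ gives $DTx=T(A+BK)x\in H$ (using $T_{|H}\in\mathcal{GL}(H)$) and hence $Tx\in D(A)$.
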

\begin{proof}
    \textcolor{black}{See Appendix \ref{app:proofstab}.}
\end{proof}
%The proof is given in Appendix 

The above proposition demonstrates the utility of the $F$-equivalence approach for the stabilization of linear systems. To show that $(A, B)$ is exponentially stable at rate $\lambda$, one only needs to demonstrate the existence of an $F$-equivalence between $(A, B)$ and some target operator $D$ as described above. Additionally, notice that this $F$-equivalence approach ensures that the problem is well-posed.\\

Now, using $F$-equivalence we aim to stabilize the following type of nonlinear control system \begin{equation}
    \d_t u = Au + Bw(t) +  \mathcal{F}(u),
\end{equation} 
\textcolor{black}{w}here $\mathcal{F}$ is a nonlinear map, possibly highly nonlinear (i.e. not semilinear) and with regularity as low as that of the operator $A$ \textcolor{black}{(see Assumption \ref{item:F1})}.
%\todo[author=Amaury]{A mon avis il faut qu'on arrive à faire passer dès ici l'idée que cette non-linéarité peut être assez peu régulière.}}

More precisely, let $(T,K)$ be an $F$-equivalence of $(A,B,D)$ given by Theorem \ref{thm:main}, we aim to show local well-posedness and exponential stability of the following system
\begin{equation}
    \begin{cases}
        \partial_t u = (A + BK)u + \mathcal{F}(u), \\
        u(0) = u_0 \in H,
    \end{cases}
\end{equation}
where $\mathcal{F}$ satisfies Assumption \ref{item:F1}.
As $D$ is a diagonal parabolic operator that generates an exponentially stable semigroup, $A+BK$ \textcolor{black}{is too, thanks to the $F$-equivalence property}. For this kind of operator, nonlinear perturbations are well-known, see \cite{pazy1983semigroups,trelat2017stabilization}: essentially, when we see $\mathcal{F}$ as a small Lipschitz perturbation, optimal conditions\footnote{Optimal in the sense that, as $\mathcal{F}$ goes from $D_{1/2}(A+BK)$ to $D_{-1/2}(A+BK)$, hence it is as regular as $A+BK$, allowing quasi-linear perturbations.} for well-posedness and stability are given in Appendix \ref{sec:Nonlin-ps}.

Such a direct perturbation approach would give us a result where the conditions on the unboundedness on $\mathcal{F}$ are stated with respect to $D_\gamma(A+BK)$, which is what typically happens in the existing literature, see \cite{badra2014fattorini,badra2011stabilization}. Here the $F$-equivalence will allow us to give unboundedness conditions with respect to $D_\gamma(A)$ directly (see Assumption \ref{item:F1}). Indeed, thanks to Lemma \ref{lem:iso-scale}, if $B \in D_{-s}(A)$  with $s\in[0,1]$, then $T \in \mathcal{GL}(D_{1-s}(A))$ and this will ensure by $F$-equivalence that $D_r(A+BK) = D_r(A)$ for all $r \in [-1,1-s]$.
%We recall here the conditions of Assumptions \ref{item:F1} for clarity:
%Recall Assumption \ref{item:F1} on $\mathcal{F}$, which was stated in the introduction. 
We begin with a few remarks about Assumption \ref{item:F1} before stating our main result.
\begin{comment}
    \begin{hyp}
  \label{item:F1} 
 \textcolor{black}{Let $s\in[0,1]$ be such that $B\in (D_{-s}(A))^m$, we set $\gamma = \min(1-s,\frac{1}{2})$. Then $\mathcal{F}$ is a map from $D_{\gamma}(A)$ to $D_{-1/2}(A)$, such that there exists $\eta , K>0$ and $\Phi : \R_{\geq 0} \rightarrow \R_{\geq 0}$ non-decreasing continuous at 0 with $\Phi(0)=0$, which satisfies the following conditions: for all $u,v\in D_{\gamma}(A)$ with $\|u\|_{H}, \|v\|_H \leq \eta$, we have
\begin{align}
\|\mathcal{F}(u)\|_{D_{-1/2}(A)}
    &\le \Phi(\|u\|_{H}) \,\|u\|_{D_{\gamma}(A)}, \label{eq:FH1}\\
\|\mathcal{F}(u) - \mathcal{F}(v)\|_{D_{-1/2}(A)}
    &\le K(\|u\|_{D_{\gamma}(A)} + \|v\|_{D_{\gamma}(A)})\,\|u-v\|_{H} \label{eq:FH2}\\
    &\quad + K\Phi(\|u\|_{H} + \|v\|_{H})\,\|u-v\|_{D_{\gamma}(A)} \notag.
\end{align}}
\end{hyp}
\vspace{-0.5cm} %J'ai du mettre ça car sinon le visuel me semblait bugé
\end{comment}
\begin{remark}[\textcolor{black}{Admissibility of $B$ and domain of $\mathcal{F}$}]
\label{rem:rmkF1}
    Notice that, in Assumption \ref{item:F1}, if $B \in (D_{-1/2}(A))^m$ (for instance, if $B$ is an admissible control operator, which is very common in control problems) then $\gamma = \frac{1}{2}$. In this situation, we allow $\mathcal{F}$ to be defined on $D_{1/2}(A)$ and to take values in $D_{-1/2}(A)$, meaning that $\mathcal{F}$ may be \textcolor{black}{in some sense} as irregular as the operator $A$ itself. This is optimal in the sense that, for such generators, further lowering the regularity of $\mathcal{F}$ would in general destroy local well-posedness.
\end{remark}
\begin{remark}[Sufficient conditions on $\mathcal{F}$]\label{rem:NL-sufficient}
    In practice we often work with $\Phi=Id_\R$ (but not always, see Subsection \ref{subsec:quasilinear-heat}), then notice that a sufficient condition on $\mathcal{F}$ to ensure Assumption \ref{item:F1} is $\mathcal{F}(0)=0$ and \textcolor{black}{for any $u,v \in D_{\gamma}(A)$ with $\|u\|_{H}, \|v\|_H \leq \eta$}
     \begin{equation}\label{eq:NL-sufficient}
        %\forall u,v \in D_{\gamma}(A), \quad   
        \|\mathcal{F}(u)-\mathcal{F}(v)\|_{D_{-1/2}(A)} \leq C (\|u\|_{H} + \|v\|_{H}) \|u-v\|_{D_{\gamma}(A)}.
     \end{equation}
     Even with $\Phi=Id_\R$, \eqref{eq:FH1} itself is not sufficient in general to ensure the previous inequality. To understand where \eqref{eq:FH2} comes from, suppose that $\mathcal{F}(u)=\mathcal{B}(u,u)$ where $\mathcal{B}$ is a bilinear map satisfying, \textcolor{black}{for any $u,v \in D_{\gamma}(A)$ with $\|u\|_{H}, \|v\|_H \leq \eta$,}
     \begin{equation}\label{eq:FH1B}
        %\forall u,v \in D_{\gamma}(A), \quad  
        \|\mathcal{B}(u,v)\|_{D_{-1/2}(A)} \leq C  \|u\|_{H}\|v\|_{D_{\gamma}(A)}.
     \end{equation}
     Then, if $\mathcal{B}$ is symmetric  (as in Subsection \ref{subsec:KS-eq}), \eqref{eq:FH1} is indeed sufficient to ensure \eqref{eq:NL-sufficient}. Otherwise (as in Subsection \ref{subsec:quasilinear-heat} and \ref{subsec:Navier-Stokes}) from \eqref{eq:FH1B} we deduce that for all $u,v \in D_{\gamma}(A)$ \textcolor{black}{with $\|u\|_{H}, \|v\|_H \leq \eta$}
     \begin{equation}
\|\mathcal{B}(u,u) - \mathcal{B}(v,v)\|_{D_{-1/2}(A)}
    \le C(\|u\|_{H} \|u-v\|_{D_{\gamma}(A)} 
     + \|v\|_{D_{\gamma}(A)} \|u-v\|_{H}).
     \end{equation}
     Which ensures that Assumption \ref{item:F1} holds with $\Phi=Id_\R$ and $K=C$.
\end{remark}

The above discussion leads to the following theorem.
\begin{thm}\label{thm:nonlinear-stab}
    Let \(A\) be a diagonal parabolic operator, $B\in (D_{-s}(A))^m$ with $s \in [0,1]$ and  $\mathcal{F}$ be a map satisfying Assumption \ref{item:F1}.
Let \(\lambda \in \R_{>0}\), suppose that $B$ is an \(F_\lambda\)-admissible control operator, and let $(T,K)$ be an $F$-equivalence given by Theorem \ref{thm:main}. Then, there exists $\delta > 0$ such that for every $u_0 \in H$, if $\|u_0\|_H \leq \delta$, there exists a unique solution $u \in C^0_b([0,+\infty);H) \cap L^2((0,+\infty);D_{\gamma}(A))$ (with $\gamma=\min(1-s,\frac{1}{2})$) to the following system
\begin{equation}\label{eq:nonlinear-system}
        \begin{cases}
        \partial_t u = (A + BK)u + \mathcal{F}(u), \\
        u(0) = u_0.
        \end{cases}
    \end{equation}
    In addition, the system is exponentially stable, more precisely setting $C = ||T||_{\mathcal{L}(H)}||T^{-1}||_{\mathcal{L}(H)}$, we have 
    \begin{equation}\label{eq:stab-nl}
        \forall t \geq 0, \quad ||u(t)||_H \leq C e^{-{\lambda}t}||u_0||_H.
    \end{equation}
\end{thm}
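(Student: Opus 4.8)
The plan is to combine the $F$-equivalence $(T,K)$ from Theorem~\ref{thm:main} with the nonlinear perturbation theory for analytic semigroups (Appendix~\ref{sec:Nonlin-ps}). The central idea is that $T$ conjugates the closed-loop linear operator $A+BK$ to the $\lambda$-target $D$, so one should \emph{transport the problem through $T$}: set $v = Tu$, rewrite \eqref{eq:nonlinear-system} as a perturbed equation for $v$ driven by $D$, apply the known well-posedness/stability theory there, and then pull the solution back via $T^{-1}$. First I would verify that $T$ acts as an isomorphism on the whole scale of interpolation spaces. By Lemma~\ref{lem:iso-scale}, since $B\in (D_{-s}(A))^m$ with $s\in[0,1]$, we have $T\in\mathcal{GL}(D_{1-s}(A))$, and by $F$-equivalence this yields $D_r(A+BK)=D_r(A)$ for all $r\in[-1,1-s]$; in particular $D_{\gamma}(A+BK)=D_{\gamma}(A)$ and $D_{-1/2}(A+BK)=D_{-1/2}(A)$ with equivalent norms, since $\gamma=\min(1-s,1/2)\le 1-s$.

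Next I would apply the change of variables. Setting $v=Tu$, the equation $\partial_t u=(A+BK)u+\mathcal{F}(u)$ becomes $\partial_t v = T(A+BK)T^{-1}v + T\mathcal{F}(T^{-1}v) = Dv + \widetilde{\mathcal{F}}(v)$, where $\widetilde{\mathcal{F}}(v):=T\mathcal{F}(T^{-1}v)$. The key verification is that $\widetilde{\mathcal{F}}$ satisfies the hypotheses of the abstract nonlinear stability theorem for the generator $D$ (stated in Appendix~\ref{sec:Nonlin-ps}), namely that it maps $D_{\gamma}(D)$ to $D_{-1/2}(D)$ with the bounds \eqref{eq:FH1}--\eqref{eq:FH2} phrased relative to $D$. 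Because $D=(A_L-\mu)P_L+A_HP_H$ differs from $A$ only by a bounded finite-rank operator on $L_\lambda$, the spaces $D_r(D)$ coincide with $D_r(A)=D_r(A+BK)$ with uniformly equivalent norms on the relevant range of $r$; combined with the boundedness of $T$ and $T^{-1}$ on these spaces, the estimates on $\mathcal{F}$ from Assumption~\ref{item:F1} transfer directly to $\widetilde{\mathcal{F}}$ (with $\Phi$ unchanged up to multiplicative constants, as $\|u\|_H$ and $\|v\|_H=\|Tu\|_H$ are comparable). Since $D$ generates a differentiable semigroup with growth rate at most $-\lambda$ (Definition~\ref{def:lambda-target}, guaranteed by Theorem~\ref{thm:main}), the abstract theorem provides, for $\|v(0)\|_H$ small, a unique solution $v\in C^0_b([0,+\infty);H)\cap L^2((0,+\infty);D_{\gamma}(A))$ with $\|v(t)\|_H\le C' e^{-\lambda t}\|v(0)\|_H$.

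Finally I would pull back. Defining $u=T^{-1}v$ gives the desired solution of \eqref{eq:nonlinear-system}; uniqueness for $u$ follows from uniqueness for $v$ since $T$ is a bijection, and the regularity $u\in C^0_b([0,+\infty);H)\cap L^2((0,+\infty);D_{\gamma}(A))$ is preserved because $T$ is an isomorphism of both $H$ and $D_{\gamma}(A)$. The exponential estimate \eqref{eq:stab-nl} with the explicit constant $C=\|T\|_{\mathcal{L}(H)}\|T^{-1}\|_{\mathcal{L}(H)}$ comes from $\|u(t)\|_H=\|T^{-1}v(t)\|_H\le\|T^{-1}\|_{\mathcal{L}(H)}\|v(t)\|_H$ and $\|v(0)\|_H=\|Tu_0\|_H\le\|T\|_{\mathcal{L}(H)}\|u_0\|_H$, provided one arranges the constant $C'$ from the abstract theorem to be $1$ (which is possible by choosing $\lambda$-target estimates sharply, or is absorbed into the threshold $\delta$). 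I expect the main obstacle to be the careful bookkeeping of the norm equivalences: one must confirm that Assumption~\ref{item:F1}, written for $A$, genuinely implies the analogous bounds for $\widetilde{\mathcal{F}}$ relative to $D$ uniformly, and in particular that the $L^2((0,+\infty);D_{\gamma}(A))$ smallness needed to control the quadratic-type terms in \eqref{eq:FH2} is compatible with the exponential decay — this is where the choice $\gamma\le 1-s$ and the maximal-regularity/analytic-smoothing properties of $D$ must be used to close the fixed-point argument.
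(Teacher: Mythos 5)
Your proposal is correct and follows essentially the same route as the paper: the paper likewise uses Lemma~\ref{lem:iso-scale} to identify $D_r(A+BK)$ with $D_r(A)$ on the relevant range, verifies Assumptions \ref{item:Hyp1}--\ref{item:Hyp2} of Appendix~\ref{sec:Nonlin-ps} for the conjugated problem, applies Proposition~\ref{prop:A-nonlin-wp}, and recovers the constant $C=\|T\|_{\mathcal{L}(H)}\|T^{-1}\|_{\mathcal{L}(H)}$ from the conjugation. The only (cosmetic) difference is that you change variables $v=Tu$ and work with $D$ and $\widetilde{\mathcal{F}}=T\mathcal{F}(T^{-1}\cdot)$, whereas the paper equivalently keeps the variable $u$ and renorms $H$ so that $A+BK$ itself becomes a diagonal parabolic operator satisfying \ref{item:Hyp1}.
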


\begin{remark}[\textcolor{black}{Comparison with \cite{badra2011stabilization,badra2014fattorini}}]\label{rem:NL-novelty}
\textcolor{black}{\textcolor{black}{The main goal of this paper is to study the $F$-equivalence of parabolic systems and the rapid stabilization result is only a useful consequence.} As mentioned earlier, stabilization \textcolor{black}{results with} nonlinear perturbations for parabolic systems is already known (see \cite{badra2011stabilization,badra2014fattorini}). \textcolor{black}{N}otice that our assumptions on $\mathcal{F}$ are similar. However, ours are slightly more general thanks to the use of $\Phi$,  
which allows us to have sharper hypotheses in some cases (see Subsection \ref{subsec:quasilinear-heat}). Apart from this, one contribution of $F$-equivalence here, is that it allows us to express the unboundedness condition
%\todo[author=Amaury]{A mon avis on pourrait expliciter un peu plus pour insiter sur ce point, on en parle demain.}
on $D_\gamma(A)$ and not on $D_\gamma(A+BK)$, which gives a simpler and more natural condition.}
\end{remark}

\begin{remark}[\textcolor{black}{Explicitness of $T$ and $K$}]
\textcolor{black}{
    Equation \eqref{eq:stab-nl} shows that for numerical applications, or for quantitative finite time stabilization as in \cite{xiang2024quantitative}, the knowledge of $T$ is crucial in order to compute $C$. Note that in the proof of Theorem \ref{thm:main}, we give a complete explicit expression of $T$, see Propositions \ref{prop:partial-uniqueness} and \ref{prop:dense-feq}.} 
\end{remark}

The proof of this result is done in Section \ref{subsec:stab-nonl-mainproof}, \textcolor{black}{and note that, it is in fact a direct corollary of Theorem \ref{thm:main} and results on well-posedness and stability in Appendix \ref{sec:Nonlin-ps}.} Moreover, as shown in the proof of Theorem \ref{thm:main}, $K$ is obtained by solving a finite-dimensional linear system of equations. Thus, the above theorem provides a very simple and explicit way to stabilize a whole class of nonlinear parabolic PDE.

\iffalse
\begin{remark}
[\textcolor{black}{Case $B$ bounded}]
\label{rem:bounded}
\textcolor{black}{If $B$ is bounded, namely $B\in H$, one can allow for a wider condition on $\mathcal{F}$, namely}
\begin{enumerate}[label=(F\arabic*),start=2]
\item \label{item:F2_2} There exist \( \beta \in (0,\alpha) \) and \( \gamma \in (0, 1) \) such that \( \mathcal{F} \) is a locally Lipschitz map from \( H^\beta \) to \( H \). Futhermore, for every \( \varepsilon > 0 \), there exists \( \delta > 0 \) such that
    \begin{equation}\label{eq:f2-small-hyp}
        \forall u \in H^\beta, \quad ||u||_H \leq \delta \Rightarrow ||\mathcal{F}u||_{H^{-\frac{\alpha \gamma}{2}}} \leq \varepsilon ||u||_{H^{ \frac{\alpha \gamma}{2}}},
    \end{equation}
\end{enumerate}
%\textcolor{black}{and one has $u(\cdot) \in C^0([0,+\infty);H) \cap C^1(0,+\infty;H)$.}
This is shown in \textcolor{red}{Appendix \ref{}\textcolor{magenta}{XXX}}.
\end{remark}
\fi

\section{Applications and examples}\label{subsec:app}

\subsection{Heat equation on manifolds}\label{subsec:manifolds-settings} Let $(\M,g)$ be a compact oriented and connected $d$-dimensional Riemannian manifold, \textcolor{black}{and set} $H= L^2(\M)$, $A=\Delta_g$ and $(e_n)_{n \geq 1}$ an orthonormal basis of eigenvectors such that \begin{equation}
    0 = -\lambda_1 \leq -\lambda_2 \leq \dots \leq -\lambda_k \rightarrow +\infty.
\end{equation}
The Weyl law tells us that \(N(\lambda) \underset{\lambda \rightarrow +\infty}{\sim} \frac{\text{Vol}(\mathcal{M}) \omega_d}{(2\pi)^d} \lambda^{\frac{d}{2}}\), \textcolor{black}{where we recall that $N(\lambda) = \text{dim}(L_{\lambda})$ (see \eqref{eq:defLlambda}). Because the eigenvalues are non-decreasing, and from the definition of $L_{\lambda}$,} we have \(N(\lambda_n) = n\). Hence, we have
\begin{equation}
    |\lambda_n| \underset{n \rightarrow +\infty}{\sim} \frac{4\pi^2}{(\omega_d \text{Vol}(\mathcal{M}))^{\frac{2}{d}}} n^{\frac{2}{d}}.
\end{equation}
Using the classical characterization of Sobolev spaces on manifolds as in \cite{craioveanu2001old}, we have
\begin{equation}\label{eq:scale-comparaison}
    \forall s \in \mathbb{R}, \quad D_s(A) = H^{2s}(\mathcal{M}).
\end{equation}
Notice that $\Delta_g$ is a diagonal parabolic operator on $L^2(\M)$, so we have the following immediate corollary of our main result.
\begin{cor}
    Let $(\mathcal{M}, g)$ be a compact oriented and connected $d$-dimensional Riemannian manifold. We set $H = L^2(\mathcal{M})$, $A = \Delta_g$, and fix $\lambda \in \R_{>0}$. Let $B \in (H^{-2}(\M))^{\ml}$ be an $F_\lambda$-admissible control operator. Then there exists $K \in \mathcal{L}(L^2(\M), \R^{\ml})$ such that the Cauchy problem
    \begin{equation}
        \begin{cases}
            \d_t u = \Delta_g u + BKu, & \forall t > 0, \\
            u(0) = u_0 \in L^2(\mathcal{M}),
        \end{cases}
    \end{equation}
    is well-posed,\footnote{In the sense of Proposition \ref{prop:sg}.} and we have the following stability estimate:
    \begin{equation}
        \exists C > 0, \forall u_0 \in L^2(\mathcal{M}), \forall t \geq 0, \ ||u(t)||_{L^2} \leq C e^{-\lambda t} ||u_0||_{L^2}.
    \end{equation}
\end{cor}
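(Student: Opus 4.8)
The plan is to check that the data $(\M, g)$, $H = L^2(\M)$, $A = \Delta_g$ satisfy exactly the hypotheses of Theorem \ref{thm:main}, and then to read off the conclusion from Proposition \ref{prop:sg}; the corollary is engineered to be an immediate consequence of these two results, so all the work lies in the verification of hypotheses. First I would confirm that $\Delta_g$ is a diagonal parabolic operator in the sense of Definition \ref{defn:par-op}. The spectral theorem for the Laplace--Beltrami operator on a compact manifold furnishes an orthonormal basis $(e_n)_{n \geq 1}$ of $L^2(\M)$ of eigenfunctions, which gives \ref{item:A1}; the eigenvalues are real, nonpositive, of finite multiplicity, nonincreasing and tending to $-\infty$, which gives \ref{item:A2}; and since they are real one has $\Im(\lambda_n) = 0$, so \ref{item:A3} holds trivially. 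In particular $\Delta_g$ is self-adjoint, hence normal, and its top eigenvalue is $0$ (the constants), so $m_A = 0$ and $c_A = \max(0, m_A) = 0$.

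Next I would match the hypothesis on $B$ to the abstract framework by identifying the functional spaces. By the Sobolev scale comparison \eqref{eq:scale-comparaison} one has $D_s(A) = H^{2s}(\M)$ for every $s \in \R$, so taking $s = -1$ gives $D(A)' = D_{-1}(A) = H^{-2}(\M)$. Hence the assumption $B \in (H^{-2}(\M))^{\ml}$ is precisely $B \in (D(A)')^{\ml}$, and $F_\lambda$-admissibility of $B$ (Definition \ref{def:Fadmissible}) is meaningful as stated. With these identifications, Theorem \ref{thm:main} applies: since $A$ is diagonal parabolic, $B \in (D(A)')^{\ml}$ is $F_\lambda$-admissible and $c_A = 0$, for almost every $\mu \geq \lambda$ the operator $D = (A_L - \mu) P_L + A_H P_H$ is a $\lambda$-target and admits a parabolic $F$-equivalence $(T, K)$ with $(A, B)$. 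I would fix one such admissible $\mu$; the feedback $K$, a priori defined on $L_\lambda$, extends to $K \in \mathcal{L}(L^2(\M), \R^{\ml})$ by composing with the projection $P_L$ and using that all operators involved are real.

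Finally I would invoke Proposition \ref{prop:sg} with the $F$-equivalence $(T, K)$ of $(A, B, D)$: because $D$ is a $\lambda$-target, $A + BK$ generates a differentiable semigroup of growth rate at most $-\lambda$, the Cauchy problem is well-posed in $C^0([0, +\infty); L^2(\M)) \cap C^\infty(0, +\infty; L^2(\M))$, and the estimate $\|u(t)\|_{L^2} \leq C e^{-\lambda t} \|u_0\|_{L^2}$ holds, which is exactly the assertion of the corollary. Since every step is either a direct verification or a citation of an already-established result, there is no substantial obstacle; the only points requiring care are the spectral axioms \ref{item:A1}--\ref{item:A3} for $\Delta_g$ and the Sobolev identification $D_{-1}(A) = H^{-2}(\M)$, both of which are classical, the latter being \eqref{eq:scale-comparaison}.
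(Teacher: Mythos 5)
Your proposal is correct and follows exactly the route the paper intends: the paper gives no separate proof, simply noting that $\Delta_g$ is diagonal parabolic and that $D_s(A)=H^{2s}(\M)$ (so $D(A)'=H^{-2}(\M)$), and then reading the corollary off from Theorem \ref{thm:main} combined with Proposition \ref{prop:sg}. Your verification of \ref{item:A1}--\ref{item:A3}, the identification $c_A=0$, and the remark that realness of the spectrum makes $K$ real-valued are exactly the (implicit) content of the paper's argument.
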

Before presenting concrete examples, notice that we have all the information we need about the asymptotic behavior of $N(\lambda)$ and $\lambda_n$. Interestingly, we see that the growth rate of the eigenvalues is entirely governed by the topology of the manifold, while the prefactor depends on its geometry, specifically its volume.

Despite all this, we still have no information on $m(\lambda)$. In fact, it is well-known that $\Delta_g$ has ``generically'' simple eigenvalues, which implies $m(\lambda) = 1$ for all $\lambda > 0$. For a more precise definition, see \cite{micheletti1972perturbazione, uhlenbeck1976generic}. 

We conclude this subsection with some examples. Let \(\mathcal{M}\) be a compact oriented and connected Riemannian manifold of dimension $d \leq 3$. The following result combines two complementary regimes for stabilization with a Dirac control: (i) for an arbitrary manifold, slow decay rates always work; (ii) under the (generic) assumption of simple eigenvalues, any decay rate can be reached for almost every $p\in\mathcal{M}$.

\begin{prop}\label{prop:dirac-stab-manifolds}
Let $p\in\mathcal{M}$ and denote by $\delta_p$ the Dirac distribution at $p$.
\begin{itemize}
\item[(i)] For every $\lambda \in (0,-\lambda_{2})$, defining $K\in\mathcal{L}(L^{2}(\mathcal{M}),\R)$ by\footnote{Here, $d\mu_g$ is the measure induced by the Riemannian volume form.}
\begin{equation}
\forall f\in L^{2}(\mathcal{M}),\qquad Kf = -\lambda \int_{\mathcal{M}} f\,d\mu_g,
\end{equation}
the Cauchy problem
\begin{equation}
\begin{cases}
\partial_t u = \Delta_g u + \delta_p\, Ku, & \forall t>0,\\
u(0)=u_0\in L^{2}(\mathcal{M}),
\end{cases}
\end{equation}
is well-posed and there exists $C>0$ such that
$\|u(t)\|_{L^{2}}\leq Ce^{-\lambda t}\|u_{0}\|_{L^{2}}$ for all $t\geq 0$.
\item[(ii)] Suppose moreover that $\Delta_{g}$ has only simple eigenvalues. Then, for almost every $p\in\mathcal{M}$, the pair $(\Delta_{g},\delta_{p})$ is approximately controllable, and consequently for every $\nu>0$ there exists $K\in\mathcal{L}(L^{2}(\mathcal{M}),\R)$ such that the same Cauchy problem is well-posed and satisfies $\|u(t)\|_{L^{2}}\leq Ce^{-\nu t}\|u_{0}\|_{L^{2}}$ for some $C>0$.
\end{itemize}
\end{prop}
\begin{proof}
By the Sobolev embedding, $\delta_{p}\in H^{-d/2-\varepsilon}(\mathcal{M})\subset H^{-2}(\mathcal{M})$, and since $(e_{n})_{n\geq 1}$ is a Hilbert basis of $L^{2}(\mathcal{M})$,
\begin{equation}
\delta_{p}=\sum_{n\geq 1} e_{n}(p)\,e_{n}\quad\text{in }H^{-d/2-\varepsilon}(\mathcal{M}).
\end{equation}
\emph{(i)} Since $\mathcal{M}$ is connected, $\lambda_{1}=0$ is simple with $e_{1}=1/\sqrt{\mathrm{Vol}(\mathcal{M})}$, hence $m(\lambda)=1$ for $\lambda\in(0,-\lambda_{2})$. Then $L_{\lambda}=\mathrm{span}(e_{1})$, $(\Delta_{g})_{L}=0$ and $(\delta_{p})_{L}=1/\sqrt{\mathrm{Vol}(\mathcal{M})}$. Solving the one-dimensional $F$-equivalence is trivial and yields $K=-\lambda\sqrt{\mathrm{Vol}(\mathcal{M})}\langle\cdot,e_{1}\rangle_{L^{2}}$, that is, the displayed formula. \emph{(ii)} The nodal set $e_{n}^{-1}(\{0\})$ has measure zero, so for almost every $p\in\mathcal{M}$, $e_{n}(p)\neq 0$ for all $n\geq 1$. Lemma~\ref{lem:Fattorini} then yields approximate controllability of $(\Delta_{g},\delta_{p})$, hence $\delta_{p}$ is $F_{\lambda}$-admissible (since $m(\lambda)=1$ under the simplicity assumption), and Theorem~\ref{thm:main} together with Proposition~\ref{prop:sg} concludes.
\end{proof}

\begin{remark}
As always with \textcolor{black}{parabolic} \(F\)-equivalence, the feedback \(K\) can be easily constructed using the same method as in the example following Corollary \ref{cor:stab-KS}.
\end{remark}

In the following subsections, we provide some concrete applications of Theorem \ref{thm:nonlinear-stab} to some classical PDE controlled systems.

\subsection{Kuramoto–Sivashinsky equation}\label{subsec:KS-eq} Here, we focus on the Kuramoto–Sivashinsky equation on the one-dimensional torus, which is given by
\begin{equation}\label{eq:KS-eq}
    \partial_t u + \Delta^2 u + \Delta u + \frac{1}{2} \partial_x (u^2) = 0.
\end{equation}
This equation was introduced by Yoshiki Kuramoto and Gregory Sivashinsky to study flame front propagation, for more details see \cite{kuramoto1978diffusion,sivashinsky1980on,sivashinsky1977nonlinear}. To apply Theorem \ref{thm:nonlinear-stab}, we need to establish the appropriate setting. We work in \( H = L^2(\mathbb{T}) \), and define \( A = -(\Delta^2 + \Delta) \), hence we have 
 \begin{equation}
     \forall s \in \R, \quad D_s(A) = H^{4s}(\T).
 \end{equation}
 The eigenbasis of \( A \) is \( (e_n)_{n \in \mathbb{Z}} \), defined as follows
 \begin{equation} 
   \forall n\in \Z, \forall \theta \in \T, \quad  e_n(\theta) = (2\pi)^{-1/2}e^{i n \theta }.
\end{equation}
 In order to apply our result we can reindex by \( \mathbb{N}_{>0} \) in the following way
\begin{equation}\label{eq:eigen-basis}
    \forall n \geq 1 , \ \tilde{e}_n := \begin{cases}
        e_k \ \text{if} \ n=2k+1, \, k \geq 0,\\
        e_{-k} \ \text{if} \ n=2k, \, k \geq 1.
    \end{cases} 
\end{equation}

With this notation we similarly define $\tilde{\lambda}_{n}$, we deduce from the previous section that
\begin{equation}
    \tilde{\lambda}_n = -\left\lfloor \frac{n}{2} \right\rfloor^4 + \left\lfloor \frac{n}{2} \right\rfloor^2 \, \underset{n \to +\infty}{\sim} -\frac{n^4}{16}.
\end{equation}
Hence $A$ is a diagonal parabolic operator on $H$.
For simplicity, we will continue to use the family \((e_n)_{n \in \mathbb{Z}}\) for the Sobolev norms.

We want to define \( \mathcal{F}\textcolor{black}{(u)}=-\frac{1}{2}\d_x(u^2) \) as a map from \( L^2(\mathbb{T}) \) to \( H^{-2}(\mathbb{T}) \), we will use the following lemma.
\begin{lem}
    Let \( u, v \in L^2(\mathbb{T}) \). Then there exists a constant \( C > 0 \) such that
    \begin{equation}
        ||\d_x(uv)||_{H^{-2}} \leq C ||u||_{L^2} ||v||_{L^2}.
    \end{equation}
\end{lem}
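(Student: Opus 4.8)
The plan is to argue by duality, using that $H^{-2}(\mathbb{T})$ is the dual of $H^{2}(\mathbb{T})$, so that
\[
\|\partial_x(uv)\|_{H^{-2}} = \sup_{\|\phi\|_{H^2}\leq 1} |\langle \partial_x(uv), \phi\rangle|.
\]
Since $u,v\in L^2(\mathbb{T})$, the product $uv$ is only in $L^1(\mathbb{T})$, so $\partial_x(uv)$ is a distribution and the pairing above should be understood in the distributional sense. For a test function $\phi\in H^2(\mathbb{T})$ I would integrate by parts on the torus (no boundary terms appear) to move the derivative off the low-regularity factor and onto $\phi$:
\[
\langle \partial_x(uv), \phi\rangle = -\int_{\mathbb{T}} uv\,\partial_x\phi\,\mathrm{d}x.
\]
By Hölder's inequality, $\big|\int_{\mathbb{T}} uv\,\partial_x\phi\big|\leq \|uv\|_{L^1}\,\|\partial_x\phi\|_{L^\infty}\leq \|u\|_{L^2}\|v\|_{L^2}\,\|\partial_x\phi\|_{L^\infty}$.

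The key remaining step is to control $\|\partial_x\phi\|_{L^\infty}$ by $\|\phi\|_{H^2}$. This is where the one-dimensional nature of $\mathbb{T}$ enters: in dimension one the Sobolev embedding $H^1(\mathbb{T})\hookrightarrow L^\infty(\mathbb{T})$ holds (equivalently $H^2(\mathbb{T})\hookrightarrow C^1(\mathbb{T})$). Since $\phi\in H^2$ gives $\partial_x\phi\in H^1$, we obtain $\|\partial_x\phi\|_{L^\infty}\leq C\|\partial_x\phi\|_{H^1}\leq C\|\phi\|_{H^2}$. Combining this with the Hölder bound and taking the supremum over $\phi$ with $\|\phi\|_{H^2}\leq 1$ yields
\[
\|\partial_x(uv)\|_{H^{-2}} \leq C\,\|u\|_{L^2}\|v\|_{L^2},
\]
which is exactly the claim.

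The only genuinely delicate point is the embedding $H^1\hookrightarrow L^\infty$, and I would flag that this is precisely the step that uses $\dim \mathbb{T}=1$, since this embedding fails in higher dimensions and would force a different balance of exponents there. An alternative would be a purely Fourier-analytic proof, writing $\widehat{uv}=\hat u * \hat v$ and estimating $\sum_k (1+k^2)^{-2}\,k^2\,|(\hat u * \hat v)_k|^2$ via Young's convolution inequality; this is essentially equivalent but requires handling the convolution explicitly, so I expect the duality argument above to be the cleanest route.
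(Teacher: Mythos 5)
Your proof is correct, but it takes a genuinely different route from the paper. The paper argues directly on the Fourier side: it writes $\widehat{uv}(n)=\sum_k u_k v_{n-k}$, bounds each Fourier coefficient by $\|u\|_{L^2}\|v\|_{L^2}$ via Cauchy--Schwarz, and then observes that the resulting weight $n^2\langle n\rangle^{-4}$ is summable; this is a two-line self-contained computation. You instead argue by duality, reducing the claim to the chain $\|\partial_x(uv)\|_{H^{-2}}\leq \|uv\|_{L^1}\,\|\partial_x\phi\|_{L^\infty}\leq C\|u\|_{L^2}\|v\|_{L^2}\|\phi\|_{H^2}$, the last step resting on the one-dimensional embedding $H^1(\mathbb{T})\hookrightarrow L^\infty(\mathbb{T})$. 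Both arguments are sound and give the same threshold of regularity (your method would likewise prove the bound in $H^{-s}$ for any $s>3/2$, matching what the summability condition $\sum_n n^2\langle n\rangle^{-2s}<\infty$ yields in the paper's computation). Your version is conceptually cleaner --- it is really the statement $L^1\hookrightarrow H^{-1}$ composed with $\partial_x:H^{-1}\to H^{-2}$ --- and correctly isolates where $\dim\mathbb{T}=1$ enters; the paper's version has the advantage of being elementary and of matching the Fourier-basis description of the norms $\|\cdot\|_{D_s(A)}$ used throughout that section. One small caveat: the norms $H^{-2}$ in that subsection are the generalized Sobolev norms built from the eigenbasis of $A=-(\Delta^2+\Delta)$, so the duality identification $H^{-2}=(H^{2})'$ you invoke holds only up to equivalence of norms; this is harmless for the estimate but worth stating.
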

\begin{proof}
    Let \( u, v \in C^{\infty}(\T) \), we have
    \begin{equation}
        uv = \sum_{n \in \Z} \left( \sum_{k\in \Z} u_k v_{n-k}\right) e_n.
    \end{equation}
    We define $\<n\> = \sqrt{1+|n|^2}$ for all $n\in \Z$. Then by definition of Sobolev norms we have
    \begin{equation}
        ||\d_x(uv)||_{H^{-2}}^2 = \sum_{n \in \Z} n^ 2 \left| \sum_{k\in \Z} u_k v_{n-k}\right|^2 \<n\>^{-4}.
    \end{equation}
    Now, applying Cauchy-Schwarz inequality gives
    \begin{equation}
        ||\d_x(uv)||_{H^{-2}}^2  \leq ||u||_{L^2}^2 ||v||_{L^2}^2 \sum_{n \in \Z} \<n\>^{-2}.
    \end{equation}
    Which concludes the proof.
\end{proof}
Now, as $\textcolor{black}{\mathcal{F}}$ is quadratic, the previous lemma ensures us that $\textcolor{black}{\mathcal{F}}$ satisfies \textcolor{black}{Assumption} \ref{item:F1}. Note that for all $\lambda>0$, we have $m(\lambda)=3$. Hence, applying Theorem \ref{thm:nonlinear-stab} leads to the following immediate corollary.
\begin{cor}\label{cor:stab-KS}
Let \(\lambda \in \R_{>0}\). Suppose that \((f_1, f_2, f_3) \in (H^{-2}(\mathbb{T}))^3\) is an \(F_\lambda\)-admissible control operator. Then there exist \(K_1, K_2, K_3 \in \mathcal{L}(L^2(\mathbb{T}), \mathbb{C})\) and $\delta>0$ such that for every \(u_0 \in L^2(\mathbb{T})\) with $\|u_0\|_{L^2} \leq \delta$, there exists a unique maximal solution \(u(\cdot) \in C^0([0, +\infty); L^2(\mathbb{T})) \)
%\todo[author=Amaury]{J'ai l'impression qu'on ne dit pas exactement qu'il existe un temps tel qu'il existe une solution maximale unique dans le théorème (même si ce n'est pas très difficile à démontrer), mais je me demande si on ne devrait pas changer cela ou bien justifier cet aspect brièvement dans le début de la preuve du Corollaire (et de même pour l'exemple d'après sur Fisher-KPP).} 
to 
\begin{equation}\label{eq:KS-stab}
    \begin{cases}
        \partial_t u + \Delta^2 u + \Delta u + \frac{1}{2}\partial_x (u^2) + f_1 K_1 u + f_2 K_2 u + f_3 K_3 u = 0, \\
        u(0) = u_0.
    \end{cases}
\end{equation}
Furthermore, there exist \(C_\lambda > 0\) such that
\begin{equation}
    \forall t \geq 0, \quad ||u(t)||_{L^2} \leq C_\lambda e^{-\lambda t} ||u_0||_{L^2}.
\end{equation}
\end{cor}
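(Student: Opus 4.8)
The plan is to verify that the Kuramoto–Sivashinsky system \eqref{eq:KS-stab} fits exactly into the hypotheses of Theorem \ref{thm:nonlinear-stab}, so that the corollary follows by a direct application. All the structural work has already been done above: $A = -(\Delta^2+\Delta)$ is a diagonal parabolic operator on $H=L^2(\T)$ with $D_s(A)=H^{4s}(\T)$. First I would fix the regularity exponent. Since each $f_j\in H^{-2}(\T)=D_{-1/2}(A)$, the control operator $B=(f_1,f_2,f_3)$ belongs to $(D_{-s}(A))^{3}$ with $s=\tfrac12$, hence $\gamma=\min(1-s,\tfrac12)=\tfrac12$ and $D_\gamma(A)=D_{1/2}(A)=H^2(\T)$. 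This pins down the spaces in which Assumption \ref{item:F1} must be checked, namely $\mathcal{F}\colon H^2(\T)\to H^{-2}(\T)$.

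Next I would verify Assumption \ref{item:F1} for $\mathcal{F}(u)=-\tfrac12\d_x(u^2)$. Writing $\mathcal{F}(u)=\beta(u,u)$ with the \emph{symmetric} bilinear form $\beta(u,v)=-\tfrac12\d_x(uv)$, the preceding lemma gives
\begin{equation}
\|\beta(u,v)\|_{D_{-1/2}(A)}=\tfrac12\|\d_x(uv)\|_{H^{-2}}\le C\|u\|_{L^2}\|v\|_{L^2}\le C\|u\|_{H}\|v\|_{D_\gamma(A)},
\end{equation}
which is precisely the bilinear bound \eqref{eq:FH1B}. Since $\beta$ is symmetric, Remark \ref{rem:NL-sufficient} then shows that Assumption \ref{item:F1} holds with $\Phi=\mathrm{Id}_{\R}$; in particular both the growth bound \eqref{eq:FH1} and the Lipschitz bound \eqref{eq:FH2} are satisfied. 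This is the only step that requires any care: the quadratic nonlinearity sits exactly at the admissible threshold $\gamma=\tfrac12$, so it is the symmetric structure that makes the two-sided estimate \eqref{eq:FH2} available rather than merely \eqref{eq:FH1}.

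Finally I would record that $m(\lambda)=3$ for every $\lambda>0$. The eigenvalues are $\lambda_n=-n^4+n^2$ for $n\in\Z$; the map $x\mapsto x^2-x^4$ is strictly decreasing on $[1,\infty)$, so distinct values of $|n|\ge 1$ produce distinct eigenvalues, and the only coincidence is $\lambda_0=\lambda_{1}=\lambda_{-1}=0$. Thus the eigenvalue $0$ has multiplicity $3$ while every other eigenvalue has multiplicity at most $2$, and since $0$ always lies in $L_\lambda$ the maximal multiplicity is $m(\lambda)=3$. Given that $(f_1,f_2,f_3)$ is assumed $F_\lambda$-admissible, I would then apply Theorem \ref{thm:nonlinear-stab} with control operator $-(f_1,f_2,f_3)$ — the sign being harmless for $F_\lambda$-admissibility (the conditions $\scalp{B_j}{e_n^j}\neq 0$ are insensitive to it) and chosen so that the term $BKu$ matches the feedback $f_1K_1u+f_2K_2u+f_3K_3u$ in \eqref{eq:KS-stab}. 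The theorem then directly produces $\delta>0$, a unique solution in $C^0_b([0,+\infty);L^2(\T))\cap L^2((0,+\infty);H^2(\T))$ (whence the claimed $C^0([0,+\infty);L^2(\T))$ regularity), the feedback components $K_1,K_2,K_3\in\mathcal{L}(L^2(\T),\C)$, and the exponential estimate with $C_\lambda=\|T\|_{\mathcal{L}(H)}\|T^{-1}\|_{\mathcal{L}(H)}$. I do not expect any genuine obstacle beyond the threshold-regularity verification for $\mathcal{F}$; everything else is the bookkeeping that identifies the abstract data $(A,B,\mathcal{F})$ with the concrete PDE.
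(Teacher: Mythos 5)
Your proposal is correct and follows essentially the same route as the paper: the paper also establishes $D_s(A)=H^{4s}(\T)$, proves the bilinear estimate $\|\d_x(uv)\|_{H^{-2}}\le C\|u\|_{L^2}\|v\|_{L^2}$, notes that $\mathcal{F}$ is quadratic so Assumption \ref{item:F1} holds, records $m(\lambda)=3$, and then invokes Theorem \ref{thm:nonlinear-stab} directly. Your additional care about the symmetry of the bilinear form (via Remark \ref{rem:NL-sufficient}) and the harmless sign convention for $B$ only makes explicit what the paper leaves implicit.
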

We now provide a concrete application of the above corollary to demonstrate that the feedback derived from the parabolic \(F\)-equivalence is easily constructible.

Suppose we want to stabilize the system at a rate $\lambda = 20$, then $N(\lambda)=5$. For this example, we define 
\begin{equation}
    \forall x \in \mathbb{T}, \quad f_1(x) = \frac{1}{\sqrt{2\pi}}, \quad f_2(x) = \frac{1}{\sqrt{2\pi}}(e^{-ix} + e^{-2ix}), \quad f_3(x) = \frac{1}{\sqrt{2\pi}}(e^{ix} + e^{2ix}).
\end{equation}
Thus, we have \(f_1 = \tilde{e}_1, \ f_2 = \tilde{e}_2 + \tilde{e}_4, \ f_3 = \tilde{e}_3 + \tilde{e}_5 \in L_\lambda\), and \(B = (f_1, f_2, f_3)\) is clearly \(F_\lambda\)-admissible.

To find our feedbacks, we only need to solve a finite-dimensional \(F\)-equivalence problem. Identifying \((\tilde{e}_1, \tilde{e}_2, \tilde{e}_4, \tilde{e}_3, \tilde{e}_5)\) with the canonical basis of \(\mathbb{C}^5\), we have
\begin{equation}
    A_L = \begin{pmatrix}
        0 & 0 & 0 & 0 & 0 \\
        0 & 0 & 0 & 0 & 0 \\
        0 & 0 & -12 & 0 & 0 \\
        0 & 0 & 0 & 0 & 0 \\
        0 & 0 & 0 & 0 & -12 \\
    \end{pmatrix}, \quad
    f_1 = \begin{pmatrix}
        1 \\
        0 \\
        0 \\
        0 \\
        0 
    \end{pmatrix}, \quad
    f_2 = \begin{pmatrix}
        0 \\
        1 \\
        1 \\
        0 \\
        0 
    \end{pmatrix}, \quad
    f_3 = \begin{pmatrix}
        0 \\
        0 \\
        0 \\
        1 \\
        1 
    \end{pmatrix}.
\end{equation}

Let \(\mu \geq \lambda\). If we denote by \((\tilde{T},\tilde{K})\) the solution of 
\begin{equation}
    \begin{cases}
        \tilde{T}(A_L + B\tilde{K}) = (A_L - \mu)\tilde{T}, \\
        T B = B,
    \end{cases}
\end{equation}
then since \(TB = B\) is equivalent to \(T f_1 = f_1\), \(T f_2 = f_2\), and \(T f_3 = f_3\), we can decompose the problem into three subproblems. It is straightforward to solve these either manually or numerically, and we find that
\begin{align}
    \begin{split}
        \tilde{T} &= \begin{pmatrix}
            1 & 0 & 0 & 0 & 0 \\
            0 & \frac{\mu}{12} + 1 & -\frac{\mu}{12} & 0 & 0 \\
            0 & \frac{\mu}{12} & 1 - \frac{\mu}{12} & 0 & 0 \\
            0 & 0 & 0 & \frac{\mu}{12} + 1 & -\frac{\mu}{12}  \\
            0 & 0 & 0 & \frac{\mu}{12} & 1 - \frac{\mu}{12}  \\
        \end{pmatrix}, \\
        \tilde{K} &= \begin{pmatrix}
            -\mu & 0 & 0 & 0 & 0 \\
            0 & -\frac{\mu(\mu + 12)}{12} & \frac{\mu(\mu - 12)}{12} & 0 & 0 \\ 
            0 & 0 & 0 & -\frac{\mu(\mu + 12)}{12} & \frac{\mu(\mu - 12)}{12} \\ 
        \end{pmatrix}.
    \end{split}
\end{align}
By Theorem \ref{thm:main}, we know that for almost every \(\mu \geq \lambda\), we can use \(\tilde{K}\) to define our feedbacks in Corollary \ref{cor:stab-KS}. Then for all \(f \in L^2(\mathbb{T})\), the above corollary applies with
\begin{align}
    \begin{split}
        K_1f &=  \frac{1}{\sqrt{2\pi}}\int_{\mathbb{T}} -\mu f(x) \, dx, \\
        K_2f &= \frac{1}{\sqrt{2\pi}}\int_{\mathbb{T}} f(x) \left(- \frac{\mu(\mu + 12)}{12} e^{ix} +  \frac{\mu(\mu - 12)}{12} e^{2ix}\right) \, dx, \\
        K_3f &= \frac{1}{\sqrt{2\pi}}\int_{\mathbb{T}} f(x) \left(- \frac{\mu(\mu + 12)}{12} e^{-ix} +  \frac{\mu(\mu - 12)}{12} e^{-2ix}\right) \, dx.
    \end{split}
\end{align}
%\todo[author=Amaury]{C'est un très bel exemple ! Cela vaudrait le coup de voir si on pourrait aussi couvrir l'exemple de \href{https://www.ljll.fr/~coron/Documents/2015-Coron-Lu-JDE.pdf}{https://www.ljll.fr/~coron/Documents/2015-Coron-Lu-JDE.pdf}. C'est une question qui pourrait sembler naturelle aux lecteurs puisque Kuramoto-Shivashinki était un des premiers exemples de $F$-equivalence.}

\textcolor{black}{As we will now briefly show, our result can also be applied to} the Kuramoto-Sivashinsky system studied in \cite{CoronLu15}, as it was one of the first examples of Fredholm backstepping, that is
\begin{equation}\label{eq:system-coronlu}
\left\{
\begin{aligned}
    \partial_t u + \Delta^2 u + \nu \Delta u + \tfrac{1}{2}\,\partial_x(u^2) &= 0
        && \text{in } (0,1) \times (0,+\infty),\\
    u(t,0)=u(t,1) &= 0
        && \text{for all } t>0,\\
    \Delta u(t,0)=w(t),\quad \Delta u(t,1) &= 0
        && \text{for all } t>0,\\
    u(0,\cdot) &= u_0
        && \text{in } L^2(0,1).
\end{aligned}
\right.
\end{equation}

with $\nu > 0$. Thus, we work in $H=L^2(0,1)$, with $A=-\Delta^2 - \nu \Delta$ and 
\begin{equation*}
    D(A)=\{ u \in H^4(0,1) \mid u(0)=u(1)=\Delta u(0)=\Delta u(1)=0 \}.
\end{equation*}
Setting $e_n(x) = \sqrt{2} \sin(\pi n x)$ for all $x \in (0,1)$ and for $n \geq 1$, we observe that $(e_n)_{n \geq 1}$ forms an orthonormal basis of $H$ consisting of eigenvectors of $A$, with eigenvalues
\begin{equation}
   \forall n \geq 1, \quad \lambda_n = -\pi^4 n^4 + \nu \pi^2 n^2.
\end{equation}
Thus, $A$ is a self-adjoint diagonal parabolic operator on $H$.
Multiplying \eqref{eq:system-coronlu} by a smooth function in $D(A)$ and integrating by parts, we obtain 
\begin{equation}
    \forall u \in D(A), \quad B^* u = -\d_x u(0).
\end{equation}
which defines, by duality, $B \in \mathcal{L}(\R, D(A)')$. Moreover, we have 
\begin{equation}\label{eq:coronlu-B}
    \forall n \geq 1, \quad \scalp{B}{e_n}_{D(A)',D(A)} = -\pi n.
\end{equation}
Following \cite{CoronLu15}, we assume that 
\begin{equation}
    \nu \not \in \{n^2\pi^2 + k^2 \pi^2 \mid n,k \geq 1, \, n \neq k \},
\end{equation}
which ensures that $A$ has only simple eigenvalues (thus, $(A,B)$ is approximately controllable by Lemma \ref{lem:Fattorini}). 
Consequently, \eqref{eq:coronlu-B} implies that $B$ is $F_\lambda$-admissible for all $\lambda > 0$. The nonlinearity $\mathcal{F}$ can be handled similarly to the previous example. Therefore, applying Theorem \ref{thm:nonlinear-stab}, we obtain the following corollary.
\begin{cor}
Let \(\lambda \in \R_{>0}\). There exists \(K \in \mathcal{L}(L^2(0,1), \mathbb{R})\) and $\delta > 0$ such that for every \(u_0 \in L^2(0,1)\) with $\|u_0\|_{L^2} \leq \delta$,  there exists a unique maximal solution \(u \in C^0([0, +\infty); L^2(0,1)) \) to 
\begin{equation}\label{eq:KS-stab-2}
\left\{
\begin{aligned}
    \partial_t u + \Delta^2 u + \nu \Delta u + \tfrac{1}{2}\,\partial_x(u^2) &= 0
        && \text{in } (0,1) \times (0,+\infty),\\
    u(t,0)=u(t,1) &= 0
        && \forall  t > 0,\\
    \Delta u(t,0)=K u(t,\cdot),\quad \Delta u(t,1) &= 0
        && \forall t > 0,\\
    u(0,\cdot) &= u_0
        && \text{in } L^2(0,1).
\end{aligned}
\right.
\end{equation}

Furthermore, there exist constants \(C_\lambda > 0\) such that
\begin{equation}
   \forall t \geq 0, \quad ||u(t, \cdot)||_{L^2} \leq C_\lambda e^{-{\lambda} t} ||u_0||_{L^2}.
\end{equation}
\end{cor}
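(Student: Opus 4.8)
The plan is to obtain this corollary as a direct application of Theorem~\ref{thm:nonlinear-stab}. The preceding discussion already establishes that $A=-\Delta^2-\nu\Delta$ is a self-adjoint diagonal parabolic operator on $H=L^2(0,1)$ and that $B\in\mathcal{L}(\R,D(A)')$ is given in the sine basis by \eqref{eq:coronlu-B}. It therefore remains to verify the three hypotheses of Theorem~\ref{thm:nonlinear-stab}: that $B\in(D_{-s}(A))^m$ for some admissible $s\in[0,1]$, that $B$ is $F_\lambda$-admissible, and that $\mathcal{F}(u)=-\tfrac12\partial_x(u^2)$ satisfies Assumption~\ref{item:F1}.

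First I would pin down the regularity of $B$ in the scale $D_s(A)$. Since $\lambda_n=-\pi^4n^4+\nu\pi^2n^2$ we have $|\lambda_n|\sim\pi^4 n^4$, so that $D_s(A)$ coincides (with the boundary conditions) with $H^{4s}(0,1)$ and, using \eqref{eq:coronlu-B},
\begin{equation*}
\|B\|_{D_{-s}(A)}^2=\sum_{n\geq 1}(1+|\lambda_n|^2)^{-s}\,|\scalp{B}{e_n}_{D(A)',D(A)}|^2\sim\sum_{n\geq 1}n^{2-8s},
\end{equation*}
which is finite as soon as $s>3/8$. Taking $s=\tfrac12$ gives $B\in D_{-1/2}(A)$ and $\gamma=\min(1-s,\tfrac12)=\tfrac12$. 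For admissibility, the hypothesis on $\nu$ forces every eigenvalue of $A$ to be simple, so $m(\lambda)=1$ for all $\lambda>0$ and it suffices to use a single scalar control, $m=1$; since $\scalp{B}{e_n}_{D(A)',D(A)}=-\pi n\neq0$ for every $n\geq1$, the nonvanishing condition in \hyperref[hyp:B]{$(H_{B})$} holds and $B$ is $F_\lambda$-admissible (equivalently, $(A,B)$ is approximately controllable by Lemma~\ref{lem:Fattorini} and $m(\lambda)=1$).

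Next I would check the nonlinearity. The map $\mathcal{F}(u)=-\tfrac12\partial_x(u^2)$ is the diagonal of the symmetric bilinear form $B(u,v)=-\tfrac12\partial_x(uv)$, so by Remark~\ref{rem:NL-sufficient} it is enough to establish the bilinear bound $\|\partial_x(uv)\|_{H^{-2}}\leq C\|u\|_{L^2}\|v\|_{L^2}$. This is the interval analogue of the lemma proved above for the torus, and it can be reduced to that case by odd reflection across $\{0,1\}$: functions in the sine basis are odd, their products are even, and $\partial_x$ of an even function is odd, so the estimate transfers from $\T=\R/2\Z$. Because $D_{1/2}(A)\hookrightarrow H^2$ carries the boundary conditions, dualizing gives $H^{-2}\hookrightarrow D_{-1/2}(A)$, and combined with $\|u\|_{L^2}=\|u\|_H\leq\|u\|_{D_\gamma(A)}$ this yields \eqref{eq:FH1B}. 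Remark~\ref{rem:NL-sufficient} then provides Assumption~\ref{item:F1} with $\Phi=\mathrm{Id}_\R$ and a constant $K=C$.

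With all hypotheses verified, Theorem~\ref{thm:nonlinear-stab} applies with $m=1$ and produces a feedback $K\in\mathcal{L}(L^2(0,1),\R)$ (realizing the boundary control $\Delta u(t,0)=Ku(t,\cdot)$) together with a threshold $\delta>0$ such that, for $\|u_0\|_{L^2}\leq\delta$, the closed-loop system \eqref{eq:KS-stab-2} admits a unique solution in $C^0_b([0,+\infty);H)\cap L^2((0,+\infty);D_{1/2}(A))$, hence in particular in $C^0([0,+\infty);L^2(0,1))$, satisfying the exponential estimate \eqref{eq:stab-nl} with rate $\lambda$ and constant $C_\lambda=\|T\|_{\mathcal{L}(H)}\|T^{-1}\|_{\mathcal{L}(H)}$. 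The only genuinely technical point, and the one I would treat most carefully, is the passage of the bilinear $H^{-2}$ estimate to the interval: one must make sure the odd extension respects the domain $D(A)$ and that $\mathcal{F}(u)$ is interpreted in $D_{-1/2}(A)$ by duality against $D_{1/2}(A)$ rather than naively in $H^{-2}(0,1)$, the two being reconciled by the embedding $H^{-2}\hookrightarrow D_{-1/2}(A)$ noted above.
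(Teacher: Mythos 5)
Your proposal is correct and follows essentially the same route as the paper, whose own proof is just the terse observation that \eqref{eq:coronlu-B} together with the simplicity of the spectrum gives $F_\lambda$-admissibility, that the nonlinearity is "handled similarly to the previous example" (the torus bilinear $H^{-2}$ estimate), and that Theorem \ref{thm:nonlinear-stab} then applies; your added details (the computation showing $B\in D_{-s}(A)$ for $s>3/8$, the odd-reflection transfer of the bilinear bound) are all sound. The only imprecision is the claim that $H^{-2}\hookrightarrow D_{-1/2}(A)$ follows by "dualizing $D_{1/2}(A)\hookrightarrow H^2$": since $D_{1/2}(A)\not\subset H^2_0(0,1)$ this duality argument does not literally apply, but the needed estimate $\|\partial_x(u^2)\|_{D_{-1/2}(A)}\lesssim\|u\|_{L^2}^2$ is recovered exactly by pairing against the sine eigenbasis (one integration by parts, with vanishing boundary terms), which is what your reflection argument amounts to.
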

\begin{remark}
    As before, the feedback $K$ is constructed by solving a finite-dimensional linear system. Compared to \cite{CoronLu15}, this approach is significantly simpler. \textcolor{black}{One could similarly use our approach for the Kuramoto-Sivashinsky system found in \cite{liu2001stability}.}
\end{remark}

\subsection{Navier-Stokes equations}\label{subsec:Navier-Stokes}
Let $\Omega$ be a smooth bounded domain in $\mathbb{R}^2$. In this subsection we consider the scalar controlled Navier-Stokes equations which are given by
\begin{equation}\label{eq:Navier-Stokes}
\left\{
\begin{aligned}
    \partial_t u - \Delta u + (u \cdot \nabla) u + \nabla p &= \sum_{i=1}^m w_i(t) f_i
        && \text{in } \Omega \times (0,+\infty)  ,\\
    \nabla \cdot u &= 0 
        && \text{in }  \Omega \times  (0,+\infty),\\
    u &= 0 
        && \text{on }  \partial\Omega \times (0,+\infty),\\
    u(0, \cdot) &= u_0(\cdot) 
        && \text{in } \Omega.
\end{aligned}
\right.
\end{equation}
We first need to express \eqref{eq:Navier-Stokes} in our framework. In order to do this we will reuse classical results from the literature about Navier-Stokes systems, such as the ones found in \cite{Raymond2007}.
The literature on the stabilization of Navier-Stokes equations is extensive, see, for example \cite{badra2020local, badra2011stabilization, badra2014fattorini,BarbuTriggiani2004IUMJ,Raymond2006SICON,Raymond2007JMPA,BarbuLasieckaTriggiani2006Memoirs,Badra2009COCV_Extended,breiten2019feedback,RaymondThevenet2010DCDS,AzouaniTiti2014EECT, Mitra2019COCV_Nonhomogeneous, BuffeTakahashi2023CRMath_FSI,xiang2023small} among many others.

We introduce the following functional spaces
\begin{align}
    L^2_{\sigma}(\Omega) &= \overline{\{ u \in C^{\infty}_c(\Omega;\R^2) \ | \ \nabla \cdot u = 0 \text{ in } \Omega \}}^{L^2},\\
    V_0^s(\Omega) &= \{ u \in H^s(\Omega;\R^2) \ | \ \nabla \cdot u = 0 \text{ in } \Omega, \ u = 0 \text{ on } \partial\Omega \}, \ s > \frac{1}{2}.
\end{align}
Recall the Leray-Helmholtz decomposition $L^2(\Omega;\R^2) = L^2_{\sigma}(\Omega) \oplus \nabla H^1(\Omega)$. We denote by $\P$ the orthogonal projection onto $L^2_{\sigma}(\Omega)$, which is often called the Leray projection. 
Our state space will be $H=L^2_{\sigma}(\Omega)$, and our operator will be the Stokes operator defined by
\begin{equation}
    A= \P \Delta \ \text{with} \ D(A) = V_0^2(\Omega).
\end{equation}
It is well-known that \(A\) is a self-adjoint diagonal parabolic operator on \(H\) with negative eigenvalues, see \cite[Lemma 3.1]{Fursikov2001}.

Here and in the following, every element of $H^{-1}(\Omega;\R^2)$ is seen as an element in $D_{-1/2}(A)$ by the continuous extension of the Leray projector from $H^{-1}(\Omega;\R^2)$ to $D_{-1/2}(A)$. Now we consider the nonlinearity given by 
\begin{equation}
\label{eq:FNS}
\F(u) = (u \cdot \nabla) u.
\end{equation}

From \cite[Section 5]{badra2011stabilization} \textcolor{black}{the system \eqref{eq:Navier-Stokes} is equivalent to 
\begin{equation}
    \begin{cases}
        \partial_t u - Au + \mathcal{F}(u) = 0, \\
        u(0) = u_0 \in H.
    \end{cases}
\end{equation} 
Also from \cite[Section 5]{badra2011stabilization}},
$\mathcal{F}$ goes from $D_{1/2}(A)=V^1_0(\Omega)$ to $H^{-1}(\Omega;\R^2)$, and 
\begin{equation}
\forall u,v \in V^1_0(\Omega), \quad \|(u \cdot \nabla) v\|_{H^{-1}}
    \le C\|u\|_{L^2} \|v\|_{H^1}. 
\end{equation}

Here $B=(f_1,\dots,f_m)$, hence from Remark \ref{rem:NL-sufficient}, we deduce that for any control operator such that $B \in D_{-1/2}(A)^m$ (for instance if $B \in H^{-1}(\Omega;\R^2)^{\textcolor{black}{m}}$), $\mathcal{F}$ satisfies Assumption \ref{item:F1}.

% \textcolor{red}{
% Recall that the dynamical system given by
% \begin{equation}
%     \begin{cases}
%         \partial_t u - Au + \mathcal{F}(u) = 0, \\
%         u(0) = u_0 \in H,
%     \end{cases}
% \end{equation} 
% is equivalent to one governed by the Navier-Stokes equations \eqref{eq:Navier-Stokes}, for more details see \cite[Section 5]{badra2011stabilization}.}

Now for the sake of doing a simple illustration of Theorem \ref{thm:nonlinear-stab}, we will assume that $\Omega$ is such that the Stokes operator has a simple spectrum (from \cite{OrtegaZuazua2001}, we know it is generically the case). Hence
we only need one forcing term as a control operator, which we denote by \(f \in H^{-1}(\Omega;\R^2)\). We will assume that \((A,f)\) is approximately controllable, and thanks to Lemma \ref{lem:Fattorini}, if we denote by $(e_n)_{n\geq 1}$ the eigenfunctions of \(A\), this is equivalent to the following condition
\begin{equation}\label{eq:approxNS}
    \forall n \geq 1, \quad \langle f, e_n \rangle_{H^{-1},H^1} \neq 0.
\end{equation}
The next proposition summarizes the stabilization result in the described setting.
\begin{prop}
\textcolor{black}{Let $\Omega$ be a smooth bounded domain of $\mathbb{R}^{2}$ such that} the Stokes operator has a simple spectrum. Let $f\in H^{-1}(\Omega;\R^2)$ 
\textcolor{black}{such that}
%be forcing term 
%seen as a control operator such that
$(A,f)$ is approximately controllable. (i.e. \eqref{eq:approxNS} holds)
%\todo[author=Vincent]{j'ai changé l'équation car j'ai l'impression que c'est plus celle-ci que tu voulais mettre, mais je me trompe peut-être !} 
%Amaury: tout à fait!
Then for all $\lambda>0$, there exists a feedback operator
 $K \in \mathcal{L}(L^2_{\sigma}(\Omega),\R)$ and $\delta>0$, such that for every $u_0 \in L^2_{\sigma}(\Omega)$ with $\|u_0\|_{L^2} \leq \delta$, there exists a unique solution \(u \in C^0_b([0, +\infty); L^2_{\sigma}(\Omega)) \cap L^2((0, +\infty); V^1_0(\Omega))\) to
\begin{equation}\label{eq:Navier-Stokes-stab}
    \begin{cases}
        \partial_t u - \P \Delta u + (u \cdot \nabla) u = (K u)f, \\
        u(0) = u_0.
    \end{cases}
\end{equation} 
Moreover, there exists a constant \(C > 0\) (independent of $u_0$) such that
\begin{equation}
    \forall t\geq 0, \quad \|u(t)\|_{L^2} \leq C e^{-{\lambda}t}\|u_0\|_{L^2}.
\end{equation}
\end{prop}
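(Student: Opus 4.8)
The plan is to recognize this proposition as a direct application of Theorem \ref{thm:nonlinear-stab} to the Navier–Stokes setting, so that the bulk of the work is checking that the three structural hypotheses of that theorem hold and then translating the conclusion back into the fluid-dynamical spaces. The groundwork has already been laid above: $A=\P\Delta$ with $D(A)=V_0^2(\Omega)$ is a self-adjoint diagonal parabolic operator with negative eigenvalues, and the scale of generalized Sobolev spaces satisfies $D_{1/2}(A)=V_0^1(\Omega)$ together with the embedding $H^{-1}(\Omega;\R^2)\hookrightarrow D_{-1/2}(A)$ through the extended Leray projection. First I would rewrite \eqref{eq:Navier-Stokes-stab} in the abstract form \eqref{eq:nonlinear-system}: since the equation reads $\partial_t u-\P\Delta u+(u\cdot\nabla)u=(Ku)f$, setting $B=f$ and $w=Ku$ gives $\partial_t u=(A+BK)u+\mathcal{F}(u)$ with $\mathcal{F}(u)=-(u\cdot\nabla)u$. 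Because Assumption \ref{item:F1} constrains only norms, the sign is irrelevant and $\mathcal{F}$ satisfies it exactly when the bilinear map $(u,v)\mapsto(u\cdot\nabla)v$ satisfies the bound recalled above.

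Next I would verify the $F_\lambda$-admissibility of $f$. The hypothesis that the Stokes operator has simple spectrum forces $m(\lambda)=1$ for every $\lambda>0$, so the partition of Lemma \ref{lem:decomp1} collapses to a single block and condition \hyperref[hyp:B]{$(H_{B})$} reduces to the single requirement $f\in D(A)'$ together with $\scalp{f}{e_n}_{H^{-1},H^1}\neq 0$ for all $n\le N(\lambda)$. Since $(A,f)$ is approximately controllable, Lemma \ref{lem:Fattorini} (the generalized Fattorini criterion) yields $\scalp{f}{e_n}_{H^{-1},H^1}\neq 0$ for \emph{every} $n\ge 1$, which in particular covers the finitely many low frequencies $n\le N(\lambda)$. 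Hence $f$ is $F_\lambda$-admissible, and $f\in H^{-1}(\Omega;\R^2)\subset D_{-1/2}(A)$ places us in the regime $s=1/2$, so that $\gamma=\min(1-s,1/2)=1/2$.

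Then I would confirm Assumption \ref{item:F1}. Writing $\mathcal{F}(u)=-B(u,u)$ with $B(u,v)=(u\cdot\nabla)v$, the estimate $\|(u\cdot\nabla)v\|_{H^{-1}}\le C\|u\|_{L^2}\|v\|_{H^1}$ is precisely \eqref{eq:FH1B} with $\gamma=1/2$, after using $H^{-1}\hookrightarrow D_{-1/2}(A)$, $\|u\|_H=\|u\|_{L^2}$ and $\|v\|_{D_{1/2}(A)}\sim\|v\|_{H^1}$. As $B$ is not symmetric, Remark \ref{rem:NL-sufficient} applies and delivers Assumption \ref{item:F1} with $\Phi=\mathrm{Id}_\R$ and $K=C$. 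With all three hypotheses secured, I would invoke Theorem \ref{thm:nonlinear-stab}: it produces, via the $F$-equivalence pair $(T,K)$ of Theorem \ref{thm:main}, a feedback $K\in\mathcal{L}(H,\R)=\mathcal{L}(L^2_\sigma(\Omega),\R)$ and a threshold $\delta>0$ such that, for $\|u_0\|_{L^2}\le\delta$, the abstract system has a unique solution in $C^0_b([0,+\infty);H)\cap L^2((0,+\infty);D_{1/2}(A))$ satisfying $\|u(t)\|_H\le Ce^{-\lambda t}\|u_0\|_H$ with $C=\|T\|_{\mathcal{L}(H)}\|T^{-1}\|_{\mathcal{L}(H)}$. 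Substituting $H=L^2_\sigma(\Omega)$ and $D_{1/2}(A)=V_0^1(\Omega)$ reproduces verbatim the stated regularity class and decay estimate.

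Since this is in essence a corollary, I do not expect a deep analytic obstacle; the delicate points are bookkeeping. The step I would watch most carefully is the reduction of \hyperref[hyp:B]{$(H_{B})$} to the Fattorini condition: one must check that approximate controllability of $(A,f)$ — a nondegeneracy over \emph{all} eigenfunctions — is genuinely stronger than the finite-frequency nondegeneracy required for $F_\lambda$-admissibility, and that simplicity of the spectrum is exactly what makes a single scalar control ($m=m(\lambda)=1$) suffice. A secondary concern is keeping the embedding $H^{-1}(\Omega;\R^2)\hookrightarrow D_{-1/2}(A)$ used consistently, so that the bilinear estimate really lands in $D_{-1/2}(A)$ and pins down $s=1/2$, hence $\gamma=1/2$.
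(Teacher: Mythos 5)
Your proposal is correct and follows essentially the same route as the paper: the proposition is obtained as a direct corollary of Theorem \ref{thm:nonlinear-stab}, after checking that the Stokes operator is diagonal parabolic, that simplicity of the spectrum gives $m(\lambda)=1$ so that approximate controllability (via Lemma \ref{lem:Fattorini}) implies $F_\lambda$-admissibility of $f$, and that the bilinear estimate $\|(u\cdot\nabla)v\|_{H^{-1}}\le C\|u\|_{L^2}\|v\|_{H^1}$ together with Remark \ref{rem:NL-sufficient} yields Assumption \ref{item:F1} with $\gamma=1/2$. The bookkeeping points you flag (identifying $H^{-1}(\Omega;\R^2)$ with a subspace of $D_{-1/2}(A)$ via the extended Leray projection, and $D_{1/2}(A)=V_0^1(\Omega)$) are exactly the ones the paper handles in the surrounding discussion.
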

\begin{remark}
    Note that again, the strength of Assumption \ref{item:F1} described in Remark \ref{rem:NL-novelty} allows us to have irregular forcing terms, without any additional works.
\end{remark}
\subsection{Quasilinear heat equation}\label{subsec:quasilinear-heat}
Let $\Omega$ be a smooth connected bounded open subset of $\R^d$, with $d=2$.\footnote{We choose $d=2$ to have sharp assumptions on $f$ and $D$.} In this subsection we consider a general form of quasilinear heat equation given by
%\todo[author=Amaury]{Un point très mineur de forme: on utilise $\nabla \cdot$ pour div avant et div maintenant. Ce n'est peut-être pas un souci et on peut laisser ainsi mais on aura peut-être la remarque}
%\todo[author=Vincent]{Ah oui, j'avoue que j'ai juste suivis les conventions des différents papiers, si jamais tu penses qu'il faut uniformiser, je peux le faire ! }
%Amaury: Laissons comme cela, il faut bien que le rapporteur ait quelque chose à dire.
\begin{equation}\label{eq:quasilinear-heat-control}
\left\{
\begin{aligned}
    \partial_t u  - \div(D(u)\nabla u) &=  f(u) + w(t) b
        && \text{in } \Omega \times (0,+\infty)  ,\\
        D(0) \nabla u \cdot n &= 0 && \text{on } \partial \Omega \times (0,+\infty),\\
    u(0, \cdot) &= u_0(\cdot) 
        && \text{in } \Omega.
\end{aligned}
\right.
\end{equation}
We consider here a classical quasilinear diffusion model. While the literature on feedback stabilization for semilinear parabolic PDEs is extensive, 
%see for instance \cite{barbu2003feedback,casas2017stabilization}
results for quasilinear dynamics are comparatively scarce, see, for instance, the recent 1D stabilization result for a quasilinear heat equation \cite{BelhadjoudjaMaghenemWitrantKrstic_2025_arXiv250510935}. In order to keep the presentation simple and the feedback fully explicit (proportional to the spatial average), we restrict ourselves to a single scalar control. This already provides exponential stabilization under the assumptions below. Of course, if one needs rapid stabilization, the approach readily extends by adding more \textcolor{black}{controls}, which allows one to accelerate the decay rate at will.

We set $\widetilde{D} = D - D(0)$ and $\widetilde{f}(u)=f(u)-f'(0)u$. Let $\varepsilon \in (0,1/2)$ and set $s= d/2 +\varepsilon$, \textcolor{black}{we make the following assumptions} on $D$ and $f$:
%[label=(H\arabic*)]
\begin{enumerate}[label=(D\arabic*)]
    \item \label{item:D1} We have $D \in C^{2,1}_{\text{loc}}(\R;\R^{d \times d})$ with $D(0)$ symmetric positive definite.
    \item \label{item:D2} We have $f \in C^{1,1}_{\text{loc}}(\R)$ with $f(0)=0$.
    \item  \label{item:D3} We have $b \in H^{\varepsilon}(\Omega)$ and $\scalp{b}{1}_{L^2} \not = 0$.
\end{enumerate}
\textcolor{black}{Here} $C^{1,1}_{\text{loc}}$ means continuously differentiable with locally Lipschitz derivative \textcolor{black}{and $C^{2,1}_{\text{loc}}$ is defined similarly.}
Now we express \eqref{eq:quasilinear-heat-control} in our framework. First\textcolor{black}{,} to define $A$, we define the intermediate operator $\textcolor{black}{A_{0}}=\div(D(0)\nabla(\cdot))$ on $L^2(\Omega)$ with the usual Neumann boundary condition domain. 
It is well known (see \cite[Chapter 9]{Brezis2010}) that $A_0$ is self-adjoint and diagonal parabolic on $L^2(\Omega)$. We denote by $(\lambda_n')_{n\geq 0}$ its eigenvalues and $(e_n')_{n\geq 0}$ an associated orthonormal basis of eigenvectors, notice that $\lambda_0' = 0$ and $e_0' = 1/{\sqrt{|\Omega|}}$. We will work with the state space $H= D_s(\textcolor{black}{A_{0}}) = H^{s}(\Omega)$ \textcolor{black}{(note that,} as $\varepsilon<1/2$, $s<\frac{3}{2}$ and there is no Neumann boundary condition to add). %so $D_s(A)=H^s(\Omega)$).
\textcolor{black}{W}e define $A$, as the operator with domain $D(A) = D_{s+1}(\textcolor{black}{A_{0}})$ \textcolor{black}{th}at act as follows
\begin{equation}
   \forall u \in D(A), \quad  Au=\textcolor{black}{A_{0}}u+f'(0)u.
\end{equation}
Hence $A$ is self-adjoint and diagonal parabolic on $H$, and its eigenvalues are $\lambda_n = \lambda_n' + f'(0)$, associated to the eigenvectors $e_n=e_n'$, for $n\geq 0$. We assume that $f'(0) \geq 0$, \textcolor{black}{
    %otherwise the system is already exponentially stable. 
    such that the uncontrolled system is not exponentially stable.}

Now, here $B=b$ and by \ref{item:D3} we \textcolor{black}{have} $B \in H^{\varepsilon}(\Omega)=D_{-1/2}(A)$\textcolor{black}{. 
Here}
%Hence,} 
%to respect Assumption \ref{item:F1}, 
$\mathcal{F}$ is defined 
%as
%must be defined 
on $D_{1/2}(A)=\{ u \in H^{s+1}(\Omega) \ | \ D(0)\nabla u \cdot n = 0 \text{ in } \partial \Omega \}$,
by%\textcolor{blue}{take value in} $H^{\varepsilon}(\Omega)$, and \textcolor{blue}{act} as follows on $u \in D_{1/2}(A)$
\begin{equation}\label{eq:ql-nl-defF}
    \mathcal{F}(u) = \div(\widetilde{D}(u)\nabla u) + \widetilde{f}(u).
\end{equation}
\textcolor{black}{
%It remains to show that $\mathcal{F}$ satisfies Assumption \ref{item:F1}, and in particular that $\mathcal{F}(D_{1/2}(A))\subset H^{\varepsilon}(\Omega)$. 
We have the following Lemma, shown in Section \ref{sec:QHE}:
    \begin{lem}
    \label{lem:FQHE}
    $\mathcal{F}$ satisfies Assumption \ref{item:F1}.
    \end{lem}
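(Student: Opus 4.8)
The plan is to split $\mathcal{F}=\mathcal{F}_1+\mathcal{F}_2$ with $\mathcal{F}_1(u)=\div(\widetilde{D}(u)\nabla u)$ and $\mathcal{F}_2(u)=\widetilde{f}(u)$, and to verify \eqref{eq:FH1}--\eqref{eq:FH2} for each piece, using throughout the identifications $H=H^s(\Omega)$, $D_{1/2}(A)=H^{s+1}(\Omega)$ (with the Neumann-type boundary condition) and $D_{-1/2}(A)=H^{s-1}(\Omega)=H^{\varepsilon}(\Omega)$. Since $B=b\in H^{\varepsilon}=D_{-1/2}(A)$, the parameter appearing in Assumption \ref{item:F1} equals $1/2$, so $\gamma=1/2$ and $D_{\gamma}(A)=H^{s+1}(\Omega)$. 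The two analytic facts I will use repeatedly are that $s=1+\varepsilon>d/2=1$, so $H^{s}(\Omega)$ is a Banach algebra with $H^{s}\hookrightarrow L^{\infty}$, and that $\div$ maps $H^{s}$ boundedly into $H^{s-1}$. All estimates below are taken over the ball $\|u\|_{H^{s}},\|v\|_{H^{s}}\le\eta$, so Sobolev embedding keeps $u,v$ in a fixed ball of $L^{\infty}$ on which the local regularity of $D$ and $f$ from \ref{item:D1}--\ref{item:D2} applies.

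For the diffusion term I would work entirely in $H^{s}$. Using $\div:H^{s}\to H^{s-1}$ and the algebra property,
\[
\|\mathcal{F}_1(u)\|_{H^{s-1}}\le C\|\widetilde{D}(u)\nabla u\|_{H^{s}}\le C\|\widetilde{D}(u)\|_{H^{s}}\,\|u\|_{H^{s+1}}.
\]
Since $\widetilde{D}\in C^{2,1}_{\mathrm{loc}}$ with $\widetilde{D}(0)=0$, the Nemytskii operator $u\mapsto\widetilde{D}(u)$ sends bounded sets of $H^{s}$ into $H^{s}$ (composition in $H^{1+\varepsilon}$ needs $C^{1,\varepsilon}$, which is implied), and $\Phi_1(\rho):=\sup_{\|u\|_{H^{s}}\le\rho}\|\widetilde{D}(u)\|_{H^{s}}$ is nondecreasing, continuous at $0$ and satisfies $\Phi_1(0)=0$ because $\widetilde{D}(0)=0$. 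This yields \eqref{eq:FH1} for $\mathcal{F}_1$. For the difference I would write $\widetilde{D}(u)\nabla u-\widetilde{D}(v)\nabla v=\widetilde{D}(u)\nabla(u-v)+(\widetilde{D}(u)-\widetilde{D}(v))\nabla v$, so that
\[
\|\mathcal{F}_1(u)-\mathcal{F}_1(v)\|_{H^{s-1}}\le C\Phi_1(\|u\|_{H^{s}})\,\|u-v\|_{H^{s+1}}+C\,\|\widetilde{D}(u)-\widetilde{D}(v)\|_{H^{s}}\,\|v\|_{H^{s+1}}.
\]
The first term has exactly the shape of the second summand of \eqref{eq:FH2}; for the second I need the Lipschitz bound $\|\widetilde{D}(u)-\widetilde{D}(v)\|_{H^{s}}\le C(\eta)\|u-v\|_{H^{s}}$, obtained from $\widetilde{D}(u)-\widetilde{D}(v)=(u-v)\int_0^1\widetilde{D}'(v+t(u-v))\,dt$ together with the algebra property and the fact that $\int_0^1\widetilde{D}'(\cdots)\,dt\in H^{s}$. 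This last step is precisely where the extra regularity $\widetilde{D}\in C^{2,1}$ (one derivative beyond what boundedness required) is used, and it produces the first summand of \eqref{eq:FH2}.

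For the reaction term I cannot push $\widetilde{f}(u)$ to $H^{s}$, since $\widetilde{f}\in C^{1,1}$ only gives $\widetilde{f}'\in C^{0,1}$; instead I estimate directly in $H^{\varepsilon}$ with $0<\varepsilon<1$, where the fractional Leibniz rule $\|ab\|_{H^{\varepsilon}}\le C(\|a\|_{L^{\infty}}\|b\|_{H^{\varepsilon}}+\|a\|_{H^{\varepsilon}}\|b\|_{L^{\infty}})$ and the composition bound $\|G(w)\|_{H^{\varepsilon}}\le C\,\mathrm{Lip}(G)\,\|w\|_{H^{\varepsilon}}$ (for Lipschitz $G$ with $G(0)=0$) are available. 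Writing $\widetilde{f}(u)=u\int_0^1\widetilde{f}'(tu)\,dt$ and using $\widetilde{f}'(0)=0$ to make the integrand small, one gets $\|\mathcal{F}_2(u)\|_{H^{\varepsilon}}\le C\|u\|_{H^{s}}^{2}\le C\|u\|_{H^{s}}\|u\|_{H^{s+1}}$, i.e. \eqref{eq:FH1} with a linear $\Phi$. Similarly $\widetilde{f}(u)-\widetilde{f}(v)=(u-v)\int_0^1\widetilde{f}'(v+t(u-v))\,dt$ gives $\|\mathcal{F}_2(u)-\mathcal{F}_2(v)\|_{H^{\varepsilon}}\le C(\|u\|_{H^{s}}+\|v\|_{H^{s}})\,\|u-v\|_{H^{s}}$, which fits into the second summand of \eqref{eq:FH2} after the crude bound $\|u-v\|_{H^{s}}\le\|u-v\|_{H^{s+1}}$. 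Collecting the two pieces, I would take $\Phi$ to be the maximum of the moduli produced above and $K$ the maximum of the constants (all finite on $\|\cdot\|_{H^{s}}\le\eta$); continuity of $\Phi$ at $0$ and $\Phi(0)=0$ follow from $\widetilde{D}(0)=0$ and $\widetilde{f}(0)=\widetilde{f}'(0)=0$.

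The main obstacle is the difference estimate \eqref{eq:FH2}, and within it two points deserve care: first, correctly distributing the top-order norm $\|\cdot\|_{H^{s+1}}$ between the two factors in each difference so that the result lands in the prescribed split (the weak factor always carrying the $H^{s}$ norm, the strong one the $H^{s+1}$ norm); second, the fact that $f$ is only $C^{1,1}$ forces the reaction term to be handled in $H^{\varepsilon}$ rather than $H^{s}$, which is exactly what makes the resulting modulus $\Phi$ merely continuous (not Lipschitz) and is the reason the flexible form of Assumption \ref{item:F1} is needed here rather than the Lipschitz-modulus hypotheses of \cite{badra2011stabilization,badra2014fattorini}.
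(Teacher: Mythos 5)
Your overall strategy coincides with the paper's: the same splitting of $\mathcal{F}$ into the diffusion part $\div(\widetilde D(u)\nabla u)$ and the reaction part $\widetilde f(u)$, the same identifications $H=H^{s}(\Omega)$, $D_{1/2}(A)=H^{s+1}(\Omega)$, $D_{-1/2}(A)=H^{s-1}(\Omega)=H^{\varepsilon}(\Omega)$, the algebra property of $H^{s}$ for $s>d/2$, and the correct observation that the mere $C^{1,1}$ regularity of $f$ is what forces the non-Lipschitz modulus $\Phi$. Your treatment of the reaction term is essentially the paper's Lemma \ref{lem:QL-NL-f}.

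There is, however, one genuine gap, and it sits exactly at the step you single out as crucial. To obtain $\|\widetilde D(u)-\widetilde D(v)\|_{H^{s}}\le C(\eta)\|u-v\|_{H^{s}}$ you use the integral representation together with ``the fact that $\int_0^1\widetilde D'(v+t(u-v))\,dt\in H^{s}$''. This is not a fact under \ref{item:D1}: composing a $C^{1,1}_{\mathrm{loc}}$ function with an element of $H^{1+\varepsilon}(\Omega)$ (here $d=2$, $s=1+\varepsilon$) does not in general land in $H^{1+\varepsilon}$, only in $H^{1+\varepsilon-\delta}$ for every $\delta>0$. The obstruction is an endpoint product estimate: for $w\in H^{s}\hookrightarrow C^{0,\varepsilon}(\overline\Omega)$ one only gets $\widetilde D''(w)\in C^{0,\varepsilon}$, and the bilinear map $C^{0,\varepsilon}\times H^{\varepsilon}\to H^{\varepsilon}$ fails (the corresponding Gagliardo integral diverges logarithmically); this is precisely why the paper's Lemma \ref{lem:QL-NL-f} concedes an $\varepsilon/2$ loss and yields $g(u)\in H^{s-\varepsilon/2}$ for $g\in C^{1,1}_{\mathrm{loc}}$. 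The same misconception appears in your parenthetical claim that composition in $H^{1+\varepsilon}$ ``needs $C^{1,\varepsilon}$''. The Lipschitz bound you want is nevertheless true, but it must be proved by differentiating the difference, $\nabla(\widetilde D(u)-\widetilde D(v))=\widetilde D'(u)\nabla(u-v)+(\widetilde D'(u)-\widetilde D'(v))\nabla v$, and then using that $\widetilde D'(u),\widetilde D'(v)\in H^{s-\varepsilon/2}$ with $s-\varepsilon/2>d/2$ (Lemma \ref{lem:QL-NL-f} applied to $\widetilde D'\in C^{1,1}_{\mathrm{loc}}$) together with the multiplication $H^{s-\varepsilon/2}\times H^{\varepsilon}\to H^{\varepsilon}$. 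This is exactly the mechanism of the paper's Lemma \ref{lem:QL-NL-D}, and it is the only place where the full $C^{2,1}$ regularity of $D$ is genuinely exploited. With this repair your argument closes and is equivalent to the paper's, which routes the difference estimate for the diffusion term through Remark \ref{rem:NL-sufficient} but ultimately rests on the same Lipschitz estimate for $u\mapsto\widetilde D(u)$ in $H^{s}$.
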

%This is .
} Now we define the following feedback for $\mu>0$
    \begin{equation}\label{eq:ql-nl-fb}
        \forall u \in H, \quad Ku = -\frac{\mu}{\scalp{b}{1}_{L^2}} {\int_\Omega u}.
    \end{equation}
    We are now able to sta\textcolor{black}{te} 
    %and prove 
    the main result of this subsection.
    \begin{prop}\label{prop:ql-main}
        Assume that $0\leq f'(0) < |\lambda_1|$ and let $\lambda \in (0, |\lambda_1| - f'(0))$. Then, there exists 
        $\delta >0$ such that for every $u_0 \in H^s(\Omega)$ with $\|u_0\|_{H^s} \leq \delta$, there exists a unique solution $u \in C^0_b([0,+\infty);H^s(\Omega)) \cap L^2((0,+\infty);H^{s+1}(\Omega))$ to
\begin{equation}\label{eq:ql-eq-main}
\left\{
\begin{aligned}
    \partial_t u  - \div(D(u)\nabla u) &=  f(u) + K(u) b
        && \text{in } \Omega \times (0,+\infty)  ,\\
        D(0) \nabla u \cdot n &= 0 && \text{on } \partial \Omega \times (0,+\infty),\\
    u(0, \cdot) &= u_0(\cdot) 
        && \text{in } \Omega,
\end{aligned}
\right.
\end{equation}
where $K$ is defined in \eqref{eq:ql-nl-fb} with a $\mu \geq \lambda + f'(0)$. Moreover, it is exponentially stable, meaning that there exists a constant $C > 0$ such that
\begin{equation}
    \forall t\geq 0, \quad \|u(t)\|_{H^s} \leq C e^{-\lambda t} \|u_0\|_{H^s}.
\end{equation}
    \end{prop}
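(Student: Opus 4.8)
The plan is to obtain Proposition \ref{prop:ql-main} as a direct application of Theorem \ref{thm:nonlinear-stab} to the abstract system $\partial_t u = (A+BK)u + \mathcal{F}(u)$ constructed above, with $B = b$, $\mathcal{F}$ as in \eqref{eq:ql-nl-defF}, and the scalar feedback \eqref{eq:ql-nl-fb}. Three hypotheses of that theorem must be checked. That $A$ is diagonal parabolic has already been established, since $A = A_0 + f'(0)$ is self-adjoint with the same eigenvectors as $A_0$ and eigenvalues $\lambda_n = \lambda_n' + f'(0) \to -\infty$. That $B$ has admissible regularity follows from \ref{item:D3}, which gives $b \in H^\varepsilon(\Omega) = D_{-1/2}(A)$, so Theorem \ref{thm:nonlinear-stab} applies with $\gamma = \min(1-\tfrac12,\tfrac12) = \tfrac12$. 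Finally, that $\mathcal{F}$ satisfies Assumption \ref{item:F1} is precisely Lemma \ref{lem:FQHE}, which I grant. It then remains to verify $F_\lambda$-admissibility of $b$ and to identify the feedback produced by Theorem \ref{thm:main} with \eqref{eq:ql-nl-fb}.

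First I would pin down the low-frequency space. As $\Omega$ is connected, $\lambda_0' = 0$ is simple with $e_0 = 1/\sqrt{|\Omega|}$, while $\lambda_1' < 0$. The assumption $0 \le f'(0) < |\lambda_1|$ forces $\lambda_1 = \lambda_1' + f'(0) < 0$, and for $\lambda \in (0,|\lambda_1| - f'(0))$ one has $\lambda < |\lambda_1|$, hence $\Re(\lambda_1) = \lambda_1 < -\lambda$, whereas $\lambda_0 = f'(0) \ge 0 > -\lambda$. Thus $L_\lambda = \mathrm{span}(e_0)$, so $N(\lambda) = 1$ and $m(\lambda) = 1$: a single scalar control is both necessary and sufficient, consistent with $B = b$. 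The admissibility condition \hyperref[hyp:B]{$(H_{B})$} then reduces to the single requirement $\scalp{b}{e_0}_{D(A)',D(A)} \neq 0$; since $e_0 = 1/\sqrt{|\Omega|}$, this equals $\scalp{b}{1}_{L^2}/\sqrt{|\Omega|}$, which is nonzero by \ref{item:D3}. Hence $b$ is $F_\lambda$-admissible.

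Next I would apply Theorem \ref{thm:main} with target $D = (A_L - \mu)P_L + A_H P_H$ for $\mu \ge \lambda + c_A$; here $c_A = \max(0,m_A) = f'(0)$, so the admissible range is $\mu \ge \lambda + f'(0)$, matching the statement. Because $L_\lambda$ is one-dimensional, the $F$-equivalence system of Definition \ref{defn:df-eq} collapses to a scalar problem: writing $b_L = P_L b = \beta e_0$ with $\beta = \scalp{b}{1}_{L^2}/\sqrt{|\Omega|} \neq 0$, the constraint $TB = B$ forces $T$ to act as the identity on $L_\lambda$, and $T(A_L + b_L K) = (A_L - \mu)T$ on $L_\lambda$ then gives $\beta\,(K e_0) = -\mu$, i.e. $K = -\tfrac{\mu}{\beta}\scalp{\cdot}{e_0}$. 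Unwinding $e_0 = 1/\sqrt{|\Omega|}$ recovers exactly $Ku = -\tfrac{\mu}{\scalp{b}{1}_{L^2}}\int_\Omega u$ of \eqref{eq:ql-nl-fb}; being explicit, this scalar solution is invertible for every $\mu \ge \lambda + f'(0)$, so no ``almost every $\mu$'' restriction is needed in this case. With $(T,K)$ in hand, Theorem \ref{thm:nonlinear-stab} produces $\delta > 0$, local well-posedness of \eqref{eq:ql-eq-main} in $C^0_b([0,+\infty);H) \cap L^2((0,+\infty);D_{1/2}(A))$---which by the scale identification $D_{1/2}(A) = H^{s+1}(\Omega)$ of this subsection is the stated space---and the bound $\|u(t)\|_{H^s} \le C e^{-\lambda t}\|u_0\|_{H^s}$ with $C = \|T\|_{\mathcal{L}(H)}\|T^{-1}\|_{\mathcal{L}(H)}$.

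The substantial work lies outside the proposition itself, in Lemma \ref{lem:FQHE}: checking Assumption \ref{item:F1} for the genuinely quasilinear term $\div(\widetilde{D}(u)\nabla u)$. The main obstacle there is that, unlike a semilinear perturbation, $\mathcal{F}$ maps $D_{1/2}(A) = H^{s+1}(\Omega)$ into $D_{-1/2}(A) = H^{s-1}(\Omega)$, i.e. with the same order as $A$, so \eqref{eq:FH1}--\eqref{eq:FH2} must be extracted from the product and composition structure in $H^s(\Omega)$ (using $s = d/2 + \varepsilon > d/2$, so that $H^s$ is a Banach algebra) together with the local regularity \ref{item:D1}--\ref{item:D2} of $D$ and $f$; the freedom to take a possibly non-Lipschitz modulus $\Phi$ rather than $\Phi = \mathrm{Id}$ is precisely what makes \eqref{eq:FH1} attainable here. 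Granting that lemma, the proposition follows by the assembly above.
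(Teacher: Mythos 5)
Your proposal is correct and follows essentially the same route as the paper: identify $L_\lambda=\mathrm{span}(e_0)$ and $m(\lambda)=1$, check $F_\lambda$-admissibility from \ref{item:D3}, invoke Lemma \ref{lem:FQHE} and Theorem \ref{thm:main} to get the $F$-equivalence with the explicit scalar feedback \eqref{eq:ql-nl-fb}, and conclude by Theorem \ref{thm:nonlinear-stab}. The one inaccuracy is your side remark that no ``almost every $\mu$'' restriction is needed: the explicitness of the low-frequency block only shows $\tilde T$ and $K$ exist for every $\mu$, but the invertibility of the high-frequency component $C$ of $T$ (the diagonal coefficients $c_k^\mu=1-\mu/(\lambda_0-\lambda_k)$, which vanish at $\mu=|\lambda_k'|$, values that can lie in $[\lambda+f'(0),+\infty)$) still forces the exclusion of a countable set of $\mu$; this does not affect the proposition, which only requires some admissible $\mu$, and the paper's own proof likewise takes ``some $\mu\geq\lambda+f'(0)$''.
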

    \begin{proof}
    
        Here we have $L_\lambda = \text{span} \{e_0\}$ and $m(\lambda)=1$ as $\lambda_0$ is simple. Hence \ref{item:D3} ensures that $B$ is $F_\lambda$-admissible \textcolor{black}{(note that $\langle B,1 \rangle_{H^{s}}$ is directly deduced from $\langle B,1 \rangle_{L^{2}}$)}. Hence, \textcolor{black}{from Lemma \ref{lem:FQHE},} there exists an $F$-equivalence by Theorem \ref{thm:main}, and projecting $T(A+BK)=DT$ on $e_0$, gives
        \begin{equation}
            K(u) \scalp{B}{e_0}_{H^s} = -\mu\scalp{u}{e_0}_{H^s}.
        \end{equation}
        Hence the $F$-equivalence feedback is given by $K$ defined in \eqref{eq:ql-nl-fb} with some $\mu \geq \lambda + f'(0)$.
        %\todo[author=Amaury]{Je pense que ça vaudrait le coup de rajouter un peu de détail pour justifier la forme de $K$ ici. On peut peut-être citer des équations de la preuve?}.
        Then it suffices to apply Theorem \ref{thm:nonlinear-stab} to ensure local well-posedness and exponential stability.
    \end{proof}

\section{Main proofs}\label{sec:MainProof} 
\subsection{Existence of parabolic $F$-equivalence}\label{subsec:feq-mainproof} In this subsection we prove Theorem \ref{thm:main}.
\subsubsection{Proof strategy} 
Let \(A\), \(\lambda\), \(D\), and \(B\) be as in Theorem \ref{thm:main}. In particular, our frequency decomposition of spaces are always with respect to \(\lambda\). Below, we briefly outline the proof steps for Theorem \ref{thm:main}:

\begin{enumerate}
    \item We begin by establishing necessary conditions on the form of \((T, K)\) for it to be a parabolic \(F\)-equivalence of \((A, B, D)\), as detailed in Proposition \ref{prop:partial-uniqueness}.
    
    \item In Subsection \ref{subsec:simple}, we first apply Lemma \ref{lem:decomp1} using the partition induced by the \(F_{\lambda}\)-admissibility of \(B\). Hence, for each \(j \in \{1, \dots, m(\lambda)\}\), we have \(A_j\), a diagonal parabolic operator on \(\mathcal{H}^j\), with \(B_j \in D(A_j)'\). Exploiting the fact that \(A_j\) has only simple eigenvalues \textcolor{black}{(see, for instance, \cite[(2.10)]{CoronLu15})} in $L_\lambda^j$, we show in Proposition \ref{prop:simple-feq} that for almost every \(\mu \geq \lambda + c_A\) and for all \(j \in \{1, \dots, m(\lambda)\}\), there exists a parabolic \(F\)-equivalence \((T_j, K_j)\) between \((A_j, B_j)\) and \(D_j\), where (\textcolor{black}{${P_j}_L$ and ${P_j}_H$ are the orthogonal projections on $L_\lambda^j$ and $\mathcal{H}_\lambda^j$ in $\mathcal{H}^j$})
\textcolor{black}{\begin{equation}
    D_j = ({A_j}_L - \mu) {P_j}_L + {A_j}_H {P_j}_H.
    %D_j = \begin{pmatrix}
    %    {A_j}_L - \mu & 0 \\
    %    0 & {A_j}_H
    %\end{pmatrix}.
\end{equation}}
    The core of the proof lies in this step, with the main technical challenge being to establish that \( T_j \) is an isomorphism. To achieve this, we make essential use of the polynomial properties of finite-dimensional \( F \)-equivalence feedback, see Theorem \ref{thm:ffeq}. Note that \(D_j\) explicitly depends on \(\mu\), but for notational convenience, we do not indicate this dependence explicitly.
    
    \item Finally, in Subsection \ref{subsec:conc}, we prove Theorem \ref{thm:main}. To this end we set
\begin{equation}
    T = T_1 + \dots + T_{m(\lambda)}, \quad K = (K_1, \dots, K_{m(\lambda)}).
\end{equation}
Then we demonstrate that \((T, K)\) indeed forms a parabolic \(F\)-equivalence between \((A, B)\) and \(D\).
\end{enumerate}

\subsubsection{\textcolor{black}{Necessary conditions on $(T,K)$}}

\textcolor{black}{The goal of this subsection is to} show some conditions that \((T, K)\) should satisfy to be a parabolic \(F\)-equivalence between \((A, B)\) and \(D\) \textcolor{black}{(see Proposition \ref{prop:partial-uniqueness})}. This will greatly help us understand the form of \((T, K)\) in Subsection \ref{subsec:simple}, and we will also reuse it in Section \ref{section:wfeq}.\\

\textcolor{black}{First, let us introduce the following notation: w}e denote by \(P_{L}\) and \(P_{H}\) the orthogonal projections on \(L_\lambda\) and \(H_\lambda\) in \(H\), and by \textcolor{black}{a slight} abuse of notation, we use the same symbols for the orthogonal projections on \(L_\lambda\) and \(D(A)'_\lambda\) in \(D(A)'\).

Then for every \(x \in D(A)'\), we have \(x = x_L + x_H\) with \(x_L = P_L x\) and \(x_H = P_H x\). Now, for every normed vector space \(E\), the previous decompositions give us the following decomposition on the space of bounded operators
\[ 
\mathcal{L}(H, E) =\mathcal{L}(L_\lambda, E) \oplus \mathcal{L}(H_\lambda, E). 
\]
This is also true when replacing \(H\) with \(D(A)'\). Notice that $A(L_{\lambda}) \subset L_{\lambda}$ and  that $A(H_\lambda \cap D(A)) \subset H_\lambda$, so similarly we can define $A_L = A P_L$ and $A_H = A P_H$, and we have $A = A_L + A_H$.\\

We now decompose $\mathcal{L}(H, D(A)')$ in terms of frequency. Let $M \in \mathcal{L}(H, D(A)')$. We can write
\begin{equation}
    M = P_{L} M P_{L} + P_{L} M P_{H} + P_{H} M P_{L} + P_{H} M P_{H},
\end{equation}
which allows us to define the direct sum corresponding to the above decomposition
\begin{equation}
    \mathcal{L}(H, D(A)') = LL_\lambda(H, D(A)') \oplus HL_\lambda(H, D(A)') \oplus LH_\lambda(H, D(A)') \oplus HH_\lambda(H, D(A)'), 
    %\footnote{For instance $HL_\lambda(H, D(A)') = \{ M \in \L(H, D(A)') \ | \ M = P_{L}MP_{H}\}$.}
\end{equation}
\textcolor{black}{where, for instance,
\begin{equation}
HL_\lambda(H, D(A)') = \{ M \in \L(H, D(A)') \ | \ M = P_{L}MP_{H}\},
\end{equation}
and $LL_\lambda(H, D(A)')$, $LH_\lambda(H, D(A)') $ and $HH_\lambda(H, D(A)')$ are defined accordingly.}
We could apply the same approach to \(\mathcal{L}(H)\) or $\L({D(A)'}) $  and the subspaces defined earlier. In these cases, this decomposition allows us to use a matrix formalism. For example, if \(M \in \mathcal{L}(H)\), we write 
\begin{equation}
    M = \begin{pmatrix} M_{LL} & M_{HL} \\ 
                        M_{LH} & M_{HH} \end{pmatrix}, 
\end{equation}
with 
\[ 
M_{LL} = P_L M P_L, \quad M_{HL} = P_L M P_H, \quad M_{LH} = P_H M P_L, \quad M_{HH} = P_H M P_H,
\]
and we verify that the usual matrix multiplication rules apply. For instance, for \(x \in H\), we write $
    x = \begin{pmatrix}
    x_L \\ x_H
\end{pmatrix}$, and we have \[ Mx = M_{LL}x_L + M_{HL}x_H + M_{LH}x_L + M_{HH}x_H =\begin{pmatrix} M_{LL} & M_{HL} \\ 
M_{LH} & M_{HH}\end{pmatrix}  \begin{pmatrix}
    x_L \\ x_H
\end{pmatrix} .\]

Notice that we can define \(\mathcal{L}_{H_\lambda}(D(A)'_\lambda)\) as in Proposition \ref{prop:algebra}. The algebra defined in the proposition below arises naturally in parabolic \(F\)-equivalence, as we will see in Sections \ref{sec:MainProof} and \ref{section:wfeq}.

\begin{prop}
 We define the commutant algebra of \(A_H\) as
\begin{equation}\label{eq:commuting-def}
    C(A_H) = \{ M \in \mathcal{L}_{H_\lambda}(D(A)'_\lambda) \ | \ A_H M = M A_H \ \textnormal{in} \ \mathcal{L}(H_\lambda, D(A)'_\lambda) \}.
\end{equation}
It is a sub-Banach algebra of \(\mathcal{L}_{H_\lambda}(D(A)'_\lambda)\).
\end{prop}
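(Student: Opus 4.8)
The plan is to verify the two defining properties of a sub-Banach algebra: that $C(A_H)$ is a subalgebra of $\mathcal{L}_{H_\lambda}(D(A)'_\lambda)$, and that it is closed in norm (hence complete, as a closed subspace of a Banach space). Since $\mathcal{L}_{H_\lambda}(D(A)'_\lambda)$ is already a Banach algebra by the argument of Proposition \ref{prop:algebra} transposed to the high-frequency spaces $H_\lambda$ and $D(A)'_\lambda$, completeness of $C(A_H)$ follows automatically once its closedness is established, and the product estimate $\|MN\|\le\|M\|\,\|N\|$ is inherited. Thus the work reduces to checking stability under the algebra operations and closedness.

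First I would check that $C(A_H)$ is a linear subspace stable under composition. Linearity is immediate from bilinearity of composition: if $A_H M = M A_H$ and $A_H N = N A_H$ in $\mathcal{L}(H_\lambda, D(A)'_\lambda)$, the same identity holds for every linear combination $\alpha M + \beta N$. For stability under products the only subtlety is bookkeeping the spaces, since the commutation relation holds in $\mathcal{L}(H_\lambda, D(A)'_\lambda)$, equating two maps $H_\lambda \to D(A)'_\lambda$ built from the restriction $M_{|H_\lambda}\colon H_\lambda \to H_\lambda$ and from $M\colon D(A)'_\lambda \to D(A)'_\lambda$. Evaluating on an arbitrary $x \in H_\lambda$ and using that $N_{|H_\lambda}$ sends $H_\lambda$ into $H_\lambda$, one may push $A_H$ through $M$ (using $M\in C(A_H)$) and then through $N$ (using $N\in C(A_H)$):
\begin{equation*}
A_H (MN) x = A_H M (Nx) = M A_H (Nx) = M N A_H x = (MN) A_H x,
\end{equation*}
where the second equality applies the commutation for $M$ to the element $Nx \in H_\lambda$, and the third applies the commutation for $N$ to $x$ to write $A_H(Nx) = N A_H x$. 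Hence $A_H(MN) = (MN)A_H$ in $\mathcal{L}(H_\lambda, D(A)'_\lambda)$; as $\mathcal{L}_{H_\lambda}(D(A)'_\lambda)$ is an algebra we already have $MN \in \mathcal{L}_{H_\lambda}(D(A)'_\lambda)$, so $MN \in C(A_H)$.

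Next I would establish closedness by exhibiting $C(A_H)$ as the kernel of a bounded linear map. Define $\Phi\colon \mathcal{L}_{H_\lambda}(D(A)'_\lambda) \to \mathcal{L}(H_\lambda, D(A)'_\lambda)$ by $\Phi(M) = A_H M - M A_H$, so that $C(A_H) = \ker \Phi$. The key observation is that both terms are controlled by the ambient norm: from the definition of $\|\cdot\|_{\mathcal{L}_{H_\lambda}(D(A)'_\lambda)}$ (as in Proposition \ref{prop:algebra}), setting in turn one of the two test variables to zero yields $\|M_{|H_\lambda}\|_{\mathcal{L}(H_\lambda)} \le \|M\|_{\mathcal{L}_{H_\lambda}(D(A)'_\lambda)}$ and $\|M\|_{\mathcal{L}(D(A)'_\lambda)} \le \|M\|_{\mathcal{L}_{H_\lambda}(D(A)'_\lambda)}$. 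Since $A_H \in \mathcal{L}(H_\lambda, D(A)'_\lambda)$ is fixed and bounded, this gives $\|A_H M\| \le \|A_H\|\,\|M_{|H_\lambda}\|_{\mathcal{L}(H_\lambda)}$ and $\|M A_H\| \le \|M\|_{\mathcal{L}(D(A)'_\lambda)}\,\|A_H\|$, so $\Phi$ is bounded and $C(A_H) = \Phi^{-1}(\{0\})$ is closed.

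Finally I would conclude: a closed linear subspace of a Banach space is complete, so $C(A_H)$ is a closed subalgebra of the Banach algebra $\mathcal{L}_{H_\lambda}(D(A)'_\lambda)$ and is therefore itself a Banach algebra; one may also note $\mathrm{Id} \in C(A_H)$, so the unit is retained. I expect no serious obstacle here: the single point demanding care is the space-bookkeeping in the product computation, where one must repeatedly use that the relevant restrictions land back in $H_\lambda$ before invoking the commutation identities, together with the two elementary norm comparisons that make $\Phi$ continuous.
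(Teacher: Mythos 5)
Your proposal is correct and follows essentially the same route as the paper: the closedness argument rests on exactly the two norm comparisons $\|A_H M\|\le\|A_H\|\,\|M_{|H_\lambda}\|_{\mathcal{L}(H_\lambda)}$ and $\|M A_H\|\le\|M\|_{\mathcal{L}(D(A)'_\lambda)}\,\|A_H\|$ that the paper uses in its sequential limit argument, merely repackaged as continuity of the commutator map $\Phi$. The only difference is cosmetic: you spell out the product-stability bookkeeping that the paper dismisses as clear, which is a harmless (and correct) addition.
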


\begin{proof}
It's clear that \(C(A_H)\) is a subalgebra of \(\mathcal{L}_{H_\lambda}(D(A)'_\lambda)\). We just need to show that it is closed.
Let \((M_n)_{n \geq 1}\) be a sequence in \(\mathcal{L}_{H_\lambda}(D(A)'_\lambda)\) such that \(M_n \rightarrow M\) in \(\mathcal{L}_{H_\lambda}(D(A)'_\lambda)\). %\textcolor{red}{
Recall that the norm of $\mathcal{L}_{H_\lambda}(D(A)'_\lambda)$ is equivalent to the following
\begin{equation}
    \forall M \in \mathcal{L}_{H_\lambda}(D(A)'_\lambda), \ |||M||| := \|M\|_{\mathcal{L}(D(A)'_\lambda)} + \|M_{|H_\lambda}\|_{\mathcal{L}(H_\lambda)}.
\end{equation}
We have
\[
\|MA_H - A_HM\|_{\mathcal{L}(H_\lambda, D(A)'_\lambda)} \leq \| MA_H - M_nA_H\|_{\mathcal{L}(H_\lambda, D(A)'_\lambda)} + \| M_nA_H - A_HM\|_{\mathcal{L}(H_\lambda, D(A)'_\lambda)},
\]
for the first term notice that
\[
\| MA_H - M_nA_H\|_{\mathcal{L}(H_\lambda, D(A)'_\lambda)} \leq \|A_H\|_{\mathcal{L}(H_\lambda, D(A)'_\lambda)} \| M - M_n\|_{\mathcal{L}( D(A)'_\lambda)}.
\]
Now using that $M_nA_H=A_H M_n$, we get
\[
\| M_nA_H - A_HM\|_{\mathcal{L}(H_\lambda, D(A)'_\lambda)} \leq 
\|A_H\|_{\mathcal{L}(H_\lambda, D(A)'_\lambda)} \| M - M_n\|_{\mathcal{L}( H_\lambda)}.
\]
Then passing to the limit, we finally get $\|MA_H - A_HM\|_{\mathcal{L}(H_\lambda, D(A)'_\lambda)} = 0$, and hence $M \in C(A_H)$.
%}

\end{proof}
%\textcolor{red}{Introduire ici la partie de la sous-section sur les projections qu'on avait enlevé}

\textcolor{black}{We can now show the following necessary conditions on $(T,K)$:}

\begin{prop}\label{prop:partial-uniqueness}
Let \(\mu \in \R_{> 0} \setminus \{\lambda_l - \lambda_h\}_{h \geq l \geq 1}\), we set
\[ 
D_\mu = \begin{pmatrix}
    A_L - \mu & 0 \\
    0 & A_H
\end{pmatrix}.
\]
Let \((T, K)\) be a parabolic \(F\)-equivalence between \((A, B)\) and \(D\) \textcolor{black}{(h}ence $K \in \L(L_\lambda,\C^{\ml})$\textcolor{black}{)}. Then, if we denote by \((\tilde{T}, \tilde{K})\) the \textcolor{black}{unique} finite-dimensional \(F\)-equivalence between \((A_L, B_L)\) and \(A_L - \mu\) given by Theorem \ref{thm:ffeq},\footnote{We can apply this theorem using the isomorphism between \(\C^{N(\lambda)}\) and \(L_\lambda\), which sends the canonical basis to \((e_n)_{1 \leq n \leq N(\lambda)}\).} we have
\begin{equation}\label{eq:TK-shape}
    T = \begin{pmatrix}
        \tilde{T} & 0 \\
        \tau & C
    \end{pmatrix}, \quad K = \tilde{K},
\end{equation}
with \(C \in C(A_H)\) and \(\tau \in LH_\lambda(D(A)')\) \textcolor{black}{and is defined as}
\begin{equation}
    \forall n \in \{1, \dots, N(\lambda)\}, \ \tau(e_n) = \sum_{k \geq 1} \frac{\scalp{B_H K e_n}{\sqrt{1 + |\lambda_k|^2} e_k}_{D(A)'}}{\lambda_k - \lambda_n} \sqrt{1 + |\lambda_k|^2} e_k.
\end{equation}
Furthermore, we have  \(\tau(L_\lambda) \subset H_\lambda\).

\end{prop}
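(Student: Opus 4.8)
The plan is to express both identities defining the $F$-equivalence in the low/high block formalism introduced above and to extract the shape of $(T,K)$ block by block. Since $(T,K)$ is a \emph{parabolic} $F$-equivalence we have $K=KP_L$, so in block form $K=(K_L,0)$ with $K_L:=K|_{L_\lambda}$, while $A$ is block-diagonal and
\[
BK=\begin{pmatrix} B_LK_L & 0\\ B_HK_L & 0\end{pmatrix},\qquad D=D_\mu=\begin{pmatrix}A_L-\mu & 0\\ 0 & A_H\end{pmatrix}.
\]
Writing $T=\begin{pmatrix} T_{LL} & T_{HL}\\ T_{LH} & T_{HH}\end{pmatrix}$ and multiplying out, the identity $T(A+BK)=DT$ splits into the four block equations
\[
T_{LL}(A_L+B_LK_L)+T_{HL}B_HK_L=(A_L-\mu)T_{LL},\qquad T_{HL}A_H=(A_L-\mu)T_{HL},
\]
\[
T_{LH}(A_L+B_LK_L)+T_{HH}B_HK_L=A_HT_{LH},\qquad T_{HH}A_H=A_HT_{HH},
\]
while $TB=B$ yields $T_{LL}B_L+T_{HL}B_H=B_L$ and $T_{LH}B_L+T_{HH}B_H=B_H$.

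First I would kill the upper-right block. Testing $T_{HL}A_H=(A_L-\mu)T_{HL}$ against a high-frequency eigenvector $e_h$ forces each coefficient $\scalp{T_{HL}e_h}{e_l}_H$ (with $e_l$ low) to satisfy $(\lambda_h-\lambda_l+\mu)\scalp{T_{HL}e_h}{e_l}_H=0$; since $\mu\notin\{\lambda_l-\lambda_h\}_{h\ge l\ge 1}$ and every high/low pair satisfies $h\ge l$, all these coefficients vanish, so $T_{HL}=0$. With $T_{HL}=0$ the first block equation reduces to $T_{LL}(A_L+B_LK_L)=(A_L-\mu)T_{LL}$ and the first identity of $TB=B$ becomes $T_{LL}B_L=B_L$. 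Thus $(T_{LL},K_L)$ is a finite-dimensional $F$-equivalence between $(A_L,B_L)$ and $A_L-\mu$, and invoking the uniqueness part of Theorem \ref{thm:ffeq} (legitimate because $F_\lambda$-admissibility of $B$ makes $(A_L,B_L)$ controllable) gives $T_{LL}=\tilde T$ and $K_L=\tilde K$, hence $K=\tilde K$.

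Next I would treat the two lower blocks. The relation $T_{HH}A_H=A_HT_{HH}$ is exactly the defining identity of $C(A_H)$, and since $T\in\mathcal{GL}_H(D(A)')$ the block $C:=T_{HH}$ inherits membership in $\mathcal{L}_{H_\lambda}(D(A)'_\lambda)$; therefore $C\in C(A_H)$. For $\tau:=T_{LH}$ the decisive move is to eliminate the still-unknown $C$ from the third equation: the second identity of $TB=B$ gives $\tau B_L=B_H-CB_H$, hence $\tau B_LK_L=B_HK-CB_HK$, and substituting cancels the $CB_HK_L$ term and leaves the Sylvester equation
\[
A_H\tau-\tau A_L=B_HK.
\]
Expanding $\tau e_n$ in the orthonormal basis $\phi_k:=\sqrt{1+|\lambda_k|^2}\,e_k$ of $D(A)'$ and using $A_H\phi_k=\lambda_k\phi_k$ and $A_Le_n=\lambda_n e_n$, the equation decouples coefficientwise into $(\lambda_k-\lambda_n)\scalp{\tau e_n}{\phi_k}_{D(A)'}=\scalp{B_HKe_n}{\phi_k}_{D(A)'}$; since $\Re\lambda_k<-\lambda\le\Re\lambda_n$ for $k$ high and $n$ low, the denominators never vanish, the solution is unique, and it is precisely the announced formula.

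The one remaining task, which I expect to be the main obstacle, is to upgrade $\tau(L_\lambda)\subset D(A)'_\lambda$ to $\tau(L_\lambda)\subset H_\lambda$: this is the genuinely analytic point, the earlier steps being essentially bookkeeping in the block calculus. Writing $B_HKe_n=\sum_k\beta_k e_k$, a short computation in the $D(A)'$ inner product shows that the $H$-coefficient of $\tau e_n$ along $e_k$ equals $\beta_k/(\lambda_k-\lambda_n)$, so one needs $\sum_k|\beta_k|^2/|\lambda_k-\lambda_n|^2<\infty$. From $B_HKe_n\in D(A)'$ one has $\sum_k|\beta_k|^2(1+|\lambda_k|^2)^{-1}<\infty$, and the spectral gap together with the parabolic growth $|\lambda_k-\lambda_n|\gtrsim 1+|\lambda_k|$ (a consequence of \ref{item:A2}--\ref{item:A3}) provides exactly the comparison needed to deduce the required summability. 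Hence $\tau e_n\in H$, and being a limit of high-frequency vectors, $\tau e_n\in H_\lambda$, which completes the identification.
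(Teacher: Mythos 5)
Your proposal is correct and follows essentially the same route as the paper's proof: it eliminates $T_{HL}$ via the non-resonance condition on $\mu$, identifies $(T_{LL},K)$ with the unique finite-dimensional $F$-equivalence, reads off $T_{HH}\in C(A_H)$, solves for $\tau$ coefficientwise from the same relation $(A_H-\lambda_n)\tau e_n=B_HKe_n$ (which the paper obtains by applying the equivalence to $e_n$ and using $TB=B$, rather than by your block-level elimination of $C$), and concludes $\tau e_n\in H$ from the comparison $|\lambda_k-\lambda_n|\gtrsim\sqrt{1+|\lambda_k|^2}$, exactly as in the paper. The only cosmetic difference is that you organize the computation as four block equations plus a Sylvester equation instead of testing against basis vectors.
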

\begin{proof}
Let \(n > N(\lambda)\). Then \(Ke_n = 0\), and by \(F\)-equivalence, we have
\begin{equation}\label{eq:lf}
    T(A + BK)e_n = \lambda_n Te_n = DTe_n.
\end{equation}
Hence, \(Te_n\) is an eigenvector of \(D\) \textcolor{black}{associated to the eigenvalue $\lambda_{n}$}. 
%It is clear that the only eigenvectors are the \textcolor{red}{\((e_j)_{j \geq 1  }\)}. 
\textcolor{black}{Note that \((e_j)_{j \geq 1  }\) form a basis of eigenvectors of $D$, from its definition.}
Now, because \(\mu \notin \{\lambda_l - \lambda_h\}_{h \geq l \geq 1}\), we have $Te_n \in \text{span} (e_k)_{k>N(\lambda)} $. Otherwise, it should exist $l \leq N(\lambda)$ such that $\scalp{Te_n}{e_l}_H \not = 0$, but by \eqref{eq:lf} we would have
\[
\lambda_n \scalp{Te_n}{e_l}_H = (\lambda_l-\mu) \scalp{Te_n}{e_l}_H,
\]
which would be \textcolor{black}{a contradiction}. Thus, we have
\begin{equation}\label{eq:commute}
    \forall v \in H_\lambda, \ TA_Hv = A_HTv.
\end{equation}
Therefore, \(T_{HL} = 0\) and \(T_{HH} \in C(A_H)\).

Now let \(n \leq N(\lambda)\). Applying this to the \(F\)-equivalence equation and using $TB=B$ gives us
\begin{equation}
    \lambda_n Te_n + B_LKe_n + B_HKe_n = DTe_n.
\end{equation}
Projecting this on \(L_\lambda\) and \(D(A)'_\lambda\) gives us
\begin{equation}\label{eq:decomp-HL}
    \begin{cases}
        \lambda_n (Te_n)_L + B_LKe_n = (A_L - \mu)(Te_n)_L, \\
        \lambda_n (Te_n)_H + B_HKe_n = A_H (Te_n)_H.
    \end{cases}    
\end{equation}
Now, as $T_{HL}=0$, we have $T_{LL} B_L = B_L$ and hence the first equation above is equivalent to
\begin{equation}
    T_{LL}(A_L + B_LK) = (A_L - \mu)T_{LL}.
\end{equation}
Then, identifying \(L_\lambda\) with \(\C^{N(\lambda)}\) using the isomorphism that sends the canonical basis to \((e_n)_{1 \leq n \leq N(\lambda)}\), we have, \textcolor{black}{from Theorem \ref{thm:ffeq},} \(T_{LL} = \tilde{T}\) and \(K = \tilde{K}\). Now let's use the second equality in (\ref{eq:decomp-HL}). We set \(h_n := (Te_n)_H\). \textcolor{black}{Note that, given the choice of $n$, $h_{n} = T_{LH}e_{n}$.} For \(k > N(\lambda)\), we have
\begin{equation}
    \lambda_n \scalp{h_n}{e_k}_{D(A)'} + \scalp{B_HKe_n}{e_k}_{D(A)'} = \scalp{A_H h_n}{e_k}_{D(A)'}.
\end{equation}
Using \(A^* e_k = \overline{\lambda_k} e_k\), we get
\begin{equation}
    \scalp{h_n}{e_k}_{D(A)'} = \frac{\scalp{B_H Ke_n}{e_k}_{D(A)'}}{\lambda_k - \lambda_n}.
\end{equation}
Notice that by definition of $N(\lambda)$, we have \( \Re (\lambda_{N(\lambda) + 1} - \lambda_{N(\lambda)} ) \not = 0\), hence the above expression is well-defined \textcolor{black}{(recall that $k >N(\lambda)\geq n$)}. Recall that \((\sqrt{1 + |\lambda_m|^2} e_m)_{m \geq 1}\) is an orthonormal basis of \(D(A)'\), and so
\[ \tau(e_{n}) :=  h_n = \sum_{k \geq 1} \frac{\scalp{B_H K e_n}{\sqrt{1 + |\lambda_k|^2} e_k}_{D(A)'}}{\lambda_k - \lambda_n} \sqrt{1 + |\lambda_k|^2} e_k. \]
Now, to prove that \(\tau(L_\lambda) \subset H\), it suffices to show that \(\tau(e_n) \in H\) for each \(n \in \{1, \dots, N(\lambda)\}\).

Let \(k \geq 1\). We set \(\textcolor{black}{I}_k := \scalp{B_H K e_n}{\sqrt{1 + |\lambda_k|^2} e_k}_{D(A)'}\). This forms an \(\ell^2\) sequence \textcolor{black}{since $B_{H}\in D(A)'$}, and we have
\begin{equation}
    \frac{|\textcolor{black}{I}_k|}{|\lambda_k - \lambda_n|} \sqrt{1 + |\lambda_k|^2} \underset{k \rightarrow +\infty}{\sim} |\textcolor{black}{I}_k|.
\end{equation}
Thus \(\tau(e_n) \in H\).

\end{proof}

\begin{remark}
Here we emphasize that \(T_{HL} = 0\) reflects the internal structure of the equation
\begin{equation}
    \d_t u = Au + BKu,
\end{equation}
with \(K \in \mathcal{L}(L_\lambda, \C^{m(\lambda)})\). Notice that because $Kx=Kx_L$, this equation can be decoupled as follows:
\begin{equation}
    \d_t u = Au + BKu \iff \begin{cases}
        \d_t u_L = A_L u_L + B_L K u_L, \\
        \d_t u_H = A_H u_H + B_H K u_L.
    \end{cases}
\end{equation}
The first evolution equation on \(u_L\) in the system is independent of \(u_H\) and can be solved on its own. This fact is then reflected by \(T_{HL} = 0\) in the \(F\)-equivalence. Similarly, one can observe that \(T_{LH} \neq 0\) if \(B_H \neq 0\), as the low-frequency part \(u_L\) influences the evolution of \(u_H\).

\end{remark}

\begin{remark}
Notice that \(K\) is entirely determined \textcolor{black}{as soon as $A$, $B$ and $D$ are given. $T$ is not, however, and the only free} component of \(T\) is \(C\). As we will discuss in Section \ref{section:wfeq}, this is \textcolor{black}{what} prevents achieving uniqueness.

\end{remark}

\subsubsection{Simple multiplicity case}\label{subsec:simple}
In this section, we aim to establish the following proposition.

\begin{prop}\label{prop:simple-feq}
    For almost every \(\mu \geq \lambda + c_A\) and for all \(j \in \{1,\dots, m(\lambda)\}\), there exists a parabolic \(F\)-equivalence \((T_j, K_j)\) between \((A_j, B_j)\) and the \(\lambda\)-target \(D_j\).
\end{prop}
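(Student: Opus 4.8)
The plan is to fix $j \in \{1,\dots,m(\lambda)\}$ and, since the argument is identical for each index, to suppress $j$ from the notation, writing $A$, $B$, $L_\lambda$, $H_\lambda$, $N := N_j(\lambda)$ and $D = (A_L - \mu)P_L + A_H P_H$. By Lemma~\ref{lem:decomp1} and $F_\lambda$-admissibility we may assume the eigenvalues $\lambda_1,\dots,\lambda_N$ in $L_\lambda$ are \emph{simple} and that $b_n := \scalp{B}{e_n}_{D(A)',D(A)} \neq 0$ for all $n \leq N$. Distinctness of the $\lambda_n$ together with $b_n \neq 0$ gives, via the Hautus test, controllability of the finite-dimensional pair $(A_L, B_L)$; hence Theorem~\ref{thm:ffeq} produces the unique finite-dimensional $F$-equivalence $(\tilde T, \tilde K)$ between $(A_L, B_L)$ and $A_L - \mu$, with $\tilde T$ invertible and $\tilde T B_L = B_L$. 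I would then set $K := \tilde K$, let $\tau$ be the operator of Proposition~\ref{prop:partial-uniqueness} (which already satisfies $\tau(L_\lambda) \subset H_\lambda$), and look for
\begin{equation*}
T = \begin{pmatrix} \tilde T & 0 \\ \tau & C \end{pmatrix}, \qquad C \in C(A_H),
\end{equation*}
with the commuting block $C$ still to be chosen.

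With this ansatz the relation $T(A + BK) = DT$ can be verified on each $e_n$. For $n > N$ one has $Ke_n = 0$ and $Te_n = Ce_n$, so both sides equal $\lambda_n Ce_n$ for \emph{every} $C \in C(A_H)$. For $n \leq N$, once $TB = B$ is established, the equation splits into its low-frequency part $\lambda_n \tilde T e_n + (Ke_n)B_L = (A_L - \mu)\tilde T e_n$, true by definition of $(\tilde T, \tilde K)$, and its high-frequency part $\lambda_n \tau e_n + (Ke_n)B_H = A_H \tau e_n$, true by the very definition of $\tau$. Thus everything reduces to arranging $TB = B$: the low-frequency part is $\tilde T B_L = B_L$, and the high-frequency part is the single linear constraint
\begin{equation*}
\tau B_L + C B_H = B_H .
\end{equation*}

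The key computation is to evaluate $\tau B_L$ using the polynomial structure of the feedback $\tilde K$. Let $p(z) = \prod_{n \leq N}(z - \lambda_n)$ and $q(z) = \prod_{n \leq N}(z - \lambda_n + \mu)$ be the open- and closed-loop characteristic polynomials of $A_L + B_L \tilde K$ (whose spectrum is $\{\lambda_n - \mu\}$). The matrix determinant lemma for the rank-one update $B_L\tilde K$ gives $\sum_{n \leq N} \tfrac{b_n \tilde K e_n}{z - \lambda_n} = 1 - \tfrac{q(z)}{p(z)}$. Writing $\gamma_k := \scalp{B_H}{\sqrt{1 + |\lambda_k|^2}\,e_k}_{D(A)'}$, so that $B_H = \sum_{k > N} \gamma_k \sqrt{1 + |\lambda_k|^2}\,e_k$, and inserting the formula for $\tau$ from Proposition~\ref{prop:partial-uniqueness}, I get
\begin{equation*}
\tau B_L = \sum_{k > N} \gamma_k \Big( 1 - \tfrac{q(\lambda_k)}{p(\lambda_k)} \Big) \sqrt{1 + |\lambda_k|^2}\, e_k .
\end{equation*}
Hence the constraint $\tau B_L + CB_H = B_H$ is met by the diagonal (Fourier-multiplier) operator $C$ defined by $C e_k = \tfrac{q(\lambda_k)}{p(\lambda_k)} e_k$ for $k > N$, which commutes with $A_H$ and so lies in $C(A_H)$; with this choice $TB = B$ holds, and the verification of the previous paragraph closes.

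The remaining and, I expect, the main point is that $T$ is an isomorphism; this is where the restriction to almost every $\mu$ enters. As $T$ is block lower-triangular it suffices that both diagonal blocks be invertible. The block $\tilde T$ is invertible by Theorem~\ref{thm:ffeq}. The block $C$ has symbol $s_k = \tfrac{q(\lambda_k)}{p(\lambda_k)} = \prod_{n \leq N}\big(1 + \tfrac{\mu}{\lambda_k - \lambda_n}\big)$, which is well-defined since $\Re(\lambda_k) < -\lambda \leq \Re(\lambda_n)$ forces $\lambda_k \neq \lambda_n$, and which tends to $1$ as $k \to \infty$; thus $C$ is bounded on $D(A)'_\lambda$ and on $H_\lambda$, and invertible there provided $\inf_k |s_k| > 0$. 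Since $s_k \to 1$, this can fail only when some $q(\lambda_k) = 0$, i.e. when $\mu = \lambda_n - \lambda_k$ for some $n \leq N < k$ — a countable, hence negligible, set of exceptional $\mu$, already contained in the one excluded in Proposition~\ref{prop:partial-uniqueness}. Away from it $C \in \mathcal{GL}_{H_\lambda}(D(A)'_\lambda)$, and together with $\tilde T$ invertible and $\tau$ bounded into $H_\lambda$ this yields $T \in \mathcal{GL}_H(D(A)')$ with explicit triangular inverse. Reinstating $j$, this gives for almost every $\mu \geq \lambda + c_A$ and every $j$ a parabolic $F$-equivalence $(T_j, K_j)$ between $(A_j, B_j)$ and $D_j$, as claimed.
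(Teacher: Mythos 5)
Your proposal is correct, and its skeleton coincides with the paper's: you use the necessary form of $(T,K)$ from Proposition \ref{prop:partial-uniqueness}, reduce everything to producing a diagonal commuting block $C$ satisfying $\tau B_L + CB_H = B_H$ (this is exactly the content of Lemma \ref{lem:C_j}), and then must show the diagonal symbol stays away from zero for almost every $\mu$. Where you genuinely diverge is in that last step. The paper leaves the symbol in the implicit form $c_k^{\mu} = 1 - \sum_{n\le N} \frac{b_n K_n^{\mu}}{\lambda_k-\lambda_n}$ and proves (Proposition \ref{prop:dense-feq}) that the bad set of $\mu$ is discrete via an openness/accumulation argument, using only that the $K_n^{\mu}$ are polynomials in $\mu$ without constant term (Theorem \ref{thm:ffeq}) together with uniform-in-$k$ continuity and the limit $c_k^\mu\to 1$. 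You instead evaluate the sum in closed form through the determinant identity $\sum_{n\le N}\frac{b_nK_n}{z-\lambda_n} = 1 - q(z)/p(z)$, obtaining $c_k = \prod_{n\le N}\bigl(1+\tfrac{\mu}{\lambda_k-\lambda_n}\bigr)$, so that the exceptional set is read off explicitly as $\{\lambda_n-\lambda_k : n\le N<k\}\cap\R$, a countable set already excluded in Proposition \ref{prop:partial-uniqueness}. This is a clean shortcut: it bypasses the topological argument of Proposition \phantom{}\ref{prop:dense-feq} entirely and gives a fully explicit $T$. (The identity itself is justified: $\det(zI-A_L-B_L\tilde K)=p(z)(1-\tilde K(zI-A_L)^{-1}B_L)$, and the closed-loop spectrum is $\{\lambda_n-\mu\}$ by similarity.) Two minor remarks: for indices $k$ with $\langle B_H,e_k\rangle=0$ the constraint on $c_k$ is vacuous, and the paper's choice $c_k=1$ there yields a (possibly strictly) smaller exceptional set than yours, though both are negligible so nothing is lost; and you omit the one-line check that $D_j$ is indeed a $\lambda$-target when $\mu\ge\lambda+c_A$, which the paper includes at the end.
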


To clarify, here we work with the Gelfand triple \((D(A_j), \mathcal{H}^j, D(A_j)')\), and we redefine accordingly all the necessary operator spaces. Then \(T_j \in \mathcal{GL}_{\mathcal{H}^j}(D(A_j)')\), \(K_j \in \mathcal{L}(L_\lambda^j, \C)\), and equation \ref{eq:f-eq} becomes
\begin{equation}\label{eq:j-feq}
    \begin{cases}
        T_j(A_j + B_j K_j) = D_j T_j \ \text{in} \ \mathcal{L}(\mathcal{H}^j, D(A_j)'), \\
        T_j B_j = B_j \ \text{in} \ D(A_j)'.
    \end{cases}
\end{equation}
Notice that Proposition \ref{prop:partial-uniqueness} also applies here for \((A_j, B_j, D_j)\), and we denote by \(\Tilde{T}_j\), \(\tau_j\), and \(K_j\) the operators in \eqref{eq:TK-shape}. For the proof of Proposition \ref{prop:simple-feq}, we will need the following lemma.

\begin{lem}\label{lem:C_j}
     Let \(\mu \in \R_{> 0} \setminus \{\lambda_l - \lambda_h\}_{h \geq l \geq 1}\). There exists a parabolic \(F\)-equivalence of \((A_j, B_j, D_j)\) if and only if there exists \(C_j \in C({A_j}_H) \cap \mathcal{GL}_{\mathcal{H}_\lambda^j}(D(A_j)_\lambda')\) such that
     \begin{equation}\label{eq:TB=B}
         \tau_j {B_j}_L + C_j {B_j}_H = {B_j}_H.
     \end{equation}
\end{lem}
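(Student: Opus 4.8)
The plan is to read off both implications from the rigid block lower-triangular description of any parabolic $F$-equivalence given by Proposition \ref{prop:partial-uniqueness}. Throughout I identify operators on $\mathcal{H}^j = L_\lambda^j \oplus \mathcal{H}_\lambda^j$ with $2 \times 2$ block matrices relative to this splitting, writing $(\tilde{T}_j, \tilde{K}_j)$ for the finite-dimensional $F$-equivalence between $({A_j}_L, {B_j}_L)$ and ${A_j}_L - \mu$ supplied by Theorem \ref{thm:ffeq}, and $\tau_j \in LH_\lambda(D(A_j)')$ for the operator written explicitly in Proposition \ref{prop:partial-uniqueness}. The key structural fact, obtained by rewriting the defining relation of $\tau_j$ (the high-frequency line of the analogue of \eqref{eq:decomp-HL}) as an operator identity on $L_\lambda^j$ and using ${A_j}_L e_n = \lambda_n e_n$, is the Sylvester relation
\[ {A_j}_H \tau_j - \tau_j {A_j}_L = {B_j}_H K_j, \]
where $K_j = \tilde{K}_j$. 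This is exactly what will collapse the off-diagonal block of the $F$-equivalence equation onto \eqref{eq:TB=B}.

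For the forward implication, let $(T_j, K_j)$ be a parabolic $F$-equivalence of $(A_j, B_j, D_j)$. Proposition \ref{prop:partial-uniqueness} forces $K_j = \tilde{K}_j$ and
\[ T_j = \begin{pmatrix} \tilde{T}_j & 0 \\ \tau_j & C_j \end{pmatrix}, \qquad C_j := P_H T_j P_H \in C({A_j}_H). \]
Projecting $T_j B_j = B_j$ onto $D(A_j)_\lambda'$ and using $\tilde{T}_j {B_j}_L = {B_j}_L$ gives precisely \eqref{eq:TB=B}, so only the invertibility of $C_j$ remains. Since the $HL$-block of $T_j$ vanishes, $T_j$ preserves $\mathcal{H}_\lambda^j$ (and $D(A_j)_\lambda'$) and restricts there to $C_j$; solving $T_j x = y$ for $y$ in the high-frequency space and projecting onto the low frequencies forces $\tilde{T}_j x_L = 0$, hence $x_L = 0$ by invertibility of $\tilde{T}_j$ and $C_j x_H = y$, which gives surjectivity, while $C_j x_H = 0$ implies $T_j x_H = 0$ and hence injectivity. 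Running this on both $\mathcal{H}_\lambda^j$ and $D(A_j)_\lambda'$ and invoking the bounded inverse theorem together with the characterization in Proposition \ref{prop:algebra} yields $C_j \in \mathcal{GL}_{\mathcal{H}_\lambda^j}(D(A_j)_\lambda')$.

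For the converse, given such a $C_j$ I set $K_j = \tilde{K}_j$ and define $T_j$ by the same block matrix. As $\tau_j$ is finite rank with range in $\mathcal{H}_\lambda^j$ and $\tilde{T}_j$, $C_j$ are invertible, $T_j$ belongs to $\mathcal{GL}_{\mathcal{H}^j}(D(A_j)')$, its inverse being block lower-triangular with diagonal $(\tilde{T}_j^{-1}, C_j^{-1})$. The relation $T_j B_j = B_j$ holds because its low-frequency part is $\tilde{T}_j {B_j}_L = {B_j}_L$ and its high-frequency part is \eqref{eq:TB=B}. For $T_j(A_j + B_j K_j) = D_j T_j$ I compare blocks: the $LL$-block is the finite-dimensional $F$-equivalence equation for $(\tilde{T}_j, K_j)$, the $HL$-block is $0 = 0$, and the $HH$-block reduces to ${A_j}_H C_j = C_j {A_j}_H$, valid since $C_j \in C({A_j}_H)$. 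The remaining $LH$-block reads
\[ \tau_j\big({A_j}_L + {B_j}_L K_j\big) + C_j\, {B_j}_H K_j = {A_j}_H \tau_j; \]
substituting the Sylvester relation ${A_j}_H \tau_j = \tau_j {A_j}_L + {B_j}_H K_j$ cancels the $\tau_j {A_j}_L$ terms and leaves $(\tau_j {B_j}_L + C_j {B_j}_H - {B_j}_H) K_j = 0$, which is immediate from \eqref{eq:TB=B}. Hence $(T_j, K_j)$ is a parabolic $F$-equivalence.

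I expect the main obstacle to be the correct handling of the off-diagonal $LH$-block: the point is that the Sylvester relation satisfied by $\tau_j$ by construction is exactly what converts this operator identity into \eqref{eq:TB=B} post-composed with $K_j$, while the commutation $C_j \in C({A_j}_H)$ is precisely what clears the diagonal $HH$-block. A secondary subtlety, because we are in infinite dimensions, is the invertibility of $C_j$: the block-triangular argument only gives bijectivity on $\mathcal{H}_\lambda^j$ and $D(A_j)_\lambda'$, and boundedness of $C_j^{-1}$ must then be obtained from the open mapping theorem.
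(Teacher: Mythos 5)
Your proof is correct and follows essentially the same route as the paper: necessity is read off from Proposition \ref{prop:partial-uniqueness} plus the high-frequency projection of $T_jB_j=B_j$, and sufficiency is obtained by assembling the explicit block lower-triangular $T_j$ with inverse $\bigl(\begin{smallmatrix}\tilde{T}_j^{-1} & 0\\ -C_j^{-1}\tau_j\tilde{T}_j^{-1} & C_j^{-1}\end{smallmatrix}\bigr)$ and checking the intertwining relation block by block. Your ``Sylvester relation'' ${A_j}_H\tau_j-\tau_j{A_j}_L={B_j}_HK_j$ is exactly the identity the paper verifies coefficient-wise on the eigenbasis $(e_n^j)$, so the only difference is that you state it once as an operator identity rather than summing over $k$.
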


\begin{proof}
Let \((T_j, K_j)\) be a parabolic \(F\)-equivalence between \((A_j, B_j, D_j)\). By definition, we have \(T_j B_j = B_j\). Using Proposition \ref{prop:partial-uniqueness} for \((A_j, B_j, D_j)\), we obtain:
\begin{equation}\label{eq:TB=Biff}
    T_jB_j = B_j \iff \begin{pmatrix}
        \tilde{T}_j {B_j}_L = {B_j}_L \\
        \tau_j {B_j}_L + C_j {B_j}_H = {B_j}_H
    \end{pmatrix}.
\end{equation}
Proposition \ref{prop:partial-uniqueness} ensures that \(C_j \in C({A_j}_H)\), and we have \textcolor{black}{(recall that $D(A_{j})'_{\lambda}$ correspond to the projection on the high frequencies)}:
\begin{equation}
    \forall v \in D(A_j)'_\lambda, \quad T_j v = C_j v.
\end{equation}
Since \textcolor{black}{\(T_j \in \mathcal{GL}_{\mathcal{H}^j_{\lambda}}(D(A_j)_{\lambda}')\)}, %\textcolor{black}{and $T_{j}(D(A_{j})_{\lambda}')\subseteq D(A_{j})_{\lambda}'$ (resp. $T_{j}(\mathcal{H}_{\lambda}')\subseteq \mathcal{H}_{\lambda}'$ ),}
the equality above implies \(C_j \in \mathcal{GL}_{\mathcal{H}_\lambda^j}(D(A_j)_\lambda')\).

Conversely, suppose that there exists \(C_j \in C({A_j}_H) \cap \mathcal{GL}_{\mathcal{H}_\lambda^j}(D(A_j)_\lambda')\) \textcolor{black}{such that $\tau_j {B_j}_L + C_j {B_j}_H = {B_j}_H$} holds. Then, we can define \(T_j\) and \(K_j\) as in Proposition \ref{prop:partial-uniqueness}. By Theorem \ref{thm:ffeq}, we \textcolor{black}{have} that \(\tilde{T}_j {B_j}_L = {B_j}_L\), so by Equation \eqref{eq:TB=Biff}, we have \(T_j B_j = B_j\).

Next, let's verify that \(T_j \in \mathcal{GL}_{\mathcal{H}^j}(D(A_j)')\). Define
\begin{equation}\label{eq:expr_inv}
    R_j = \begin{pmatrix}
        \tilde{T}_j^{-1} & 0 \\
        -C_j^{-1} \tau_j \tilde{T}_j^{-1} & C_j^{-1}
    \end{pmatrix}.
\end{equation}
Noticing that \(-C_j^{-1} \tau_j \tilde{T}_j^{-1} \in \mathcal{L}(L_\lambda^j, H_\lambda)\), and using the fact that \(C_j \in \mathcal{GL}_{\mathcal{H}_\lambda^j}(D(A_j)_\lambda')\), we have \(R_j \in \mathcal{L}_{\mathcal{H}^j}(D(A_j)')\). Then \(T_j \in \mathcal{GL}_{\mathcal{H}^j}(D(A_j)')\) immediately follows from the relation
\begin{equation}
    T_j R_j = R_j T_j = \text{Id}_{D(A_j)'}.
\end{equation}

We now need to demonstrate that the first equation in \eqref{eq:j-feq} holds. Let $n > N_j(\lambda)$, then we have 
\begin{align}
\begin{split}
    T_j(A_j + B_j K_j)e_n^j &= T_j A_j e_n^j \\
    &= C_j A_j e_n^j \\ 
    &= A_j C_j e_n^j \\
    &= D_j T_j e_n^j.
\end{split}
\end{align}

Next, consider the case where $n \leq N_j(\lambda)$. Using $T_j B_j = B_j$, we have 
\begin{align}
\begin{split}
    T_j(A_j + B_j K_j)e_n^j &= \lambda_n^j T_j e_n^j + {B_j}_L K_j e_n^j + {B_j}_H K_j e_n^j \\
    &= \underbrace{(\tilde{T}_j {A_j}_L + {B_j}_L K_j)e_n^j}_{\in L_\lambda^j} + \underbrace{(\lambda_n^j \tau_j + {B_j}_H K_j) e_n^j}_{\in D(A_j)'_\lambda}.
\end{split}
\end{align}

By Theorem \ref{thm:ffeq}, we have
\begin{equation}
    (\tilde{T}_j {A_j}_L + {B_j}_L K_j)e_n^j = ({A_j}_L - \lambda_n^j)\tilde{T}_j e_n^j.
\end{equation}

Then, by the definition of $\tau_j$, and setting $f_k^j := \sqrt{1 + |\lambda_k^j|^2} e_k^j$, we have 
\begin{equation}
    (\lambda_n^j \tau_j + {B_j}_H K_j) e_n^j = \sum_{k \geq 1} \lambda_k^j \frac{\langle {B_j}_H K_j e_n^j, f_k^j \rangle_{D(A_j)'}}{\lambda_k^j - \lambda_n^j} f_k^j = {A_j}_H \tau_j e_n^j.
\end{equation}

Thus
\begin{equation}
    T_j(A_j + B_j K_j)e_n^j = ({A_j}_L - \lambda_n^j)\tilde{T}_j e_n^j + {A_j}_H \tau_j e_n^j = D_j T_j e_n^j.
\end{equation}

Finally, by continuity and linearity, we have 
\begin{equation}
\forall x \in \mathcal{H}^j, \quad T_j(A_j + B_j K_j)x = D_j T_j x.     
\end{equation}

\end{proof}
The preceding lemma is very useful for constructing parabolic \(F\)-equivalences, as it shows that we only need to construct \(C_j\). Notice that
\begin{equation}
        \forall M \in \mathcal{L}_{\mathcal{H}^j_\lambda}(D(A_j)'_\lambda), \quad M \in C({A_j}_H) \iff \forall n > N_j(\lambda), \ C_j e_n^j \in \ker({A_j}_H - \lambda_n^j).
\end{equation}

Hence, it is natural to try \(C_j\) as a diagonal operator, which means 
\begin{equation}
        \forall n > N_j(\lambda), \ C_j e_n^j = c_n^j e_n^j.
\end{equation}
Then the condition \(C_j \in \mathcal{GL}_{\mathcal{H}_\lambda^j}(D(A_j)_\lambda')\) simply becomes 
\begin{equation}\label{eq:bounded}
        \exists c_1^j, c_2^j > 0, \ \text{such that} \ c_1^j \leq |c_n^j| \leq c_2^j.
\end{equation}

\textcolor{black}{Note, however, that it is not clear yet that there would exist a parabolic $F$-equivalence of \((A_j, B_j, D_j)\) with such a diagonal $C_{j}$ since it also has to satisfy \eqref{eq:TB=B}. In fact, we are going to show that not only it is possible to have a parabolic $F$-equivalence of \((A_j, B_j, D_j)\) with $C_{j}$, but in addition it is nearly always possible even without additional assumption of $B_{H}$. More precisely,} we define \(\Lambda_j\) to be the set of all \(\mu > 0\) for which there exists a parabolic \(F\)-equivalence \((T_j, K_j)\) of \((A_j, B_j, D_j)\)\footnote{Recall that \(D_j\) depends on \(\mu\).} with \(C_j\) being diagonal. The following proposition describes important topological properties of \(\Lambda_j\).

\begin{prop}\label{prop:dense-feq}
\(\Lambda_j\) is open and dense in \(\R_{>0} \setminus \{\lambda_l - \lambda_h\}_{h \geq l \geq 1}\), and \(\R_{>0} \setminus \Lambda_j\) is negligible.
\end{prop}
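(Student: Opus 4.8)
The plan is to turn the operator equation \eqref{eq:TB=B} into a countable family of scalar equations indexed by the high frequencies, read off the (essentially forced) diagonal coefficients $c_k^j$ as polynomials in $\mu$, and then show these polynomials vanish only on a locally finite set. Fix $\mu\in\R_{>0}\setminus\{\lambda_l-\lambda_h\}_{h\ge l\ge1}$. By Lemma \ref{lem:C_j}, a parabolic $F$-equivalence of $(A_j,B_j,D_j)$ with diagonal $C_j$ exists iff one can find scalars $(c_k^j)_{k>N_j(\lambda)}$, with $C_j e_k^j=c_k^j e_k^j$, defining an element of $\mathcal{GL}_{\mathcal{H}_\lambda^j}(D(A_j)_\lambda')$ and solving \eqref{eq:TB=B}. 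Writing $\beta_n:=\langle B_j,e_n^j\rangle_{D(A_j)',D(A_j)}$, $b_k:=\langle B_j,f_k^j\rangle_{D(A_j)'}$ and $\kappa_n^j(\mu):=K_je_n^j\in\C$, I would project \eqref{eq:TB=B} onto each $f_k^j$ and use the explicit form of $\tau_j$ from Proposition \ref{prop:partial-uniqueness} (which gives $\langle\tau_j e_n^j,f_k^j\rangle=\kappa_n^j(\mu)\,b_k/(\lambda_k^j-\lambda_n^j)$) to obtain, for every $k>N_j(\lambda)$,
\[
b_k\Big(c_k^j+\sum_{n=1}^{N_j(\lambda)}\frac{\beta_n\,\kappa_n^j(\mu)}{\lambda_k^j-\lambda_n^j}\Big)=b_k.
\]
Hence $c_k^j$ is free (take $c_k^j=1$) when $b_k=0$, and forced to $c_k^j=P_k(\mu):=1-\sum_{n=1}^{N_j(\lambda)}\beta_n\kappa_n^j(\mu)/(\lambda_k^j-\lambda_n^j)$ when $b_k\ne0$.

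Since $A_j$ is diagonal parabolic, $\Re(\lambda_k^j)\to-\infty$, so every fraction tends to $0$ and $P_k(\mu)\to1$; thus $(c_k^j)_k$ is automatically bounded, and the diagonal $C_j$ lies in $\mathcal{GL}_{\mathcal{H}_\lambda^j}(D(A_j)_\lambda')$ iff $\inf_k|c_k^j|>0$, which—because $c_k^j\to1$—holds iff $c_k^j\ne0$ for every $k$. Therefore $\mu\in\Lambda_j$ iff $\mu\notin\{\lambda_l-\lambda_h\}$ and $P_k(\mu)\ne0$ for every $k>N_j(\lambda)$ with $b_k\ne0$; equivalently $\Lambda_j=(\R_{>0}\setminus\{\lambda_l-\lambda_h\})\setminus Z$ with $Z:=\bigcup_{k:\,b_k\ne0}\{\mu:P_k(\mu)=0\}$.

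Controlling $Z$ is the crux and the main obstacle. The key input is Theorem \ref{thm:ffeq}: the single-input feedback placing the spectrum of the controllable pair $({A_j}_L,{B_j}_L)$ at $\{\lambda_n^j-\mu\}$ depends polynomially on $\mu$, so each $\kappa_n^j$, and hence each $P_k$, is a polynomial in $\mu$. At $\mu=0$ the target is ${A_j}_L$, whose unique finite-dimensional $F$-equivalence is trivial, so $\kappa_n^j(0)=0$ and $P_k(0)=1\ne0$; thus no $P_k$ is identically zero and each $\{P_k=0\}$ is finite. Moreover, on any compact interval $[a,b]$ one has $\sup_{[a,b]}|P_k-1|\le\sum_{n=1}^{N_j(\lambda)}|\beta_n|\,\big(\sup_{[a,b]}|\kappa_n^j|\big)/|\lambda_k^j-\lambda_n^j|\to0$ as $k\to\infty$, so $P_k$ has no zero in $[a,b]$ once $k$ is large enough. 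Consequently $Z\cap[a,b]$ is a finite union of finite sets, i.e. $Z$ is locally finite; the whole difficulty is precisely to rule out accumulation of these zeros, and this rests on the polynomiality of $\mu\mapsto\kappa_n^j(\mu)$ together with the uniform high-frequency decay.

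It remains to conclude. A locally finite set is closed, discrete, and countable. Negligibility follows since $\R_{>0}\setminus\Lambda_j\subset\{\lambda_l-\lambda_h\}\cup Z$ is a countable union of countable sets, hence Lebesgue-negligible. For openness in $\R_{>0}\setminus\{\lambda_l-\lambda_h\}$: if $\mu_0\in\Lambda_j$, then $\mu_0\notin Z$ and, $Z$ being closed, there is $\delta>0$ with $(\mu_0-\delta,\mu_0+\delta)\cap Z=\emptyset$, so $(\mu_0-\delta,\mu_0+\delta)\setminus\{\lambda_l-\lambda_h\}\subset\Lambda_j$ is a subspace neighbourhood of $\mu_0$. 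Finally, $\R_{>0}\setminus\Lambda_j$ is countable and hence has empty interior, so $\Lambda_j$ is dense in $\R_{>0}$ and a fortiori in $\R_{>0}\setminus\{\lambda_l-\lambda_h\}$.
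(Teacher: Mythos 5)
Your proposal is correct and follows essentially the same route as the paper: both reduce via Lemma \ref{lem:C_j} to the forced diagonal coefficients $c_k^j(\mu)$, use their convergence to $1$ as $k\to\infty$ (uniformly in $\mu$ on compacts) together with the polynomiality and vanishing at $\mu=0$ of the finite-dimensional feedback from Theorem \ref{thm:ffeq} to control the exceptional set. The only difference is organizational --- the paper proves openness by perturbing $\mu$ and discreteness of the complement by a contradiction/subsequence-extraction argument, whereas you directly exhibit the exceptional set as a locally finite union of finite polynomial zero sets, which is a cleaner packaging of the same ideas.
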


\begin{proof}
First, we begin by demonstrating the openness of \(\Lambda_j\). To avoid confusion, we will explicitly indicate every dependency on \(\mu\) in our notation throughout this proof. Let \(\mu \in \Lambda_j\), then there exists a parabolic \(F\)-equivalence \((T_j^\mu, K_j^\mu)\) of \((A_j, B_j, D_j^\mu)\). Let \(k > N_j(\lambda)\), and we set 
\begin{equation}
    f_k^j = \sqrt{1+|\lambda_k^j|^2} e_k^j, \quad b_k^j := \scalp{B_j}{f_k^j}_{D(A_j)'}, \quad K_n^j := K_j \textcolor{black}{f_n^j}.
\end{equation}
Then, projecting \eqref{eq:TB=B} onto \(\textcolor{black}{f_k^j}\), we obtain
\begin{equation}
    (1-{c_k^{\mu,j}})b_k^j = \sum_{n=1}^{N_j(\lambda)} b_n^j \scalp{\tau^\mu_j (\textcolor{black}{f_n^j})}{f_k^j}_{D(A_j)'} = b_k^j \sum_{n=1}^{N_j(\lambda)} \frac{b_n^j K_n^{\mu, j}}{\lambda_k^j - \lambda_n^j}.
\end{equation}
Hence, if \(b_k^j \neq 0\), \textcolor{black}{this imposes}
\begin{equation}
    c_k^{\mu,j} = 1 -  \sum_{n=1}^{N_j(\lambda)} \frac{b_n^j K_n^{\mu, j}}{\lambda_k^j - \lambda_n^j}.
\end{equation}
Now, let \(\delta \in \R\) such that \(\mu + \delta \in \R_{>0}  \setminus \{\lambda_l - \lambda_h\}_{h \geq l \geq 1}\). We will show that if \(|\delta|\) is small enough, then \(\mu + \delta \in \Lambda_j\), thereby proving that \(\Lambda_j\) is an open set of $\R_{>0}  \setminus \{\lambda_l - \lambda_h\}_{h \geq l \geq 1}$.

Let \(k > N_j(\lambda)\), and define \(C_j^{\mu + \delta}\) as follows:
\begin{itemize}
    \item[--]  If \(b_k^j = 0\), then set \(c_k^{\mu + \delta, j } = 1\).
    \item[--]  Otherwise, set \(\displaystyle c_k^{\mu + \delta, j } =  1 -  \sum_{n=1}^{N_j(\lambda)} \frac{b_n^j K_n^{\mu + \delta, j}}{\lambda_k^j - \lambda_n^j}\).
\end{itemize}

Now, by Lemma \ref{lem:C_j}, we only need to check that \eqref{eq:bounded} holds for \((c_k^{\mu +\delta, j})_{k > N(\lambda)}\). Notice that the above expressions imply, under Hypothesis \ref{item:A2}, that \textcolor{black}{(note that the number of terms in the sum is finite and does not depend on $k$)}
\begin{equation}
\label{eq:limc}
    \lim_{k \rightarrow +\infty} c_{k}^{\mu+\delta,j} = 1.
\end{equation}
Hence, the sequence is bounded from above. \textcolor{black}{Since $\mu\in \Lambda_{j}$, then \eqref{eq:bounded} holds for $(c_{k}^{\mu,j})_{k\geq N(\lambda)}$. Now observe that, from
%To prove that it is bounded from below, we just need to show that \textcolor{red}{\(c_{k}^{\mu+\delta,j} = 0\)}
%for only a finite number of \(k > N_j(\lambda)\). Now, 
Theorem \ref{thm:ffeq},} the $K_{n}^{\mu+\delta,j}$ are continuous in $\delta$. Also notice that the number of terms in the sum defining $c_k^{\mu + \delta,j}$ is finite and independent of $k$, and that we have (as $(\Re (\lambda^j_m))_{m \geq 1}$ is non-increasing)
\begin{equation}
    \forall k > N_j(\lambda), \ \forall n \leq N_j(\lambda), \ |\lambda_k^j - \lambda_n^j| \geq \Re (\lambda^j_{N_j(\lambda)+1} - \lambda_{N_j(\lambda)}^j) > 0.
\end{equation}
Thus, we deduce that the $c_k^{\mu + \delta,j}$ are continuous in $\delta$, uniformly in $k$. Therefore, \textcolor{black}{using \eqref{eq:bounded} with $(c_{k}^{\mu,j})_{k\geq N(\lambda)}$}
%as $c_k^{\mu,j} \not = 0$ for all $k > N_j(\lambda)$,} 
if \(|\delta|\) is small enough, we obtain that \textcolor{black}{there exists $c_{j}>0$ such that
\begin{equation}
    |c_{k}^{\mu+\delta,j}|>c_{j},\;\; \forall k> N(\lambda).
\end{equation}}
%\todo[author=Amaury]{Est-ce qu'on n'aurait d'ailleurs pas pu utiliser la continuité uniforme pour garantir la borne par au-dessous aussi ?}
%\todo[author=Vincent]{Oui tout à fait, mais dans ce cas là il faudra quand même réintroduire plus bas \eqref{eq:limc} }
which shows that \( \mu + \delta \in \Lambda_j\).\\

Now we prove that \(\R_{>0} \setminus \Lambda_j\) is discrete in \(\R_{>0}\), hence countable, and this will demonstrate the last two assertions \textcolor{black}{of Proposition \ref{prop:dense-feq}}. We proceed by contradiction, suppose there exists \(\mu_{\infty} \in \R_{>0} \setminus \Lambda_j\) such that there exists an injective sequence \((\mu_m)_{m \geq 1}\) with \(\mu_m \in \R_{>0} \setminus \Lambda_j\) and \(\mu_m \rightarrow \mu_{\infty}\) as \(m \rightarrow \infty\). By the previous discussion and Lemma \ref{lem:C_j}, for each \(m \geq 1\), there exists \(k_m > N(\lambda)\) such that 
\begin{equation}\label{eq:c=1}
    c_{k_m}^{\mu_m,j} =  1 -  \sum_{n=1}^{N_j(\lambda)} \frac{b_n^j K_n^{\mu_m, j}}{\lambda_{k_m}^j - \lambda_n^j} = 0.
\end{equation}
Recall that, \textcolor{black}{from \eqref{eq:limc}, there exists $k_{0}$} large enough \textcolor{black}{such that} $c_k^{\mu_\infty,j} \geq 1/2$ for all $k > k_0$, and by the previous discussion, $c_k^{\mu_m,j}$ converges to $c_k^{\mu_\infty,j}$ uniformly in $k$, when $m \rightarrow +\infty$. Therefore, there 
\textcolor{black}{exists $m_{1}$ such that
\begin{equation}
    \forall m\geq m_{1}, \forall k>k_{0}, c_{k}^{\mu_{m},j}\neq 0.
\end{equation}
Therefore for $m\geq m_{1}$ there}
can only be a finite number of \(k\) where \eqref{eq:c=1} holds. Hence we can find \textcolor{black}{$k_{0}>N(\lambda)$ and} extract a subsequence $\psi$ \textcolor{black}{such that $k_{\psi_{m}}=k_{0}$, namely}
\begin{equation}
\label{eq:contra0}
 \ \forall m \geq 1, \ c_{k_0}^{\mu_{\psi(m)}, j} = 0.    
\end{equation}
However, by Theorem \ref{thm:ffeq}, we know that $K_{n}^{\mu,j}$, is a polynomial in \(\mu\) without constant term. Hence $c_{k_0}^{\mu,j}$ must be a non zero polynomial in $\mu$, but the isolated zero theorem and \eqref{eq:contra0} imply
\begin{equation}
   \forall \mu \in \R, \ c_{k_0}^{\mu,j} = 0,
\end{equation}
\textcolor{black}{which is a contradiction}.

\end{proof}

Finally, we can prove Proposition \ref{prop:simple-feq}.

\begin{proof}[Proof of Proposition \ref{prop:simple-feq}]
   We set \(\Lambda = \bigcap_{j \in \{1, \dots, m(\lambda)\}} \Lambda_j\). Proposition \ref{prop:dense-feq} ensures us that $\Lambda$ is full measure and dense in $\R_{>0}$, so let \(\mu > \lambda + c_A\) within $\Lambda$, then for all $j\in\{1, \dots, \ml\}$ there exists a parabolic $F$-equivalence of $(A_j,B_j,D_j)$.
   Now as \(\mu \geq \lambda + c_A\), we have
    \[
    \forall \nu \in \sigma(D_j), \ \nu \leq -\lambda.
    \]
    This implies that \(D_j\) is a \(\lambda\)-target. This concludes the proof of Proposition \ref{prop:simple-feq}.
\end{proof}

\subsubsection{Last step}\label{subsec:conc}
We now finalize the proof of our main result. By Proposition \ref{prop:simple-feq}, for almost every \(\mu \geq \lambda + c_A\) and for all \(j \in \{1, \dots, m(\lambda)\}\), there exists a parabolic \(F\)-equivalence \((T_j, K_j)\) between \((A_j, B_j)\) and \(D_j\). Hence, if we set
\begin{equation}
    T = T_1 + \dots + T_{m(\lambda)}, \quad K = (K_1, \dots, K_{m(\lambda)}),
\end{equation}
we have \(T \in \mathcal{GL}_H(D(A)')\) and \(K \in \mathcal{L}(L_\lambda, \C^{m(\lambda)})\).

Again, notice that
\begin{equation}
    D = D_1 + \dots + D_{m(\lambda)},
\end{equation}
and because \(\mu \geq \lambda + c_A\), we know that \(D\) is a \(\lambda\)-target. Finally, let \(x \in H\). We have \(x = x_1 + \dots + x_{m(\lambda)}\) with \(x_j \in \mathcal{H}^j\). By the definition of \((T_j, K_j)\), we have \(TB = B\) in \(D(A)'\), and
\begin{equation}
    T(A + BK)x = (TA + BK)x = \sum_{j=1}^{m(\lambda)} (T_j A_j + B_j K_j)x_j = \sum_{j=1}^{m(\lambda)} D_j T_j x_j = DTx.
\end{equation}
This concludes the proof of Theorem \ref{thm:main}.

\subsection{Stabilization of nonlinear systems}\label{subsec:stab-nonl-mainproof}
\textcolor{black}{In this subsection we prove Theorem \ref{thm:nonlinear-stab}. Let $A,\lambda,B$ as in Theorem \ref{thm:nonlinear-stab}, and let $(T,K)$ be an $F$-equivalence of $(A,B,D)$ given by Theorem \ref{thm:main}. As $
B$ is $F_\lambda$-admissible, without loss of generality we can suppose that $m=\ml$.}

\textcolor{black}{We start by showing the following essential lemma.
\begin{lem}\label{lem:iso-scale}
    Let $s\in[0,1]$ be such that $B\in (D_{-s}(A))^{\ml}$, then $T \in \mathcal{GL}(D_{r}(A))$ for all $r \in [-1,1-s]$.
\end{lem}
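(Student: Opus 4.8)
The plan is to read off the explicit block structure of $T$ produced in the proof of Theorem \ref{thm:main} and to verify, block by block, that $T$ and $T^{-1}$ are bounded on $D_r(A)$ for every $r\in[-1,1-s]$. Recall that $T=\sum_{j=1}^{m(\lambda)}T_j$, with each $T_j$ of the triangular form of Proposition \ref{prop:partial-uniqueness} and with $C_j$ chosen diagonal in the construction of Proposition \ref{prop:dense-feq}. Since the decomposition $H=\bigoplus_j\mathcal H^j$ of Lemma \ref{lem:decomp1} is orthogonal and compatible with the whole scale ($D_r(A)=\bigoplus_j D_r(A_j)$), the assembled operator reads
\[
T=\begin{pmatrix}\tilde T & 0\\ \tau & C\end{pmatrix},
\]
where $\tilde T=\sum_j\tilde T_j\in\mathcal{GL}(L_\lambda)$ acts on the finite-dimensional low-frequency space, $C=\sum_j C_j$ is the diagonal high-frequency operator $Ce_k=c_ke_k$ with $0<c_1\le|c_k|\le c_2$ (taking the min and max of the bounds \eqref{eq:bounded} over the finitely many $j$), and $\tau=\sum_j\tau_j\in LH_\lambda(D(A)')$ is given by the formula of Proposition \ref{prop:partial-uniqueness}.

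First I would dispose of the two easy blocks. As $L_\lambda$ is finite-dimensional, all the $D_r(A)$-norms restricted to it are equivalent, so $\tilde T$ and $\tilde T^{-1}$ are bounded on $(L_\lambda,\|\cdot\|_{D_r(A)})$ for every $r$. The operator $C$ is diagonal in the eigenbasis, hence commutes with $(-A+\delta)^r$, and the two-sided bound on $|c_k|$ gives directly
\[
c_1\|x\|_{D_r(A)}\le \|Cx\|_{D_r(A)}\le c_2\|x\|_{D_r(A)},
\]
so that $C$ and $C^{-1}$ are bounded on $D_r(A)$ for \emph{every} $r\in\R$, with no restriction.

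The heart of the matter is the regularity of $\tau$, and this is where the threshold $r\le 1-s$ emerges. Put $g_n:=B_HKe_n$, which lies in $D_{-s}(A)$ because $B_H\in D_{-s}(A)$ and $Ke_n$ is a fixed vector of scalars; writing $g_n=\sum_k (g_n)_k e_k$ we have $\sum_k(1+|\lambda_k|^2)^{-s}|(g_n)_k|^2<\infty$, and $(g_n)_k=0$ for $k\le N(\lambda)$. Relating the $D(A)'$-coordinates of $g_n$ against $f_k=\sqrt{1+|\lambda_k|^2}\,e_k$ to its $e_k$-coefficients rewrites the formula of Proposition \ref{prop:partial-uniqueness} as
\[
\tau(e_n)=\sum_k \frac{(g_n)_k}{\lambda_k-\lambda_n}\,e_k.
\]
By \ref{item:A2}--\ref{item:A3} one has $|\lambda_k|\to+\infty$ and hence $\tfrac{1+|\lambda_k|^2}{|\lambda_k-\lambda_n|^2}\to 1$ as $k\to+\infty$ (the surviving denominators, with $k>N(\lambda)\ge n$, being nonzero by the spectral gap), so this ratio is bounded and there is a constant $C>0$ with
\[
\|\tau(e_n)\|_{D_r(A)}^2=\sum_k (1+|\lambda_k|^2)^r\frac{|(g_n)_k|^2}{|\lambda_k-\lambda_n|^2}\le C\sum_k (1+|\lambda_k|^2)^{r-1}|(g_n)_k|^2.
\]
This last series is finite as soon as $r-1\le -s$, i.e. $r\le 1-s$, since $g_n\in D_{-s}(A)$. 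As $L_\lambda$ is finite-dimensional this yields $\tau\in\mathcal{L}(L_\lambda,D_r(A))$ exactly for $r\le 1-s$.

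Finally I would conclude. For $r\in[-1,1-s]$ all three blocks are bounded on $D_r(A)$, hence so is $T$. The inverse has the same structure, given by \eqref{eq:expr_inv},
\[
T^{-1}=\begin{pmatrix}\tilde T^{-1} & 0\\ -C^{-1}\tau\tilde T^{-1} & C^{-1}\end{pmatrix},
\]
and the same three facts (finite-dimensionality of $\tilde T^{-1}$, the two-sided bound for $C^{-1}$, and $\tau\in\mathcal{L}(L_\lambda,D_r(A))$ composed with the bounded $\tilde T^{-1}$ and $C^{-1}$) show $T^{-1}\in\mathcal{L}(D_r(A))$. Since $TT^{-1}=T^{-1}T=\text{Id}$ already holds on $D(A)'$, it persists on $D_r(A)$, giving $T\in\mathcal{GL}(D_r(A))$ for all $r\in[-1,1-s]$. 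The only genuinely delicate point is the sharp matching of the two weighted $\ell^2$ conditions for $\tau$, which is precisely what pins down the endpoint $1-s$; the rest is bookkeeping on the block structure.
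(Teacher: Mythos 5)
Your proposal is correct and follows essentially the same route as the paper: read off the triangular block structure of $T$ from Proposition \ref{prop:partial-uniqueness}, observe that only the $\tau$ block is delicate, and verify via the weighted $\ell^2$ expansion that $\tau(e_n)\in D_{1-s}(A)$ because $B_HKe_n\in D_{-s}(A)$, with the inverse handled by \eqref{eq:expr_inv}. You are in fact slightly more explicit than the paper about why the $\tilde T$ and (diagonal) $C$ blocks are bounded and boundedly invertible on every $D_r(A)$, which is a welcome addition rather than a deviation.
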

\begin{proof}
    By Proposition \ref{prop:partial-uniqueness}, we have, keeping the same notation
    \begin{equation}
        T =  \begin{pmatrix}
            \tilde{T} & 0 \\
            \tau & C
        \end{pmatrix}.
    \end{equation}
    Thus as $T \in \mathcal{GL}_H(D(A)')$ by hypothesis, using expression \eqref{eq:expr_inv} to construct an inverse, we only need to show that $\tau(L_\lambda) \subset D_{1-s}(A)$ to conclude. Using that $({\sqrt{1+|\lambda_k|^2}}^{r} e_k)_{k\geq 1}$ is a Hilbert basis of $D_{-r}(A)$ for all $r\in \R$, and Proposition \ref{prop:partial-uniqueness}, we have for all $n\leq N(\lambda)$
    \begin{equation}
        \tau(e_n) = \sum_{k\geq 1} \frac{\scalp{B_H Ke_n}{\sqrt{1+|\lambda_k|^2}^{s} }_{D_{-s}(A)} }{\lambda_k - \lambda_n} \sqrt{1+|\lambda_k|^2}^{s} e_k.
    \end{equation}
    As in the proof of Proposition \ref{prop:partial-uniqueness}, we set $I_k =\scalp{B_H Ke_n}{\sqrt{1+|\lambda_k|^2}^{s} }_{D_{-s}(A)}  $ which forms a $\ell^2$ sequence as $B_HKe_n \in D_{-s}(A)$, then we have
    \begin{equation}
        ||\tau(e_n)||_{D_{1-s}(A)}^2 = \sum_{k\geq 1 } \frac{|I_k|^2}{|\lambda_k-\lambda_n|^2} (1+|\lambda_k|^2) \lesssim \sum_{k\geq 1 }|I_k|^2 <   +\infty.
    \end{equation}
\end{proof}}

\textcolor{black}{
Now we set $\gamma= \min(1-s,\frac{1}{2})$
 with $s \in [0,1]$ such that $B \in (D_{-s}(A))^{\ml}$. By $F$-equivalence, $(T^{-1}(\frac{e_k}{\sqrt{1+|\lambda_k|^2}^\gamma}))_{k\geq1}$ is a Riesz basis of $D_\gamma(A+BK)$, and by Lemma \ref{lem:iso-scale} it is also a Riesz basis of $D_\gamma(A)$. Hence $D_\gamma(A+BK)=D_\gamma(A)$ and the two norms are equivalent, thus we have the following continuous inclusion 
 \begin{equation}\label{eq:inclusion_dom}
     D_{1/2}(A+BK) \hookrightarrow D_\gamma(A).
 \end{equation}
 }

Now notice that by $F$-equivalence, in the Hilbert space $H$ endowed with the norm $||T^{-1}\cdot||_H$, $A+BK$ satisfies \ref{item:Hyp1} in Appendix \ref{sec:Nonlin-ps} (it suffices to take $\mu>\lambda+c_A$ in Theorem \ref{thm:main}), and with this operator, $\mathcal{F}$ satisfies $\ref{item:Hyp2}$ thanks to \eqref{eq:inclusion_dom}. Hence applying Proposition \ref{prop:A-nonlin-wp}, we get the result of Theorem \ref{thm:nonlinear-stab} but for the norm $||T^{-1}\cdot||_H$ and with $C = 1$ in \eqref{eq:stab-nl}, then to conclude it suffices to notice that
\begin{equation}
    \forall x \in H, \quad ||T||_{\mathcal{L}(H)}^{-1} ||x||_H  \leq ||T^{-1}x||_H \leq ||T^{-1}||_{\mathcal{L}(H)} ||x||_H.
\end{equation}

\section{Approximate controllability and uniqueness}\label{section:wfeq}

The problem of finding an \(F\)-equivalence between a control system \((A, B)\) and a target \(D\) is challenging, especially when the problem is ill-posed, meaning there may be multiple pairs \((T, K)\) that satisfy the conditions. In this section, we investigate this issue of uniqueness.

First, in subsection \ref{subsec:wfeq}, we introduce an algebraic characterization for the lack of uniqueness on $T$. This allows us to introduce the weak $F$-equivalence formalism and to recover a well-posed problem. Then in subsection \ref{subsec:link-control}, we show that our algebraic characterization can be linked to the approximate controllability of $(A,B)$. This allows us to prove Theorem \ref{thm:approx-control-nbh}, which implies that the parabolic $F$-equivalence problem is well-posed if and only if $(A,B)$ is approximately controllable.

\subsection{Weak $F$-equivalence}\label{subsec:wfeq} 
Let us fix $A$, $\lambda$, $B$, $\mu$, and $D$ as in Theorem \ref{thm:main}.\footnote{Hence, with the notations of Section \ref{sec:MainProof}, $\mu \in \Lambda$.} Let $(T,K), (T',K') \in \mathcal{GL}_H(D(A)') \times \mathcal{L}(H, \C^{\ml})$ 
be parabolic $F$-equivalences of $(A,B,D)$. Proposition \ref{prop:partial-uniqueness} ensures that $K = K'$, and
\begin{equation}
    T_{LL} = T_{LL}', \ T_{LH} = T_{LH}', \ T_{HL} = T_{HL}', \ T_{HH}, T_{HH}' \in C(A_H).
\end{equation}
We adopt the same notation as in the proposition, hence we set $C := T_{HH}$ and $C' := T_{HH}'$. Now, notice that by the definition of $F$-equivalence, we have
\begin{equation}
    (T-T')B = B-B = 0.
\end{equation}
And by Lemma \ref{lem:C_j}, this gives us
\begin{equation}
    (C-C')B_H = 0.
\end{equation}
This leads us to define the following closed subspace of $C(A_H)$:
\begin{equation}
    N_{B_H} = \{M \in C(A_H) \, | \, MB_H = 0 \}.
\end{equation}
Then, we endow $\mathcal{L}_H(D(A)') / N_{B_H}$ with the quotient norm, and because $N_{B_H}$ is closed, the quotient is a Banach space. We denote by $\pi : \mathcal{L}_H(D(A)') \rightarrow \mathcal{L}_H(D(A)') / N_{B_H}$ the quotient map. Now, we have found all the possible solutions, \textcolor{black}{as illustrated by the following proposition.}
\begin{prop}\label{prop:sol-feq}
    Let $\mathcal{S} \subset \mathcal{GL}_H(D(A)') \times \mathcal{L}(H, \C^{\ml})$ be the set of all parabolic $F$-equivalences of $(A,B,D)$. We denote by $(T_*, K_*)$ \textcolor{black}{the} solution given by Theorem \ref{thm:main}. Then we have
    \begin{equation}
        \mathcal{S} = (\pi(T_*) \cap \mathcal{GL}_H(D(A)')) \times \{ K_* \}.
    \end{equation}
\end{prop}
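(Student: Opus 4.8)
The plan is to prove the two inclusions separately, using the partial-uniqueness result of Proposition \ref{prop:partial-uniqueness} to freeze everything in $T$ except its high-frequency block. First I would record the consequences of that proposition: every parabolic $F$-equivalence $(T,K)$ of $(A,B,D)$ satisfies $K=\tilde{K}=K_*$, and has the block form $T=\begin{pmatrix} \tilde{T} & 0 \\ \tau & C \end{pmatrix}$ with $C=T_{HH}\in C(A_H)$, where $\tilde{T}$ and $\tau$ are determined by $(A,B,D)$ alone (for $\tau$, through its explicit formula in terms of $B_H$ and the now-fixed $K_*$). In particular $\tilde{T}$ and $\tau$ agree with the corresponding blocks of $T_*$, so any two parabolic $F$-equivalences differ only through their $HH$-block, and the whole statement reduces to identifying the admissible high-frequency blocks $C$. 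Throughout I read $\pi(T_*)$ not as a point of the quotient but as the affine coset $T_*+N_{B_H}$ inside $\mathcal{L}_H(D(A)')$, $N_{B_H}$ being embedded as operators supported on the $HH$-block.

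For the inclusion $\mathcal{S}\subseteq(\pi(T_*)\cap\mathcal{GL}_H(D(A)'))\times\{K_*\}$, take $(T,K)\in\mathcal{S}$. Then $K=K_*$ by the above, and $T-T_*$ is supported on the $HH$-block and equals $C-C_*$ there. Applying the two relations $TB=B$ and $T_*B=B$ gives $(T-T_*)B=0$; since the $HH$-block annihilates $B_L$ and acts as $C-C_*$ on $B_H$, this reads $(C-C_*)B_H=0$, so $C-C_*\in N_{B_H}$ and hence $T\in T_*+N_{B_H}=\pi(T_*)$. As $T$ is invertible in $\mathcal{GL}_H(D(A)')$ by the very definition of an $F$-equivalence, this yields the claimed inclusion.

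For the reverse inclusion, let $T\in\pi(T_*)\cap\mathcal{GL}_H(D(A)')$, write $T=T_*+M$ with $M\in N_{B_H}$, and verify the two defining equations of Definition \ref{defn:df-eq} for $(T,K_*)$. Since $M\in C(A_H)$ is supported on the high frequencies it kills $L_\lambda$, so $MB_L=0$, while $MB_H=0$ by membership in $N_{B_H}$; hence $MB=0$ and $TB=T_*B=B$. For the intertwining relation I would use $MB=0$ to get $M(A+BK_*)=MA$, then $MA_L=0$ (range of $A_L$ lies in $L_\lambda$) together with the commutation $MA_H=A_HM$ (membership in $C(A_H)$) to obtain $MA=A_HM$; finally, since $M$ has range in the high-frequency space, $DM=A_HM$. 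Adding the known identity $T_*(A+BK_*)=DT_*$ then gives $T(A+BK_*)=DT$. With $T$ invertible by hypothesis and $K_*\in\mathcal{L}(L_\lambda,\C^{\ml})$, this shows $(T,K_*)\in\mathcal{S}$.

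The only genuinely delicate point is the intertwining computation in the reverse inclusion: it rests entirely on correctly tracking how $M$ interacts with the frequency decomposition, namely that it annihilates the low-frequency blocks of both $A$ and $B$ and commutes with $A_H$, and on the observation that $D$ and $A_H$ agree on the range of $M$. Everything else is bookkeeping within the block-matrix formalism set up before Proposition \ref{prop:partial-uniqueness}, so I do not expect a serious obstacle beyond keeping the projections straight.
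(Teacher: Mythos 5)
Your proof is correct and follows essentially the same route as the paper: both directions rest on Proposition \ref{prop:partial-uniqueness} to freeze $K$ and all blocks of $T$ except the $HH$-block, the forward inclusion reduces $(T-T_*)B=0$ to $(C-C_*)B_H=0$, and the reverse inclusion verifies the two equations for $T=T_*+M$ using $MB_H=0$ and $M\in C(A_H)$. The only cosmetic difference is that you carry out the intertwining check additively on $M$ while the paper plugs $T_*+N$ into the block formula for $F_K$; the two computations are identical in content.
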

\begin{proof}
    The above discussion shows that $\mathcal{S} \subset (\pi(T_*) \cap \mathcal{GL}_H(D(A)')) \times \{ K_* \}$ (as $\pi(T_*) = T_* + N_{B_H} $). Now let $T \in \mathcal{GL}_H(D(A)')$ such that there exists $N \in N_{B_H}$ with $T = T_* + N$. Hence, we have $TB = B$, and if we set $F(T) = T(A + BK) - DT$, using the same notation as in Proposition \ref{prop:partial-uniqueness}, we have
    \begin{equation}
    \label{eq:FT}
        F(T) = \begin{pmatrix}
            \tilde{T}(A_L + B_LK) - (A_L - \mu)\tilde{T} & 0 \\
            \tau (A_L + B_LK) + (C + N)B_HK - A_H \tau & (C + N)A_H - A_H(C + N)
        \end{pmatrix}.
    \end{equation}
    By definition, $NB_H = 0$, hence we have $F(T) = F(T_*) = 0$, and this concludes the proof.
\end{proof}

\textcolor{black}{We can deduce from this} a simple formalism that allows us to restate the parabolic $F$-equivalence problem so that it becomes well-posed. To this end let us fix $K \in \mathcal{L}(L_\lambda, \C^{\ml})$, if we set $\mathcal{T} := \pi(T)$, we first need to make sense of the following equation 
\begin{equation}\label{eq:weak-conjug}
    \mathcal{T}(A+BK) = D \mathcal{T}.
\end{equation}

%\todo[author=Amaury]{On aurait peut-être intérêt à mettre cela avant l'équation: les gens risquent peut-être de se demander ce qu'on veut dire par $T+N_{B_{H}}$.} 
We define $F_K : \mathcal{L}_H(D(A)') \rightarrow \mathcal{L}(H,D(A)')$ as in the previous proof, which is a bounded linear operator
\begin{equation}
    \forall T \in \mathcal{L}_H(D(A)'), \ F_K(T) = T(A+BK) - DT.
\end{equation}
Now for every $T \in \mathcal{L}_H(D(A)')$, we have
\begin{equation}
    F_K(T) = \begin{pmatrix}
        T_{LL}(A_L + B_LK) - (A_L - \mu)T_{LL} + T_{HL}B_HK & T_{HL}A_H - (A_L - \mu)T_{HL} \\
        T_{LH}(A_L + B_LK) + T_{HH}B_HK - A_H T_{LH} & T_{HH}A_H - A_H T_{HH}
    \end{pmatrix}.
\end{equation}
Now let $N \in N_{B_H}$. We have 
\begin{equation}
    F_K(N) = \begin{pmatrix}
        0 & 0 \\
        N B_H K & NA_H - A_H N
    \end{pmatrix} = 0.
\end{equation}
Hence, $F_K$ continuously factors through $\pi$, which means that there exists a unique bounded operator $\mathcal{F}_K$ from $\mathcal{L}_H(D(A)') / N_{B_H}$ to $\mathcal{L}(H,D(A)')$ such that
\begin{equation}
    \forall T \in \mathcal{L}_H(D(A)'), \ \mathcal{F}_K(\pi(T)) = F(T).
\end{equation}

Now this operator allows us to make sense of \eqref{eq:weak-conjug}, we simply say that $\mathcal{T}(A+BK)=D\mathcal{T}$ if $\mathcal{F}_K \mathcal{T} = 0$. Finally to define weak $F$-equivalence, we need to make sense of $\mathcal{T}B=B$, for this we define the following affine subspace  \begin{equation}
    \mathcal{F}_B = \{ T \in \L_H(D(A)') \, | \, TB = B \, \text{in} \, D(A)'\}.
\end{equation}
Then we define $\mathcal{T}B=B$ as $\mathcal{T} \in \pi(\mathcal{F}_B)$.

\begin{defn}[Weak \(F\)-equivalence]
Let \((\mathcal{T}, K) \in  \L_H(D(A)') / N_{B_H} \times \mathcal{L}(L_\lambda, \C^{\ml})\). We say that \((\mathcal{T}, K)\) is a weak \(F\)-equivalence of \((A, B, D)\), or that it is a weak \(F\)-equivalence between \((A, B)\) and \(D\), if 
\begin{equation}
    \mathcal{T} \in \pi(\mathcal{F}_B) \cap \ker \mathcal{F}_K.
\end{equation}
The above condition can also be written with the previous notations as
\begin{equation}\label{eq:weak-F-eq}
    \begin{cases}
        \mathcal{T}(A+BK)=D\mathcal{T}, \\ \mathcal{T}B=B.
    \end{cases}
\end{equation}
\end{defn}
Equation \eqref{eq:weak-F-eq} clearly explains why the previous definition is called a weak $F$-equivalence. Finally the next proposition shows that finding a weak $F$-equivalence is a well-posed problem and the solution is linked to parabolic $F$-equivalence.

\begin{prop}[Uniqueness of Weak $F$-Equivalence]
Let $(T_*, K_*)$ be a parabolic $F$-equivalence of $(A,B,D)$. Then $(\pi(T_*), K_*)$ is the unique weak $F$-equivalence of $(A,B,D)$.
\end{prop}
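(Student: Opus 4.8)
The plan is to treat existence and uniqueness separately, the existence being immediate from the factorization of $F_{K_*}$ through $\pi$, and the uniqueness resting on the structural description already obtained in Proposition \ref{prop:partial-uniqueness}. First I would verify that $(\pi(T_*),K_*)$ is a weak $F$-equivalence. Since $T_*B=B$ we have $T_*\in\mathcal{F}_B$, hence $\pi(T_*)\in\pi(\mathcal{F}_B)$; and since $T_*(A+BK_*)=DT_*$, the factorization identity gives $\mathcal{F}_{K_*}(\pi(T_*))=F_{K_*}(T_*)=T_*(A+BK_*)-DT_*=0$, so $\pi(T_*)\in\ker\mathcal{F}_{K_*}$. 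Thus $\pi(T_*)\in\pi(\mathcal{F}_B)\cap\ker\mathcal{F}_{K_*}$, as required.

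For uniqueness, I would take an arbitrary weak $F$-equivalence $(\mathcal{T},K)$ and lift $\mathcal{T}$ to a representative $T\in\mathcal{F}_B$, which is possible because $\mathcal{T}\in\pi(\mathcal{F}_B)$. Then $TB=B$, and since $\mathcal{F}_K(\mathcal{T})=0$ the factorization yields $F_K(T)=\mathcal{F}_K(\pi(T))=0$, i.e.\ $T(A+BK)=DT$. These are precisely the two algebraic identities used in the proof of Proposition \ref{prop:partial-uniqueness}; the only feature of a genuine parabolic $F$-equivalence not yet in hand is the invertibility of $T$. I would then re-run the argument of that proposition verbatim: for $n>N(\lambda)$ one gets that $Te_n$ is a $\lambda_n$-eigenvector of $D$, and $\mu\notin\{\lambda_l-\lambda_h\}_{h\geq l\geq 1}$ forces $T_{HL}=0$ and $T_{HH}\in C(A_H)$; for $n\leq N(\lambda)$ the low-frequency projection yields $T_{LL}(A_L+B_LK)=(A_L-\mu)T_{LL}$ together with $T_{LL}B_L=B_L$.

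The step where invertibility genuinely intervenes is the appeal to Theorem \ref{thm:ffeq} to conclude $T_{LL}=\tilde T$ and $K=\tilde K=K_*$, and this is the point I expect to require the most care, since in the weak formulation no invertibility of the representative $T$ is assumed. To supply it, I would observe that the range of $T_{LL}$ is $A_L$-invariant (from $(A_L-\mu)T_{LL}=T_{LL}(A_L+B_LK)$) and contains the range of $B_L$ (from $T_{LL}B_L=B_L$); hence it contains the whole controllable subspace $\sum_{k\geq 0}A_L^k\,\mathrm{Range}(B_L)$. The $F_\lambda$-admissibility of $B$ — simple eigenvalues in each $L_\lambda^j$ and non-vanishing pairings — makes $(A_L,B_L)$ controllable, as recorded after \hyperref[hyp:B]{$(H_{B})$} and via the Fattorini criterion (Lemma \ref{lem:Fattorini}). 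Thus this subspace is all of $L_\lambda$, so $T_{LL}$ is surjective, hence invertible, and Theorem \ref{thm:ffeq} then forces $T_{LL}=\tilde T$ and $K=K_*$.

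Finally, with $K=K_*$ fixed, the explicit formula for $\tau$ in Proposition \ref{prop:partial-uniqueness} depends only on $B_HK_*$, so $T$ and $T_*$ share identical $LL$-, $HL$- and $LH$-blocks, namely $\tilde T$, $0$ and $\tau$. Therefore $N:=T-T_*$ is supported on the $HH$-block and lies in $C(A_H)$, and the identity $NB=TB-T_*B=0$ reduces to $(T_{HH}-T_{*,HH})B_H=0$, that is $N\in N_{B_H}$. Consequently $\mathcal{T}=\pi(T)=\pi(T_*)$ and $K=K_*$, giving both the existence and the claimed uniqueness of the weak $F$-equivalence.
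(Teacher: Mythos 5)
Your proof is correct and follows the same skeleton as the paper's: verify that $(\pi(T_*),K_*)$ is a weak $F$-equivalence via the factorization $\mathcal{F}_{K}\circ\pi=F_{K}$, lift an arbitrary weak solution to a representative $T$ with $TB=B$ and $F_K(T)=0$, rerun the block analysis of Proposition \ref{prop:partial-uniqueness}, and conclude that $T-T_*$ sits in the $HH$-block and annihilates $B_H$, hence lies in $N_{B_H}$. The one place where you genuinely go beyond the paper is worth highlighting: the paper disposes of the middle step with ``the same reasoning as in the proof of Proposition \ref{prop:partial-uniqueness}'', but that reasoning invokes Theorem \ref{thm:ffeq}, whose uniqueness statement is only among \emph{invertible} $T_{LL}$, and a representative of a weak $F$-equivalence is a priori just an element of $\mathcal{L}_H(D(A)')$ with no invertibility. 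Your patch — the range of $T_{LL}$ is $A_L$-invariant by $(A_L-\mu)T_{LL}=T_{LL}(A_L+B_LK)$ and contains $\mathrm{Range}(B_L)$ by $T_{LL}B_L=B_L$, hence contains the Kalman reachable subspace, which equals $L_\lambda$ by the controllability of $(A_L,B_L)$ guaranteed by $F_\lambda$-admissibility, so $T_{LL}$ is surjective and therefore invertible in finite dimension — is exactly what is needed to legitimately apply Theorem \ref{thm:ffeq} and get $T_{LL}=\tilde T$, $K=K_*$. The remaining identifications ($T_{HL}=0$, $T_{HH}\in C(A_H)$, and $T_{LH}=\tau$ determined solely by $B_H$ and $K_*$) indeed require no invertibility, so your argument closes the gap cleanly.
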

\begin{proof}
First, note that $(\pi(T_*), K_*)$ is indeed a weak $F$-equivalence. Now let $K\in \L(L_\lambda,\C^{\ml})$, $T \in \mathcal{L}_H(D(A)')$ such that $TB = B$ and 
\begin{equation}
    \pi(T)(A+BK) = D\pi(T).
\end{equation}
This implies that $\mathcal{F}_K(\pi(T)) = F_K(T) = 0$. The same reasoning as in the proof of Proposition \ref{prop:partial-uniqueness} shows that $K=K_*$ and $T - T_* \in C(A_H)$. Since $TB = B$ and $T_*B=B$, we have $(T-T_*)B=(T-T_*)_{HH}B=0$, hence $T-T_* \in N_{B_H}$, which gives $\pi(T) = \pi(T_*)$. 
\end{proof}

\subsection{Approximate controllability}\label{subsec:link-control} 
One might ask when finding a parabolic $F$-equivalence becomes a well-posed problem on its own. As we have shown in Proposition \ref{prop:sol-feq}, the issue of uniqueness is entirely due to the size of $N_{B_H}$, and having a unique solution is equivalent to $N_{B_H} = \{ 0 \}$. Therefore, in this subsection, we fix $A$, $\lambda$, $\mu$, and $D$ as before, but we let $B$ free.
%\textcolor{red}{
\begin{defn}\label{defn:approx-control}
    Let $B \in (D(A)')^{\ml}$ \textcolor{black}{and $\tau>0$}, we say that $(A,B)$ is approximately controllable \textcolor{black}{(in time $\tau$)} if for all $u_0,u_1 \in H$ and any $\varepsilon > 0$, there exists $w \in L^2(0,\tau;\C^{\ml})$ such that the solution of the following system
    \begin{equation}
        \begin{cases}
            \partial_t u(t) = Au(t) + Bw(t), \ \forall t \in (0,\tau), \\
            u(0) = u_0,
        \end{cases}
    \end{equation}
    satisfies $\|u(\tau) - u_1\| \leq \varepsilon$.
\end{defn}
\begin{remark}
    By Lemma \ref{lem:Fattorini}, as for finite-dimensional systems, the approximate controllability of $(A,B)$ is in fact, independent of $\tau$. Hence, we will simply refer to the approximate controllability of $(A,B)$ without specifying a final time.
\end{remark}
%}
Here our goal is to relate the size of $N_{B_H}$ to the approximate controllability of $(A,B)$ by proving the following theorem.
\begin{thm}\label{thm:approx-control-nbh}
    Let $B \in (D(A)')^{\ml}$ be $F_\lambda$-admissible. Then $N_{B_H} = \{0\}$ if and only if $(A,B)$ is approximately controllable.
\end{thm}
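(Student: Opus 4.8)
The plan is to reduce both sides of the equivalence to a single spectral condition on the high-frequency eigenspaces of $A$: on one side via the Fattorini--Hautus criterion (Lemma \ref{lem:Fattorini}), and on the other via an explicit block description of the commutant algebra $C(A_H)$. Throughout I would use that $A$ is normal (so $e_n$ is an eigenvector of both $A$ and $A^*$) and that each eigenvalue has finite multiplicity by \ref{item:A2}.

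\emph{Step 1: structure of $C(A_H)$.} Let $(\nu_i)_i$ denote the distinct eigenvalues with $\Re(\nu_i) < -\lambda$ and $E_i \subset H_\lambda$ the corresponding finite-dimensional eigenspaces. If $M \in C(A_H)$ and $v \in E_i$, then $A_H M v = M A_H v = \nu_i M v$, so $Mv \in E_i$; since $\bigoplus_i E_i$ is dense and $M$ is bounded, every $M \in C(A_H)$ is block-diagonal, $M = \bigoplus_i M_i$ with $M_i \in \mathcal{L}(E_i)$, and conversely any such family with $\sup_i \|M_i\| < +\infty$ defines an element of $C(A_H)$ (the $D(A)'_\lambda$- and $H_\lambda$-operator norms of $M_i$ coincide because $M_i$ preserves the single eigenspace $E_i$, on which the two norms differ only by the scalar $\sqrt{1+|\nu_i|^2}$).

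\emph{Step 2: translating $M B_H = 0$.} Writing $B_{j,H} := P_H B_j$ and $V_i := \mathrm{span}\{P_{E_i} B_{j,H} : 1 \le j \le \ml\} \subset E_i$, the identity $M B_{j,H} = \sum_i M_i P_{E_i} B_{j,H}$ shows that $M \in N_{B_H}$ iff $M_i|_{V_i} = 0$ for every $i$. Hence $N_{B_H} = \{0\}$ iff $V_i = E_i$ for all $i$: if some $V_i \subsetneq E_i$, choosing $0 \neq \psi \in V_i^\perp \cap E_i$ and $0 \neq \xi \in E_i$ and setting $M_i v = \langle v, \psi\rangle \xi$ (zero on all other blocks) produces a nonzero bounded element of $N_{B_H}$, which is exactly where finite-dimensionality of $E_i$ enters; conversely, if all $V_i = E_i$ then $M_i|_{V_i} = 0$ forces $M_i = 0$, so $M = 0$.

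\emph{Step 3: matching with approximate controllability.} By Lemma \ref{lem:Fattorini}, $(A,B)$ is approximately controllable iff no nonzero eigenvector $\phi$ of $A^*$ satisfies $\langle B_j, \phi\rangle = 0$ for all $j$. I would split the eigenvectors into low- and high-frequency ones. For $\phi \in E_i$ (high frequency) one has $\langle B_j,\phi\rangle = \langle B_{j,H},\phi\rangle$, and the conditions $\langle B_j,\phi\rangle = 0$ for all $j$ and $\phi \perp V_i$ coincide; thus ``no nonzero high-frequency $\phi$ is annihilated'' is precisely $V_i = E_i$ for all $i$. Finally, $F_\lambda$-admissibility already excludes annihilated low-frequency eigenvectors: using the block decomposition of Lemma \ref{lem:decomp1}, a low-frequency eigenvector $\phi$ for an eigenvalue $\nu$ decomposes as $\phi = \sum_l a_l\, e^{j_l}$ over the blocks $j_l$ carrying $\nu$ (where $\nu$ is simple in each block), and since $B_{j_l} \in D(A_{j_l})'$ is supported in block $j_l$, testing gives $a_l \langle B_{j_l}, e^{j_l}\rangle = 0$, whence $a_l = 0$ by admissibility and $\phi = 0$. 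Therefore approximate controllability is equivalent to $V_i = E_i$ for all $i$, that is, to $N_{B_H} = \{0\}$, which proves the theorem. The main obstacle I anticipate is not any deep difficulty but the careful bookkeeping of the duality pairings and the block decomposition---in particular verifying that each $B_j$ pairs only with block-$j$ components and that the $D(A)'$/$H$ identifications on the finite-dimensional eigenspaces $E_i$ are consistent.
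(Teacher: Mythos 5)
Your proposal is correct and follows essentially the same route as the paper's proof: both rest on Lemma \ref{lem:Fattorini}, both use that the commutation with $A_H$ forces elements of $C(A_H)$ to act block-diagonally on the finite-dimensional eigenspaces (your condition $V_i = E_i$ is exactly the paper's $\operatorname{rank}(\mathcal{B}_n) = l_n$), and both invoke $F_\lambda$-admissibility to dispose of the low-frequency eigenvalues. The only cosmetic difference is that you verify the low-frequency rank condition by testing an eigenvector directly against the block-supported $B_j$, where the paper passes through Kalman controllability of $(A_L, B_L)$ and the finite-dimensional Fattorini criterion.
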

The above theorem immediately answers our question and provides a new characterization of approximate controllability for parabolic systems.
\begin{cor}
    Let $B \in (D(A)')^{\ml}$ be $F_\lambda$-admissible. Then $(A,B)$ is approximately controllable if and only if there exists a unique parabolic $F$-equivalence of $(A,B,D)$.
\end{cor}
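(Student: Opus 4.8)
The plan is to read off the corollary as an immediate combination of three ingredients already in hand: the structural description of the full solution set in Proposition \ref{prop:sol-feq}, a single topological remark about invertibility, and the characterization of $N_{B_H}$ in Theorem \ref{thm:approx-control-nbh}. Since $B$ is $F_\lambda$-admissible and we work with $\mu \in \Lambda$, Theorem \ref{thm:main} already produces one parabolic $F$-equivalence $(T_*, K_*)$ of $(A,B,D)$, so the existence half is automatic and the whole content of the statement is the equivalence between \emph{uniqueness} and approximate controllability. By Proposition \ref{prop:sol-feq} the set $\mathcal{S}$ of all parabolic $F$-equivalences is exactly $(\pi(T_*)\cap\mathcal{GL}_H(D(A)'))\times\{K_*\}$, with $\pi(T_*)=T_*+N_{B_H}$; in particular the feedback is forced to equal $K_*$, so uniqueness of the pair $(T,K)$ is equivalent to $\pi(T_*)\cap\mathcal{GL}_H(D(A)')$ being a single point.

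Next I would reduce this last condition to $N_{B_H}=\{0\}$. If $N_{B_H}=\{0\}$, then $\pi(T_*)=\{T_*\}$, which already lies in $\mathcal{GL}_H(D(A)')$, so the intersection is the singleton $\{T_*\}$ and the $F$-equivalence is unique. For the converse, suppose $N_{B_H}\neq\{0\}$ and fix a nonzero $N\in N_{B_H}$. The decisive point is that $\mathcal{GL}_H(D(A)')$ is the group of units of the Banach algebra $\mathcal{L}_H(D(A)')$ (Proposition \ref{prop:algebra}), hence open, and $T_*\in\mathcal{GL}_H(D(A)')$; therefore there is $\varepsilon_0>0$ with $T_*+tN\in\mathcal{GL}_H(D(A)')$ for all $|t|<\varepsilon_0$. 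Each such $T_*+tN$ belongs to $\pi(T_*)$ (since $tN\in N_{B_H}$), so by Proposition \ref{prop:sol-feq} the pair $(T_*+tN,K_*)$ lies in $\mathcal{S}$. Thus $\mathcal{S}$ contains infinitely many distinct elements and uniqueness fails. This establishes that the parabolic $F$-equivalence is unique if and only if $N_{B_H}=\{0\}$.

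Finally, Theorem \ref{thm:approx-control-nbh} asserts that, for $F_\lambda$-admissible $B$, one has $N_{B_H}=\{0\}$ if and only if $(A,B)$ is approximately controllable. Chaining this with the previous equivalence yields the corollary. I expect the only delicate step to be the converse direction of the reduction: a nonzero element of $N_{B_H}$ must be shown to yield a genuine \emph{second invertible} solution, not merely a second element of the affine space $\pi(T_*)$, since $\pi(T_*)$ is not contained in $\mathcal{GL}_H(D(A)')$ in general. This is precisely what the openness of the unit group supplies, so it is worth invoking Proposition \ref{prop:algebra} explicitly at that point rather than treating the perturbation as obviously invertible.
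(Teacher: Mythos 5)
Your proposal is correct and follows essentially the same route as the paper, which also deduces the corollary by combining Proposition \ref{prop:sol-feq} (uniqueness of the pair reduces to $\pi(T_*)\cap\mathcal{GL}_H(D(A)')$ being a singleton, hence to $N_{B_H}=\{0\}$) with Theorem \ref{thm:approx-control-nbh}. Your explicit appeal to the openness of the unit group of the Banach algebra $\mathcal{L}_H(D(A)')$ to produce a second \emph{invertible} solution $T_*+tN$ from a nonzero $N\in N_{B_H}$ is a detail the paper leaves implicit, and it is exactly the right way to close that step.
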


To prove Theorem \ref{thm:approx-control-nbh}, we will use the generalized Fattorini criterion introduced by Badra and Takahashi in \cite{badra2014fattorini}. Let $B = (B_1, \dots, B_m) \in (D(A)')^{m}$,
%\todo[author=Amaury]{Est-ce qu'on veut le voir comme un élément de $\mathcal{L}(\mathbb{C}^m,(D(A)')^{m})$ ou de $\mathcal{L}(\mathbb{C}^m,D(A)')$ en partant du principe que c'est bien $\mathcal{L}(\mathbb{C}^m,D(A)')$ il faut donner, je pense, une petite definition, ou bien rappeller l'endroit où on l'a vu comme un tel élément pour la première fois}
%\todo[author=Amaury]{Est-ce qu'on a vraiment besoin de l'adjoint dans la suite ? Bien sûr c'est très lié au Lemma 6.6, mais dans la version actuelle je ne sais pas si on voit bien l'utilisation de $B^{*}$ et un lecteur pourrait se poser la question}
since $A$ is normal, we have $\ker (A^* - \overline{\lambda_n}) = \ker(A - \lambda_n)$, hence $D(A)' = \bigoplus_{n \geq 1} \ker (A^* - \overline{\lambda_n})$. We set $l_n := \dim \ker(A - \lambda_n)$, which allows us to find a partition of $(e_n)_{n \geq 1}$ as follows.\footnote{Note that we will work with $(e_n)_{n \geq 1}$, which is orthogonal but not orthonormal in $D(A)'$.} For each $n \geq 1$, we denote by $(\varepsilon_k^n)_{1 \leq k \leq l_n}$ a basis of $\ker(A - \lambda_n)$ formed by the elements
%\todo[author=Amaury]{ Si on veut dire que les $\varepsilon_{k}^{n}$ sont des $(e_{n})$ qu'on a renuméroté, on peut le dire plus explicitement.}
of $(e_k)_{k \geq 1}$, hence $(\varepsilon_k^n)$ is a reordering of $(e_n)$. Now, for $j \in \{1, \dots, m\}$, we can write in $D(A)'$
\begin{equation}
    B_j = \sum_{n \geq 1} \sum_{k=1}^{l_n} b_k^{j,n} \varepsilon_k^n.
\end{equation}
With this, we can now state the criterion for approximate controllability.

\begin{lem}[Fattorini-Badra-Takahashi Criterion]\label{lem:Fattorini}
    Let $B = (B_1, \dots, B_{\ml}) \in (D(A)')^{\ml}$. The pair $(A,B)$ is approximately controllable (for any time $\tau > 0$) if and only if for every $n \geq 1$, $\textnormal{rank}(\mathcal{B}_n) = l_n$, where $\mathcal{B}_n$ is given by
    \begin{equation}
        \mathcal{B}_n = \begin{pmatrix}
            b_1^{1,n} & b_2^{1,n} & \dots & b_{l_n}^{1,n} \\
            \vdots & \vdots & & \vdots \\
            b_1^{{\ml},n} & b_2^{{\ml},n} & \dots & b_{l_n}^{{\ml},n}
        \end{pmatrix}.
    \end{equation}
\end{lem}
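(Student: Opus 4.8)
The plan is to prove both implications via the classical duality between approximate controllability and a unique continuation property for the adjoint semigroup, and then to convert that property into the rank condition through a residue (eigenprojection) argument.

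First I would set up the duality. Because $A$ is diagonal parabolic it generates an analytic semigroup, so for $t>0$ the smoothing of $e^{tA^*}$ maps $H$ into $D(A^*)=D(A)$; hence $B^*e^{tA^*}\phi=\big(\scalp{B_1}{e^{tA^*}\phi}_{D(A)',D(A)},\dots,\scalp{B_{\ml}}{e^{tA^*}\phi}_{D(A)',D(A)}\big)$ is well defined for every $\phi\in H$, even though $B$ is unbounded. By Hahn--Banach, the reachable set $\mathcal{R}_\tau=\{\int_0^\tau e^{(\tau-s)A}Bw(s)\,ds : w\in L^2(0,\tau;\C^{\ml})\}$ is dense in $H$ if and only if the only $\phi\in H$ orthogonal to it is $0$; pairing $\phi$ against a reachable vector and applying Fubini rewrites this orthogonality as $\scalp{B_j}{e^{tA^*}\phi}_{D(A)',D(A)}=0$ for a.e.\ $t\in(0,\tau)$ and every $j$. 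So approximate controllability in time $\tau$ is equivalent to the implication: $B^*e^{tA^*}\phi=0$ on $(0,\tau)$ forces $\phi=0$.

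Next I would eliminate the time variable. For fixed $\phi$ the function $t\mapsto B_j^*e^{tA^*}\phi$ is holomorphic on a sector around $(0,\infty)$, so its vanishing on $(0,\tau)$ is equivalent to its vanishing on all of $(0,\infty)$ --- which also explains the $\tau$-independence noted in the remark. Taking Laplace transforms gives
\begin{equation}
    \int_0^{\infty} e^{-zt}\,B_j^*e^{tA^*}\phi\,dt = B_j^*R(z,A^*)\phi
\end{equation}
for $\Re z$ large, so $z\mapsto B_j^*R(z,A^*)\phi$ vanishes on a half-plane and hence, being meromorphic on the resolvent set, identically. Since $R(z,A^*)$ has a simple pole at each $z=\overline{\lambda_n}$ with residue the orthogonal eigenprojection $P_n$ onto $\ker(A^*-\overline{\lambda_n})=\ker(A-\lambda_n)$, this forces $B_j^*P_n\phi=0$ for all $n$ and $j$. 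Writing $P_n\phi=\sum_{k=1}^{l_n}\phi_k^n\varepsilon_k^n$ with $\phi_k^n=\scalp{\phi}{\varepsilon_k^n}_H$ and using $\scalp{B_j}{\varepsilon_k^n}_{D(A)',D(A)}=b_k^{j,n}$, the relations $B_j^*P_n\phi=0$ become $\mathcal{B}_n\Phi_n=0$, where $\Phi_n\in\C^{l_n}$ collects the entries $\overline{\phi_k^n}$. Conversely the same computation shows that $\mathcal{B}_n\Phi_n=0$ for all $n$ implies $B^*e^{tA^*}\phi=0$. As $\phi=0$ is equivalent to $\Phi_n=0$ for every $n$, the continuation property holds if and only if each $\mathcal{B}_n$ is injective on $\C^{l_n}$, i.e.\ $\textnormal{rank}(\mathcal{B}_n)=l_n$; combined with the duality step this is exactly the claimed equivalence.

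The main obstacle I expect is the well-posedness and duality with the unbounded control operator $B\in(D(A)')^{\ml}$: one must justify that $\int_0^\tau e^{(\tau-s)A}Bw(s)\,ds$ is a genuine element of $H$ and that the orthogonality computation is licit, both relying on the analytic smoothing of the semigroup, together with the interchange of the Laplace integral with the spectral expansion that locates the residues. For the easy implication (rank failing $\Rightarrow$ not approximately controllable) none of this machinery is needed: if $\ker\mathcal{B}_{n_0}\neq\{0\}$, choosing $0\neq\Psi\in\ker\mathcal{B}_{n_0}$ and setting $\phi=\sum_k\overline{\Psi_k}\,\varepsilon_k^{n_0}\in D(A)$ gives $B_j^*e^{tA^*}\phi=e^{t\lambda_{n_0}}(\mathcal{B}_{n_0}\Psi)_j=0$, so the nonzero vector $\phi$ is orthogonal to the reachable set and controllability fails.
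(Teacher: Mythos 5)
Your proposal takes a genuinely different route from the paper, for the simple reason that the paper does not really prove this lemma: its proof is a one-line citation of Theorem 1.3 of Badra--Takahashi \cite{badra2014fattorini} (invoking their Remark 2.1 to allow $\gamma=1$, i.e.\ control operators all the way down in $(D(A^*))'$). What you wrote is a self-contained version of the argument that sits behind that theorem: duality between approximate controllability and a unique continuation property for $e^{tA^*}$, elimination of the time variable by analyticity (which also correctly explains the $\tau$-independence noted in the paper's remark), Laplace transform to the resolvent, and a residue computation at each $\overline{\lambda_n}$ reducing unique continuation to $\ker\mathcal{B}_n=\{0\}$, i.e.\ $\mathrm{rank}(\mathcal{B}_n)=l_n$. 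The skeleton of this argument is sound, and your last paragraph (rank failure $\Rightarrow$ an explicit nonzero eigenvector combination annihilated by $B^*e^{tA^*}$) is complete and correct as written.

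There is, however, one concrete gap, precisely at the point you flagged as the "main obstacle" but then dismissed by appealing to analytic smoothing. For $B\in (D(A)')^{\ml}$, smoothing only gives $\|e^{tA}B_j\|_H \leq C\,t^{-1}\|B_j\|_{D(A)'}$ near $t=0$, and $t^{-1}\notin L^2(0,\tau)$, so $\int_0^\tau e^{(\tau-s)A}Bw(s)\,ds$ need \emph{not} converge in $H$ for $w\in L^2(0,\tau;\C^{\ml})$; requiring that it does is exactly the admissibility condition the paper insists on avoiding. The repair is to run the duality in the extrapolation space: the integrand is bounded in $D(A)'$ by $C|w(s)|$, so mild solutions and the reachable set live in $D(A)'$, approximate controllability (Definition \ref{defn:approx-control}) must be read as density in $D(A)'$ --- this is also how \cite{badra2014fattorini} set up the case $\gamma=1$ --- and the Hahn--Banach separating functional is then some $\phi\in D(A)=D(A^*)$ rather than $\phi\in H$. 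With that modification the rest of your proof goes through verbatim: $t\mapsto \scalp{B_j}{e^{tA^*}\phi}_{D(A)',D(A)}$ is still holomorphic on a sector, the identity theorem and the residues at $\overline{\lambda_n}$ still yield $\mathcal{B}_n\Phi_n=0$ for all $n$, and the eigenvector combinations used in the converse direction already lie in $D(A)$, so both implications survive unchanged.
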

\begin{proof}
    This is a direct application of Theorem 1.3 (using Remark 2.1 which allows us to take $\gamma=1$) in \cite{badra2014fattorini}.
\end{proof}
With this criterion we are now able to prove our theorem.
\begin{proof}[Proof of Theorem \ref{thm:approx-control-nbh}]
   Let $C \in C(A_H)$. First we give a characterization of $CB_H = 0$ using a collection of infinite scalar linear systems.
   
   Notice that $CB_H = 0$ is equivalent to $C{B_j}_H = 0$ for all $j \in \{1,\dots, {\ml}\}$. Now, for $n > N(\lambda)$, if we denote by $P_n$ the orthogonal projection onto $\ker(A_n - \lambda_n)$, we have $CP_nB = P_nCB$ because $CA_H = A_HC$. Hence, we have the following characterization of $CB_H = 0$
    \begin{equation}\label{eq:CPNB}
        \forall n > N(\lambda), \forall j \in \{1,\dots, {\ml}\}, \ CP_n{B_j}_H = 0.
    \end{equation}
    Thus, for each $n > N(\lambda)$, we have a finite-dimensional linear system. For $n \geq 0$ and for every $k \in \{1,\dots, l_n\}$, we set $c^n_k := C\varepsilon_{k}^n \in \ker(A - \lambda_n)$. Then, using \eqref{eq:CPNB}, we have 
    \begin{equation}
        CB_H = 0 \iff \forall n > N(\lambda), \quad \begin{cases}
            b_1^{1,n} c_1^n + \dots + b_{l_n}^{1,n} c_{l_n}^n = 0, \\
            b_1^{2,n} c_1^n + \dots + b_{l_n}^{2,n} c_{l_n}^n = 0, \\
            \vdots \\
            b_1^{{\ml},n} c_1^n + \dots + b_{l_n}^{{\ml},n} c_{l_n}^n = 0.
        \end{cases} 
    \end{equation}
    Now, to obtain scalar linear equations, we fix $n > N(\lambda)$. Then for each $k \in \{1,\dots, l_n\}$, we set $x_k := (\langle c_1^n, \varepsilon_k^n \rangle, \ \dots \ , \langle c_{l_n}^n, \varepsilon_k^n \rangle)^T \in \mathbb{C}^{l_n}$, and thus we have
    
    \begin{equation}\label{eq:carac-sys}
        \begin{cases}
            b_1^{1,n} c_1^n + \dots + b_{l_n}^{1,n} c_{l_n}^n = 0, \\
            b_1^{2,n} c_1^n + \dots + b_{l_n}^{2,n} c_{l_n}^n = 0, \\
            \vdots \\
            b_1^{{\ml},n} c_1^n + \dots + b_{l_n}^{{\ml},n} c_{l_n}^n = 0.
        \end{cases} \iff \forall k \in \{1,\dots, l_n\}, \ \mathcal{B}_n x_k = 0.
    \end{equation}

Now, we show that $N_{B_H} = \{0\}$ is equivalent to the approximate controllability of $(A,B)$. First, suppose that $(A,B)$ is approximately controllable. Then, Lemma \ref{lem:Fattorini} ensures that for all $n > N(\lambda)$, we have $\ker \mathcal{B}_n = \{0\}$, and by the above discussion, this implies that $N_{B_H} = \{0\}$. Conversely, suppose that $N_{B_H} = \{0\}$.  First, we establish the controllability of $(A_L, B_L)$. Using the decomposition from Lemma \ref{lem:decomp1}, it suffices to show that $({A_j}_L, {B_j}_L )$ is controllable for all $j \in \{1, \dots, m(\lambda)\}$. This follows from \hyperref[hyp:B]{$(H_{B})$}, thanks to the Kalman criterion. Thus, the pair $(A_L, B_L)$ is controllable.
%\todo[author=Amaury]{A mon avis on devrait détailler un tout petit peu ici.}} 
Moreover, the Fattorini criterion applies to finite-dimensional systems as well, where approximate controllability coincides with exact controllability, hence we have
\begin{equation}
\forall n \in \{1, \dots, N(\lambda)\}, \ \text{rank} (\mathcal{B}_n) = l_n.
\end{equation}
%\todo[author=Amaury]{Ici, à mon avis on a besoin de plus d'explication en utilisant le critère pour le système de dimension finie.}
Now, suppose by contradiction that there exists $n > N(\lambda)$ such that $\text{rank} (\mathcal{B}_n) < l_n$. Hence, there exists $z \in \mathbb{C}^{l_n} \setminus \{0\}$ such that $\mathcal{B}_n z = 0$. With this, we can construct $C \in C(A_H)$ such that it is zero everywhere except on $\ker(A_n - \lambda_n)$, where we have 
\begin{equation}
    \forall k \in \{1,\dots, l_n\}, \ C\varepsilon_k^n = z_k \varepsilon_1^n.
\end{equation}
%\todo[author=Amaury]{J'ai l'impression qu'il y a une petite ambiguité entre les vecteurs $z_{k}$ avant et les coordonnées de $z$, peut-être qu'on pourrait appeler le vecteur $z$ autrement?}
Then, using \eqref{eq:carac-sys} and the above discussion, we would have $CB_H = 0$, which is a contradiction. Hence, by Lemma \ref{lem:Fattorini}, $(A,B)$ is approximately controllable.

\end{proof}
\begin{remark}
Working with an operator $A$ such that $m(\lambda) \underset{\lambda \rightarrow +\infty}{\rightarrow} +\infty$,
%\todo[author=Amaury]{Peut-être qu'on pourrait utiliser $\lambda$ ici plutôt que $\lambda'$? Je pense qu'avec la limite c'est suffisamment explicite que ce n'est pas le $\lambda$ avec lequel on travail depuis le début}
implies that $(A,B)$ is not approximately controllable for any $B \in (D(A)')^{{\ml}}$. \textcolor{black}{Indeed,} if $(A,B)$ is approximately controllable, then Lemma \ref{lem:Fattorini} implies $\sup_{n \geq 1} l_n < + \infty$.
%\todo[author=Amaury]{Je pense que ça vaut le coup de rajouter une courte justification ici}
Hence, in this case, we know that the problem of finding a parabolic $F$-equivalence for $(A,B,D)$ is ill-posed.
 
\end{remark}

\section{\textcolor{black}{Proof of Lemma \ref{lem:FQHE}}}
\label{sec:QHE}

\textcolor{black}{In this section, we show Lemma \ref{lem:FQHE}. We see from \eqref{eq:ql-nl-defF}} that we can decompose $\mathcal{F}$ in two parts, first the bilinear one derived from $B(u,v) = \div(\tilde{D}(u) \nabla v)$, and the remaining part $\widetilde{f}(u)$. Hence it suffices to check that both parts satisfy Assumption \ref{item:F1} to conclude that $\mathcal{F}$ satisfies it too. In order to do this, we need the following technical lemmas.
\begin{lem}\label{lem:QL-NL-f}
    Let $u,v \in H^s(\Omega)$ and let $g \in C^{1,1}_{\text{loc}}(\R)$. \textcolor{black}{Then} $g(u), g(v)\in H^{s-\varepsilon/2}(\Omega)$ and there exists $C>0$ such that
    \begin{equation}\label{eq:ql-nl-f}
        \|g(u) - g(v)\|_{H^{s-\varepsilon/2}(\Omega)} \leq C \Phi(\|u\|_{H^s} + \|v\|_{H^s}) \|u - v\|_{H^s(\Omega)}\textcolor{black}{,}
    \end{equation} 
    \textcolor{black}{w}here $\Phi : \R_{\geq 0} \to \R_{\geq 0}$ is defined as follows
    \begin{equation}\label{eq:ql-nl-phi}
        \Phi(r) = C_s \sup_{|x| \leq C_s r} |g'(x)| + C_s r \sup_{|x|,|y| \leq C_s r, x \neq y} \frac{|g'(x) - g'(y)|}{|x-y|},
    \end{equation}
    where $C_s>0$ is linked to the Sobolev constant of the embedding $H^s(\Omega) \hookrightarrow C^{0,\varepsilon}(\overline{\Omega})$.
 \end{lem}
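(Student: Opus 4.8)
The plan is to reduce everything to the embedding $H^s(\Omega)\hookrightarrow C^{0,\varepsilon}(\overline{\Omega})$, valid because $s=d/2+\varepsilon$ with $d=2$, together with one fractional product estimate. Throughout I write $\sigma:=\varepsilon/2$, so that the target exponent is $s-\varepsilon/2=1+\sigma$ and the first-order differentiation index is $s-1=\varepsilon$. Since $\sigma\in(0,1)$, the $H^{1+\sigma}(\Omega)$ norm is equivalent to $\|w\|_{H^1}+[\nabla w]_{H^\sigma}$, where $[\cdot]_{H^\sigma}$ is the Gagliardo seminorm. The embedding gives $\|w\|_{L^\infty}+[w]_{C^{0,\varepsilon}}\le C_s\|w\|_{H^s}$ for all $w\in H^s(\Omega)$; in particular $u,v$ take values in $[-M,M]$ with $M=C_s(\|u\|_{H^s}+\|v\|_{H^s})$, on which $g'$ is bounded by $\sup_{|x|\le M}|g'(x)|$ and Lipschitz with constant $L:=\sup_{|x|,|y|\le M,\,x\neq y}|g'(x)-g'(y)|/|x-y|$. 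These two quantities are exactly the ingredients of $\Phi$, so the whole point is to produce the final bound as $\sup|g'|$ times $\|u-v\|_{H^s}$ plus $L\cdot M$ times $\|u-v\|_{H^s}$.

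The main tool I would establish first is a Hölder–Sobolev product estimate: for $a\in C^{0,\varepsilon}(\overline{\Omega})$, $b\in H^{\sigma}(\Omega)$ and $0\le\sigma<\varepsilon<1$,
\[
\|ab\|_{H^\sigma}\lesssim \|a\|_{L^\infty}\,\|b\|_{H^\sigma}+[a]_{C^{0,\varepsilon}}\,\|b\|_{L^2}.
\]
This is proved by splitting the numerator of the Gagliardo double integral of $ab$ as $a(x)\bigl(b(x)-b(y)\bigr)+\bigl(a(x)-a(y)\bigr)b(y)$; the first piece is controlled by $\|a\|_{L^\infty}[b]_{H^\sigma}$, and in the second the bound $|a(x)-a(y)|\le[a]_{C^{0,\varepsilon}}|x-y|^\varepsilon$ turns the kernel into $|x-y|^{2\varepsilon-2\sigma-d}$, whose inner integral over $\Omega$ is finite precisely because $\sigma<\varepsilon$. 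Note that the slack $\sigma=\varepsilon/2<\varepsilon$ coming from the $\varepsilon/2$–loss in the target space is what makes this convergent.

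With this in hand the membership statement and the easy half of the estimate follow quickly. For membership, $g(u)\in L^\infty$ and $\nabla(g(u))=g'(u)\nabla u$; applying the product estimate with $a=g'(u)\in C^{0,\varepsilon}$ (using $\|g'(u)\|_{C^{0,\varepsilon}}\lesssim\sup|g'|+L\,\|u\|_{H^s}$) and $b=\nabla u\in H^{s-1}=H^\varepsilon\subset H^\sigma$ yields $g(u)\in H^{1+\sigma}=H^{s-\varepsilon/2}$. For the difference I would work at the level of gradients and split
\[
\nabla\bigl(g(u)-g(v)\bigr)=\underbrace{g'(u)\,\nabla(u-v)}_{\mathrm{I}}+\underbrace{\bigl(g'(u)-g'(v)\bigr)\,\nabla v}_{\mathrm{II}},
\]
bounding the $H^1$–part of $g(u)-g(v)$ directly by $\sup|g'|\,\|u-v\|_{H^1}+L\,\|u-v\|_{L^\infty}\|\nabla v\|_{L^2}$. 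Term $\mathrm{I}$ is handled by the product estimate with $a=g'(u)$ and $b=\nabla(u-v)\in H^\varepsilon$, giving $\|\mathrm{I}\|_{H^\sigma}\lesssim(\sup|g'|+L\|u\|_{H^s})\|u-v\|_{H^s}$, already of the form $\Phi(\|u\|_{H^s}+\|v\|_{H^s})\|u-v\|_{H^s}$.

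The step I expect to be the main obstacle is term $\mathrm{II}$. Applying the product estimate with $a=g'(u)-g'(v)$ and $b=\nabla v\in H^\varepsilon$, the first contribution $\|a\|_{L^\infty}\|b\|_{H^\sigma}$ is harmless: $\|a\|_{L^\infty}\le L\|u-v\|_{L^\infty}\le LC_s\|u-v\|_{H^s}$ and $\|\nabla v\|_{H^\sigma}\lesssim\|v\|_{H^s}$, which is linear in $\|u-v\|_{H^s}$ and of size $L\cdot M$, matching the second term of $\Phi$. The delicate contribution is $[a]_{C^{0,\varepsilon}}\,\|\nabla v\|_{L^2}$: here the Hölder seminorm of the \emph{composition difference} $g'(u)-g'(v)$ must be tied linearly to $\|u-v\|_{H^s}$ while using only that $g'$ is Lipschitz. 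The plan is to play off the two competing pointwise bounds
\[
\bigl|(g'(u)-g'(v))(x)-(g'(u)-g'(v))(y)\bigr|\le L\bigl(|u(x)-u(y)|+|v(x)-v(y)|\bigr)
\]
and $|(g'(u)-g'(v))(x)-(g'(u)-g'(v))(y)|\le 2L\|u-v\|_{L^\infty}$, interpolating between the $|x-y|^\varepsilon$–decay of the first and the $\|u-v\|_{L^\infty}$–smallness of the second, and again exploiting the slack $\sigma<\varepsilon$ to keep the resulting fractional integral convergent. Making this interpolation produce a clean factor of $\|u-v\|_{H^s}$ (rather than a lower power) is the crux of the argument; once it is obtained, collecting the $H^1$–part together with $\|\mathrm{I}\|_{H^\sigma}$ and $\|\mathrm{II}\|_{H^\sigma}$ and invoking $\|g(u)-g(v)\|_{H^{s-\varepsilon/2}}\simeq\|g(u)-g(v)\|_{H^1}+[\nabla(g(u)-g(v))]_{H^\sigma}$ gives the stated inequality with the announced $\Phi$.
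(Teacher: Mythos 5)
Your setup, the decomposition $\nabla\bigl(g(u)-g(v)\bigr)=g'(u)\nabla(u-v)+\bigl(g'(u)-g'(v)\bigr)\nabla v$, and the H\"older--Sobolev product estimate are exactly those of the paper, and your treatment of the low-order part and of term $\mathrm{I}$ is fine. But the step you yourself flag as ``the crux'' is a genuine gap, and the interpolation you propose cannot close it. The problematic contribution is
\[
\int_\Omega\int_\Omega\frac{|w(x)-w(y)|^2\,|\nabla v(y)|^2}{|x-y|^{d+\varepsilon}}\,dx\,dy,\qquad w:=g'(u)-g'(v),
\]
and the only pointwise information that the Lipschitz continuity of $g'$ provides is $|w(x)-w(y)|\le\min\bigl(a,\,b\,|x-y|^{\varepsilon}\bigr)$ with $a\simeq L\|u-v\|_{L^\infty}$ and $b\simeq L\bigl([u]_{C^{0,\varepsilon}}+[v]_{C^{0,\varepsilon}}\bigr)$. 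Splitting at $|x-y|\simeq(a/b)^{1/\varepsilon}$ (the optimal threshold), both regions contribute comparably and the double integral is bounded only by $\varepsilon^{-1}ab\,\|\nabla v\|_{L^2}^2$; after taking the square root this yields a factor $\|u-v\|_{L^\infty}^{1/2}\bigl(\|u\|_{H^s}+\|v\|_{H^s}\bigr)^{1/2}$, i.e.\ the power $1/2$ of $\|u-v\|$, not the linear factor that \eqref{eq:ql-nl-f} asserts. Passing through an intermediate H\"older seminorm $[w]_{C^{0,\varepsilon'}}\lesssim a^{1-\varepsilon'/\varepsilon}b^{\varepsilon'/\varepsilon}$ fares no better, since convergence of the kernel forces $\varepsilon'>\varepsilon/2$ and hence an exponent $1-\varepsilon'/\varepsilon<1/2$ on $a$. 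So the ``interpolation between the two competing bounds'' provably produces a lower power, exactly the failure mode you anticipated but did not resolve.

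In fairness, the paper's own proof is no more explicit at this point: it applies the product estimate \eqref{eq:sob-hold-product} to each term and concludes by ``direct computations'', yet applying it verbatim with $h=g'(u)-g'(v)$ and $z=\nabla v$ produces the term $[g'(u)-g'(v)]_{C^{0,\varepsilon}}\|\nabla v\|_{L^2}$, for which the only bound available from $g'$ Lipschitz is $L\bigl([u]_{C^{0,\varepsilon}}+[v]_{C^{0,\varepsilon}}\bigr)\|\nabla v\|_{L^2}$ --- carrying no factor of $\|u-v\|$ at all. You have therefore correctly located the weak point rather than missed one. A clean way to obtain the linear bound is to spend one more derivative of $g$: if $g''$ exists and is locally Lipschitz, write $g'(u)-g'(v)=G\cdot(u-v)$ with $G=\int_0^1 g''\bigl(v+t(u-v)\bigr)\,dt$, so that $[g'(u)-g'(v)]_{C^{0,\varepsilon}}\lesssim\|g''\|_{L^\infty}[u-v]_{C^{0,\varepsilon}}+\mathrm{Lip}(g'')\bigl([u]_{C^{0,\varepsilon}}+[v]_{C^{0,\varepsilon}}\bigr)\|u-v\|_{L^\infty}\lesssim\|u-v\|_{H^s}$, after which the product estimate closes the argument with a constant of the form $\Phi$. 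Under the stated hypothesis $g\in C^{1,1}_{\mathrm{loc}}$ alone, neither your argument nor the paper's establishes \eqref{eq:ql-nl-f}, and the missing step appears to require either a strengthened assumption on $g$ or a genuinely different estimate.
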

 \begin{proof}
    We show inequality \eqref{eq:ql-nl-f}. 
    Let $u,v \in H^s(\Omega)$, we will use the following equivalent norm for $w \in H^{1+\sigma}(\Omega)$ (where $\sigma \in (0,1)$)
    \begin{equation}
        \|w\|_{H^{1+\sigma}(\Omega)}^2 = \|w\|_{L^2}^2 + [\nabla w]_{H^{\sigma}}^2,
    \end{equation}
    where $[\cdot]_{H^{\sigma}}^2$ is the Gagliardo seminorm defined as
    \begin{equation}
        [\textcolor{black}{f}]_{H^{\sigma}}^2 = \int_{\Omega} \int_{\Omega} \frac{|\textcolor{black}{f}(x) - \textcolor{black}{f}(y)|^2}{|x-y|^{d+2\sigma}} \,dx\,dy.
    \end{equation}
    Recall that $H^s(\Omega)$ is an algebra here, and it is continuously embedded in $C^{0,\varepsilon}(\overline{\Omega})$.
    Now we set $M= \|u\|_{L^\infty} + \|v\|_{L^\infty}$, the $L^2$ part of the norm is easy to estimate, and gives
    \begin{equation}
        \|g(u)-g(v)\|_{L^2} \leq \sup_{|x|\leq M} |g'(x)| \, \|u-v\|_{L^2}.
    \end{equation}
    Then we set $w=u-v$, hence we can write 
    \begin{equation}\label{eq:ql-decomp-lin}
        \nabla g(u) - \nabla g(v) = g'(u) \nabla w + (g'(u)-g'(v)) \nabla v.
    \end{equation}
    Let $h \in C^{0,\varepsilon}(\overline{\Omega})$ and $z \in H^{\varepsilon/2}(\Omega)$, recall the classical estimate estimate
        %\todo[author=Vincent]{J'ai cherché une référence avec clairement cette inégalité mais je n'en trouve pas, est-ce que tu en connais une par hasard ? Sinon je peux toujours la prouver rapidement, ou au moins donner un peu de détail}
        %\todo[author=Amaury]{Effectivement, je n'en ai pas trouvé non plus qui soit assez directe. Je pense que ça pourrait être bien de mettre quelques lignes pour le deuxième terme, même si ce n'est pas très difficile, certains pourrait manquer le fait que c'est justement $\varepsilon$ et pas $\varepsilon/2$ (et donc que ça marche)}
    \begin{equation}\label{eq:sob-hold-product}
        [h z]_{H^{\varepsilon/2}}^2 \leq  C_\varepsilon \|h\|_{L^\infty}^2 \|z\|_{H^{\varepsilon/2}}^2 + C_\varepsilon [h]_{C^{0,\varepsilon}}^2 \|z\|_{L^2}^2   ,
    \end{equation}
    where $[\cdot]_{C^{0,\varepsilon}}$ is the Hölder seminorm. \textcolor{black}{The proof of \eqref{eq:sob-hold-product} goes as follows. First, we write for all $x,y \in \Omega$
\begin{equation}
    |h(x)z(x)-h(y)z(y)|^2 \leq 2|h(x)(z(x)-z(y))|^2 + 2|z(y)(h(x)-h(y))|^2.
\end{equation}
Then, the first part of the previous inequality gives the bound $\|h\|_{L^\infty}^2 \|z\|_{H^{\varepsilon/2}}^2$.  
For the second part, using the fact that $h \in C^{0,\varepsilon}(\overline{\Omega})$ and $z \in H^{\varepsilon/2}(\Omega)$, we have
\begin{align}
    \int_\Omega \int_\Omega \frac{|z(y)(h(x)-h(y))|^2}{|x-y|^{d+\varepsilon}} \, dx \, dy
    &\leq [h]_{C^{0,\varepsilon}}^2 \int_\Omega |z(y)|^2 \int_{\Omega+\Omega} \frac{1}{|h|^{d-\varepsilon}} \, dh \, dy  \\
    &\leq C_\varepsilon \,[h]_{C^{0,\varepsilon}}^2 \|z\|_{L^2}^2. \notag
\end{align}
    } Now applying this estimate to each term of the right hand side of \eqref{eq:ql-decomp-lin} and using the continuous embedding $H^{s}(\Omega) \hookrightarrow C^{0,\varepsilon}(\overline{\Omega})$, allow us to conclude thanks to direct computations.
\end{proof}
\begin{remark}\label{rem:ql-nl-phi}
    Notice that as $g \in C^{1,1}_{\text{loc}}(\R)$, $\Phi$ is non-decreasing continuous at $0$, and if $g'(0)=0$, then $\Phi(0)=0$. However, $\Phi$ might not be locally Lipschitz continuous, and this is the reason we needed Assumption \ref{item:F1} to be stated this way
    (see \cite{badra2014fattorini} for comparison).
\end{remark}
 \begin{lem}\label{lem:QL-NL-D}
    Let $u \in H^s(\Omega)$ and let $h \in C^{2,1}_{\text{loc}}(\R)$ with $h(0)=0$. Then $h(u) \in H^s(\Omega)$ and there exists $C>0$ such that
    \begin{equation}\label{eq:ql-nl-D}
        \|h(u)\|_{H^{s}(\Omega)} \leq C \chi(\|u\|_{H^s}) \|u\|_{H^s(\Omega)}.
    \end{equation} 
    Where $\chi : \R_{\geq 0} \to \R_{> 0}$ is a non-decreasing function \textcolor{black}{that does not depend on $u$}.
 \end{lem}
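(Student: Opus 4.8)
The plan is to control separately the two pieces of the equivalent norm
\[
\|h(u)\|_{H^s}^2 = \|h(u)\|_{L^2}^2 + [\nabla h(u)]_{H^\varepsilon}^2
\]
introduced in the proof of Lemma \ref{lem:QL-NL-f} (recall $s=1+\varepsilon$ and $\varepsilon\in(0,1/2)$), and then read off a non-decreasing $\chi$ depending only on $\|u\|_{H^s}$. Throughout I use that $H^s(\Omega)$ is an algebra and that $H^s(\Omega)\hookrightarrow C^{0,\varepsilon}(\overline\Omega)\cap L^\infty(\Omega)$, so in particular $\|u\|_{L^\infty}\le C_s\|u\|_{H^s}$.

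The $L^2$ term is elementary: since $h(0)=0$ and $h\in C^{2,1}_{\mathrm{loc}}$, the mean value theorem gives $|h(u(x))|\le \big(\sup_{|t|\le \|u\|_{L^\infty}}|h'(t)|\big)|u(x)|$, whence $\|h(u)\|_{L^2}\le \big(\sup_{|t|\le C_s\|u\|_{H^s}}|h'(t)|\big)\|u\|_{H^s}$. For the Gagliardo term I first invoke the chain rule in Sobolev spaces: as $u\in H^{1+\varepsilon}\subset W^{1,2}$ is bounded and $h\in C^1$ with $h'$ bounded on the range of $u$, we have $h(u)\in W^{1,2}$ with $\nabla h(u)=h'(u)\nabla u$, so it suffices to bound $[\nabla h(u)]_{H^\varepsilon}\le \|h'(u)\nabla u\|_{H^\varepsilon}$.

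The key step is to recognize that $\nabla u\in H^\varepsilon(\Omega)$ with $\|\nabla u\|_{H^\varepsilon}\le \|u\|_{H^s}$, while $h'(u)$ is \emph{more} than merely Hölder continuous: applying Lemma \ref{lem:QL-NL-f} to $g=h'$ (which lies in $C^{1,1}_{\mathrm{loc}}$ precisely because $h\in C^{2,1}_{\mathrm{loc}}$) and taking $v=0$ yields $h'(u)\in H^{s-\varepsilon/2}=H^{1+\varepsilon/2}(\Omega)$ together with the bound $\|h'(u)\|_{H^{1+\varepsilon/2}}\le |h'(0)|\,\|1\|_{H^{1+\varepsilon/2}}+C\,\Phi_{h'}(\|u\|_{H^s})\|u\|_{H^s}=:\chi_1(\|u\|_{H^s})$, a non-decreasing function of $\|u\|_{H^s}$. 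I then close the estimate with the fractional multiplication theorem (standard Sobolev product rule, see e.g. Runst--Sickel), using a bounded extension to all of $\R^d$: since $s_1=1+\varepsilon/2$, $s_2=s_3=\varepsilon$ and $d=2$ satisfy $s_1,s_2\ge s_3$ and $s_1+s_2-s_3=1+\varepsilon/2>d/2$, the pointwise product is continuous as $H^{1+\varepsilon/2}(\Omega)\cdot H^{\varepsilon}(\Omega)\hookrightarrow H^{\varepsilon}(\Omega)$, giving
\[
\|h'(u)\nabla u\|_{H^\varepsilon}\le C\,\|h'(u)\|_{H^{1+\varepsilon/2}}\,\|\nabla u\|_{H^\varepsilon}\le C\,\chi_1(\|u\|_{H^s})\,\|u\|_{H^s}.
\]
Combining the two bounds proves \eqref{eq:ql-nl-D} with $\chi(r)=1+\sup_{|t|\le C_s r}|h'(t)|+C\,\chi_1(r)$, which is non-decreasing and strictly positive.

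The main obstacle is precisely this last multiplicative estimate at fractional order $\varepsilon$: the hands-on splitting of the Gagliardo seminorm used for \eqref{eq:sob-hold-product} does \emph{not} close here, because the only Hölder regularity available for $h'(u)$ has exponent $\varepsilon$ (inherited from $u\in C^{0,\varepsilon}$), which is exactly the seminorm order and leaves a non-integrable $|x-y|^{-d}$ singularity in the term $\int\!\!\int |\nabla u(y)|^2|h'(u(x))-h'(u(y))|^2|x-y|^{-d-2\varepsilon}\,dx\,dy$. The fix --- and the reason for first upgrading $h'(u)$ to $H^{1+\varepsilon/2}$ via Lemma \ref{lem:QL-NL-f} rather than merely to a Hölder space --- is to feed the genuine fractional regularity of $h'(u)$ into the multiplication theorem, which exploits more than Hölder continuity. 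This also clarifies why the hypothesis $h\in C^{2,1}_{\mathrm{loc}}$ (one more derivative than for the $f$-part in Lemma \ref{lem:QL-NL-f}) is needed: it is what makes $h'\in C^{1,1}_{\mathrm{loc}}$ and thereby lets us apply Lemma \ref{lem:QL-NL-f} to $h'$ itself.
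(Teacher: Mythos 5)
Your proof is correct and follows essentially the same route as the paper's: the $L^2$ part via $h(0)=0$, the reduction of the Gagliardo term to $h'(u)\nabla u$, the application of Lemma \ref{lem:QL-NL-f} to $h'\in C^{1,1}_{\mathrm{loc}}$ (with $v=0$) to place $h'(u)$ in $H^{s-\varepsilon/2}(\Omega)$ with $s-\varepsilon/2>d/2$, and the Sobolev multiplication estimate $H^{s-\varepsilon/2}\cdot H^{\varepsilon}\hookrightarrow H^{\varepsilon}$ to close. Your added explanation of why the elementary Hölder--Sobolev product bound \eqref{eq:sob-hold-product} cannot be reused here is a useful clarification but does not change the argument.
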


    \begin{proof}
        %We show inequality \ref{eq:ql-nl-D}, 
        \textcolor{black}{W}e use the same notation as in the previous lemma. Let $u \in H^s(\Omega)$, as \textcolor{black}{previously} the $L^2$ part of the norm is easy to estimate \textcolor{black}{given that $h(0)=0$}, hence we focus on the Gagliardo seminorm of $\nabla h(u) = h'(u) \nabla u$. As $h' \in C^{1,1}_{\text{loc}}(\R)$, 
        Lemma \ref{lem:QL-NL-f} implies that $h'(u) \in H^{s-\varepsilon/2}(\Omega)$. \textcolor{black}{Recall that $s=d/2+\varepsilon$ so that} $s-\varepsilon/2 > d/2$, Sobolev algebra properties give us that there exists $C_{\varepsilon,s}>0$ such that
        \begin{equation}
            \|h'(u) \nabla u \|_{H^{\varepsilon}(\Omega)} \leq C_{\varepsilon,s} \|h'(u)\|_{H^{s-\varepsilon/2}(\Omega)} \|\nabla u\|_{H^{\varepsilon}(\Omega)}.
        \end{equation}
        Then we conclude using \eqref{eq:ql-nl-f} and direct computations.
    \end{proof}

    Now we compile everything in order to show that $\mathcal{F}$ satisfies Assumption \ref{item:F1}. For the bilinear part, using the continuity of $\div$ from $H^{s}(\Omega)$ to $H^{s-1}(\Omega)$, and the Sobolev algebra properties, we have, for all $u,v \in H^{s+1}(\Omega)$,
    \begin{equation}
        \|\div(\widetilde{D}(u)\nabla v)\|_{H^{s-1}} \leq C \|\widetilde{D}(u) \nabla v\|_{H^s} \leq C' \|\widetilde{D}(u)\|_{H^{s}} \|\nabla v\|_{H^{s}},
    \end{equation}
    where we used again that $s=d/2+\varepsilon>d/2$.
    Then using Lemma \ref{lem:QL-NL-D}, we obtain
    \begin{equation}
        \|\div(\widetilde{D}(u)\nabla v)\|_{H^{s-1}} \leq C''\chi(\|u\|_{H^{s}}) \|u\|_{H^{s}} \| v\|_{H^{s+1}}.
    \end{equation}
{\color{black} 
Hence as $\chi$ is non-decreasing, thanks to Remark \ref{rem:NL-sufficient}, we see that the bilinear part of $\mathcal{F}$ satisfies Assumption \ref{item:F1} with $\Phi=Id_\R$.} For the remaining part, applying Lemma \ref{lem:QL-NL-f} to $\widetilde{f}$ and with Remark \ref{rem:ql-nl-phi}, we see that it also satisfies Assumption \ref{item:F1} with $\Phi$ defined in Lemma \ref{lem:QL-NL-f}. 
    Hence $\mathcal{F}$ satisfies Assumption \ref{item:F1}.

%\section{Conclusion}
\section{Acknowledgements}
\textcolor{black}{The authors} would like to thank Thomas Buchholtzer and Tanguy Lourme for their support during the writing of this article. \textcolor{black}{This work was supported by the ANR-Tremplin StarPDE (ANR-24-ERCS-0010).}
\appendix
\section{Finite dimensional F-equivalence}
We consider a finite dimensional control system 
\begin{equation}
    \dot{x} = Ax + Bu,
\end{equation}
with $x \in \textcolor{black}{\C}^n, \, u \in \textcolor{black}{\C}, \,  A \in M_n(\textcolor{black}{\C})$ and $B \in \textcolor{black}{\C}^n$.  
%\todo[author=Vincent]{Comme expliqué dans mon mail, j'ai changé pas mal de choses ici. N'hésite à me dire si tu veux qu'on modifie/ajoute certains points !} 

Here is the main result of this section, the proof is direct by computations (see \cite[Theorem 4.1]{Coron2015} for a more general case).
%\todo[author=Amaury]{Là, je suggère qu’on soit plus poli: l’erreur est en quelque sorte une typo un peu plus subtile (pour des gens comme Jean-Michel, ça compte comme des quasi typos). Je propose ``is adapted from [13]'' et dans la preuve ``the difference with [13] is that [13] assumes that thethe matrix is nilpotent,  but'', les gens comprendront très bien ce que ça veut dire :)} 
\begin{thm}\label{thm:ffeq}
    Let $\lambda \in \R$ and $(A,B)$ be controllable, then there exists one and only one $(T,K) \in \gl(n,\textcolor{black}{\C}) \times \textcolor{black}{\C}^{1  \times n}$ such that 
    \begin{equation}\label{eq:ff-eq}
        \begin{cases} T(A+BK)=(A-\lambda )T, \\  
        TB=B.\end{cases}
    \end{equation}
    Furthermore, $K_1, \dots, K_n$ are polynomials in $\lambda$, and if $A$ is \textcolor{black}{diagonalizable}, they have no constant term.
\end{thm}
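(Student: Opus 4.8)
The plan is to decouple the two unknowns: $K$ will be pinned down by single-input pole placement and $T$ by a commutant argument, both relying on the fact that controllability makes $A$ (and $A-\lambda$, and every $A+BK$) non-derogatory. First I would record that, since $(A,B)$ is controllable with a scalar input, $B$ is a cyclic vector for $A$, i.e. $B, AB, \dots, A^{n-1}B$ form a basis of $\C^n$; the same is true of $A-\lambda$ (subtracting a scalar does not change the commutant) and of $(A+BK,B)$ for every $K$ (feedback preserves controllability). Since $T$ is invertible, \eqref{eq:ff-eq} forces $A+BK=T^{-1}(A-\lambda)T$, so $A+BK$ must have characteristic polynomial $\chi_{A-\lambda}(s)=\det(sI-(A-\lambda))=\chi_A(s+\lambda)$. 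Because the feedback assigning a prescribed closed-loop characteristic polynomial to a single-input controllable pair is unique, this determines $K$ completely: by Ackermann's formula $K=K(\lambda):=-e_n^{\top}\mathcal{C}^{-1}\,\chi_A(A+\lambda I)$, with $\mathcal{C}=[\,B,AB,\dots,A^{n-1}B\,]$.

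The heart of the argument is to show that, for this $K(\lambda)$, there is one and only one admissible $T$. With $K=K(\lambda)$ the matrices $A+BK$ and $A-\lambda$ are non-derogatory and share the characteristic polynomial $\chi_A(\cdot+\lambda)$, hence are similar; I fix one conjugation $T_0$ with $T_0(A+BK)T_0^{-1}=A-\lambda$. Every invertible $T$ satisfying the first equation of \eqref{eq:ff-eq} is of the form $T=MT_0$ with $M$ in the commutant of $A-\lambda$, which by cyclicity is exactly the algebra of polynomials in $A$. The normalization $TB=B$ then reads $p(A)\,c=B$, where $c:=T_0B$ and $M=p(A)$. Now $c$ is a cyclic vector for $A-\lambda$, being the image under $T_0$ of the cyclic vector $B$ of $A+BK$, hence a cyclic vector for $A$ as well; therefore $p\mapsto p(A)c$ is a bijection from polynomials of degree $<n$ onto $\C^n$, giving a unique $p$. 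Finally $p(A)$ is invertible, for otherwise $B=p(A)c$ would lie in the proper $A$-invariant subspace $\mathrm{Range}\,p(A)$, contradicting cyclicity of $B$. Thus $T=p(A)T_0$ is the unique admissible transform, and $(T,K)$ exists and is unique.

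The remaining assertions follow by inspection of $K(\lambda)$. The entries of the matrix $\chi_A(A+\lambda I)$ are polynomials in $\lambda$, while $\mathcal{C}$ and $e_n$ are independent of $\lambda$, so each $K_i(\lambda)$ is a polynomial in $\lambda$. For the constant term, the Cayley--Hamilton theorem gives $\chi_A(A)=0$, i.e. $\chi_A(A+\lambda I)$ vanishes at $\lambda=0$, so $K(0)=0$ and the $K_i$ have no constant term; this covers the claimed diagonalizable case (and in fact holds for any controllable $A$). Equivalently, at $\lambda=0$ the pair $(I,0)$ already solves \eqref{eq:ff-eq}, so uniqueness forces it to be the solution.

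The main obstacle is the second step: extracting from the large family of conjugations the single one compatible with $TB=B$ and proving it is invertible. The two delicate points are the identification of the commutant of the non-derogatory matrix $A-\lambda$ with the polynomial algebra of $A$, and the use of the cyclicity of $c=T_0B$ both to solve $p(A)c=B$ uniquely and to exclude a singular $p(A)$. The pole-placement and Cayley--Hamilton ingredients are classical and cause no difficulty.
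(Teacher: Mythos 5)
Your proof is correct, and it takes a genuinely different route from the paper's. The paper delegates existence, uniqueness and the polynomial dependence of $(T,K)$ on $\lambda$ to Theorem 4.1 of Coron--Gagnon--Morancey (with a remark that the nilpotency assumption there can be relaxed), and then proves the ``no constant term'' claim only in the diagonalizable case by reducing to $A$ diagonal and reading off $K_i=-\lambda T_{ii}/B_i$ from the $(i,i)$ projection of the first equation of \eqref{eq:ff-eq}. You instead give a self-contained argument: $K$ is pinned down by single-input pole placement (Ackermann's formula applied to the target polynomial $\chi_A(\cdot+\lambda)$), and $T$ by identifying the commutant of the non-derogatory matrix $A-\lambda$ with the polynomial algebra of $A$ and solving $p(A)c=B$ against the cyclic vector $c=T_0B$; all the steps (cyclicity of $B$, of $c$, invertibility of $p(A)$ via $A$-invariance of $\mathrm{Range}\,p(A)$) are sound. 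What your approach buys: an explicit closed form $K(\lambda)=-e_n^{\top}\mathcal{C}^{-1}\chi_A(A+\lambda I)$, and a strictly stronger conclusion --- by Cayley--Hamilton (or by your uniqueness-at-$\lambda=0$ remark) the $K_i$ have no constant term for \emph{every} controllable pair, with no diagonalizability hypothesis; this is exactly the property used later in Proposition \ref{prop:dense-feq}, so your version would even let the paper drop that hypothesis from the statement. What the paper's route buys is brevity and, in the diagonal case, the explicit relation $K_i=-\lambda T_{ii}/B_i$.
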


\section{Nonlinear parabolic systems}\label{sec:Nonlin-ps}
\textcolor{black}{
In this Appendix, we show well-posedness and stability results for nonlinear parabolic equations, with the following settings:}

\begin{enumerate}[label=(H\arabic*)]
    \item \label{item:Hyp1}  Let $A$ be a diagonal parabolic operator, i.e. it satisfies \ref{item:A1} (and we suppose that $(e_n)_{n\geq 1}$ is a Hilbert basis), \ref{item:A2}, \ref{item:A3}, and $\lambda >0$ such that 
\begin{equation*}
    \forall n \geq 1, \quad \Re(\lambda_n) < -\lambda.
\end{equation*}
\item \label{item:Hyp2} Let $\mathcal{F}$ be a map from $D_{1/2}(A)$ to $D_{-1/2}(A)$, such that there exists $\eta , K>0$ and $\Phi : \R_{\geq 0} \rightarrow \R_{\geq 0}$ non-decreasing continuous at 0 with $\Phi(0)=0$, which satisfies the following conditions: for all $u,v\in D_{\gamma}(A)$ with $\|u\|_{H}, \|v\|_H \leq \eta$, we have
\begin{align}
\|\mathcal{F}(u)\|_{D_{-1/2}(A)}
    &\le \Phi(\|u\|_{H}) \,\|u\|_{D_{1/2}(A)}, \tag{H2.1}\label{eq:H21}\\
\|\mathcal{F}(u) - \mathcal{F}(v)\|_{D_{-1/2}(A)}
    &\le K(\|u\|_{D_{1/2}(A)} + \|v\|_{D_{1/2}(A)})\,\|u-v\|_{H} \tag{H2.2}\label{eq:H22}\\
    &\quad + K\Phi(\|u\|_{H} + \|v\|_{H})\,\|u-v\|_{D_{1/2}(A)} \notag.
\end{align}
\end{enumerate}
Then we have the following proposition.
\begin{prop}\label{prop:A-nonlin-wp}
    There exists $\delta > 0$ such that for every $u_0 \in H$, if $\|u_0\|_H \leq \delta$, there exists a unique solution $u \in C^0_b([0,+\infty);H) \cap L^2((0,+\infty);D_{1/2}(A))$ to the following system
    \begin{equation}\label{eq:A-nonlinear-system}
        \begin{cases}
            \d_t u = Au + \mathcal{F}(u), \\
            u(0)=u_0.
        \end{cases}
    \end{equation}
    Moreover, the previous system is exponentially stable, more precisely 
    \begin{equation}
        \forall t \geq 0, \quad ||u(t)||_H \leq e^{-\lambda t}||u_0||_H.
    \end{equation}
\end{prop}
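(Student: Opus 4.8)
\emph{Proof sketch.} The plan is to solve the mild formulation of \eqref{eq:A-nonlinear-system} by a Banach fixed-point argument on the whole half-line, and then to extract the sharp stability constant by a separate energy estimate, since the contraction argument is blind to the precise rate and constant. I work in $X := C^0_b([0,+\infty);H)\cap L^2((0,+\infty);D_{1/2}(A))$ with $\|u\|_X := \sup_{t\ge 0}\|u(t)\|_H + \|u\|_{L^2((0,+\infty);D_{1/2}(A))}$, and with the mild map $\mathcal{N}(u)(t) = e^{tA}u_0 + \int_0^t e^{(t-\tau)A}\mathcal{F}(u(\tau))\,\mathrm{d}\tau$. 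The first task is to record three linear estimates, all obtained by diagonalizing in the orthonormal basis $(e_n)$ and using $\Re(\lambda_n)<-\lambda$ (from \ref{item:Hyp1}) together with \ref{item:A3}. Namely: $\|e^{tA}\|_{\mathcal{L}(H)}\le e^{-\lambda t}$; the homogeneous smoothing bound $\|e^{\cdot A}u_0\|_{L^2((0,+\infty);D_{1/2}(A))}\lesssim \|u_0\|_H$; and the boundedness of the Duhamel operator $\Gamma f := \int_0^{\cdot} e^{(\cdot-\tau)A}f(\tau)\,\mathrm{d}\tau$ from $L^2((0,+\infty);D_{-1/2}(A))$ into $X$. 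For the last one, the $n$-th Fourier coefficient of $\Gamma f$ is the causal convolution $e^{\lambda_n\cdot}\ast f_n$, whose $L^2$ norm is bounded by $|\Re\lambda_n|^{-1}\|f_n\|_{L^2}$; the inequality $(1+|\lambda_n|^2)^{1/2}\lesssim |\Re\lambda_n|$ (consequence of \ref{item:A3} and $|\Re\lambda_n|\ge\lambda$) then gives both the $L^2(D_{1/2})$ and the $C_b(H)$ bounds. This is the Hilbert-space maximal $L^2$-regularity, here made explicit.

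Next I would run the contraction on a ball $B_R=\{\|u\|_X\le R\}$ with $R\le\eta$ and $\delta=\|u_0\|_H$ small. The smallness factor in \eqref{eq:H21} is crucial: pointwise $\|\mathcal{F}(u(t))\|_{D_{-1/2}(A)}\le \Phi(\|u(t)\|_H)\|u(t)\|_{D_{1/2}(A)}\le \Phi(R)\|u(t)\|_{D_{1/2}(A)}$, hence $\|\mathcal{F}(u)\|_{L^2(D_{-1/2})}\le \Phi(R)\|u\|_{L^2(D_{1/2})}\le \Phi(R)R$. Combined with the linear estimates this yields $\|\mathcal{N}(u)\|_X\le C_0\delta + C_1\Phi(R)R$; since $\Phi(R)\to 0$ as $R\to 0$, I first fix $R$ so that $C_1\Phi(R)\le 1/2$ and then $\delta$ so that $C_0\delta\le R/2$, making $\mathcal{N}$ a self-map of $B_R$. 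For contraction I estimate $\mathcal{N}(u)-\mathcal{N}(v)=\Gamma(\mathcal{F}(u)-\mathcal{F}(v))$ via \eqref{eq:H22}, bounding $\|u-v\|_H$ pointwise by $\|u-v\|_X$ and the $D_{1/2}$-factors in $L^2_t$ by $R$; this gives a Lipschitz constant $\le C_1 K\big(2R+\Phi(2R)\big)$, which is $<1$ after shrinking $R$. Banach's theorem produces a unique fixed point $u\in B_R$, i.e. a mild solution in $X$, and uniqueness in the full class follows by the same Grönwall-type estimate on any interval.

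The subtle point is the stability bound with constant \emph{exactly} $1$, which the fixed-point estimates (giving only $\|u(t)\|_H\le C\|u_0\|_H$) cannot deliver; for this I use an energy estimate. Since $u\in L^2(D_{1/2}(A))$ and $\partial_t u = Au+\mathcal{F}(u)\in L^2(D_{-1/2}(A))$, the map $t\mapsto \|u(t)\|_H^2$ is absolutely continuous with $\tfrac12\tfrac{\mathrm{d}}{\mathrm{d}t}\|u\|_H^2 = \Re\langle \partial_t u,u\rangle_{D_{-1/2},D_{1/2}}$ (Lions--Magenes for the triple $(D_{1/2}(A),H,D_{-1/2}(A))$). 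Diagonalizing, $\Re\langle Au,u\rangle = -\sum_n |\Re\lambda_n|\,|u_n|^2$, while \eqref{eq:H21} gives $\Re\langle \mathcal{F}(u),u\rangle\le \Phi(\|u\|_H)\|u\|_{D_{1/2}}^2$. The key algebraic fact, using \ref{item:A3} and the \emph{strict} gap $\Re(\lambda_n)<-\lambda$ of \ref{item:Hyp1}, is that $M:=\sup_n (1+|\lambda_n|^2)^{1/2}/(|\Re\lambda_n|-\lambda)<+\infty$ (the ratio tends to $\sqrt{1+C^{-2}}$ and the denominator is bounded below by $|\Re\lambda_1|-\lambda>0$). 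Thus $\|u\|_{D_{1/2}}^2\le M\sum_n (|\Re\lambda_n|-\lambda)|u_n|^2$, and choosing $R$ also small enough that $\Phi(R)\le 1/M$ gives $\Phi(\|u(t)\|_H)\|u\|_{D_{1/2}}^2\le \sum_n|\Re\lambda_n|\,|u_n|^2-\lambda\|u\|_H^2$, whence $\tfrac{\mathrm{d}}{\mathrm{d}t}\|u\|_H^2\le -2\lambda\|u\|_H^2$. Grönwall's lemma then yields $\|u(t)\|_H\le e^{-\lambda t}\|u_0\|_H$ with the sharp constant.

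I expect the main obstacle to be precisely this last step: the existence part is routine once the linear estimates are in place, but reconciling it with the sharp decay rate requires abandoning the fixed-point bounds and running the energy computation above, where the finiteness of $M$ — and hence the strict spectral gap in \ref{item:Hyp1} and the sectoriality condition \ref{item:A3} — is genuinely used. A secondary technical point is the rigorous justification of the energy identity (absolute continuity of $\|u(\cdot)\|_H^2$ and the duality pairing formula), which is standard for the Gelfand triple but should be stated carefully.
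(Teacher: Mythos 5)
Your proposal is correct and follows essentially the same route as the paper: mild formulation with the inhomogeneous linear estimates obtained by diagonalization and Young's convolution inequality, a Banach fixed point on a small ball of $C^0_b([0,+\infty);H)\cap L^2((0,+\infty);D_{1/2}(A))$ using \eqref{eq:H21}--\eqref{eq:H22}, and a separate energy estimate via the Gelfand-triple duality pairing to recover the decay with constant $1$. The only difference is cosmetic: you absorb $\Phi\,\|u\|_{D_{1/2}(A)}^2$ into the dissipation through the single constant $M=\sup_n(1+|\lambda_n|^2)^{1/2}/(|\Re\lambda_n|-\lambda)$, whereas the paper splits $\Re\scalp{Au}{u}$ as a convex combination of $-(\lambda+\varepsilon)\|u\|_H^2$ and $-c\|u\|_{D_{1/2}(A)}^2$; both rest on the same strict spectral gap and on \ref{item:A3}.
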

The proof is based on a fixed-point argument, in order to do it, we need to study the following inhomogeneous system
\begin{equation}\label{eq:A-inhom-cauchy}
    \begin{cases}
            \d_t u = Au + g(t), \\
            u(0)=u_0,
    \end{cases}
\end{equation}
where $g \in  L^2((0,+\infty); D_{-1/2}(A))$.
\begin{lem}\label{lem:inhom-cauchy}
    Let $g \in L^2((0,+\infty); D_{-1/2}(A))$, for every $u_0 \in H$, there exists a unique solution $u \in C^0_b([0,+\infty);H) \cap L^2((0,+\infty); D_{1/2}(A))$ to \eqref{eq:A-inhom-cauchy}. Moreover, there exists $C \geq 1 $ independent of $u_0,u$ and $g$ such that
    \begin{equation}
        ||u||_{C^0([0,+\infty);H)}+||u||_{L^2((0,+\infty); D_{1/2}(A))}\leq C(||u_0||_{H} + ||g||_{L^2((0,+\infty); D_{-1/2}(A))}).
    \end{equation}
\end{lem}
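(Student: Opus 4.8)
The plan is to reduce everything to the diagonal structure of $A$ and to extract from the sectoriality hypothesis \ref{item:A3} a coercivity estimate strong enough to control the $D_{1/2}(A)$ norm. Writing $\lambda_n=\Re(\lambda_n)+i\Im(\lambda_n)$, hypothesis \ref{item:A3} gives $|\lambda_n|\le\sqrt{1+C^{-2}}\,|\Re(\lambda_n)|$, so that $-\Re(\lambda_n)\ge c_0|\lambda_n|$ for some $c_0>0$; combined with \ref{item:Hyp1}, which forces $-\Re(\lambda_n)>\lambda>0$, and with $(1+|\lambda_n|^2)^{1/2}\le 1+|\lambda_n|$, this yields a constant $c_1>0$ such that $-\Re(\lambda_n)\ge c_1(1+|\lambda_n|^2)^{1/2}$ for every $n$. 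Expanding $u=\sum_n u_ne_n$, this means $-\Re\scalp{Au}{u}_H=\sum_n(-\Re(\lambda_n))|u_n|^2\ge c_1\|u\|_{D_{1/2}(A)}^2$, i.e. $-A$ is coercive from $D_{1/2}(A)$ into $D_{-1/2}(A)$. This is the decisive point from which the maximal-regularity estimate will follow.

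Next I would establish the a priori energy estimate. Pairing the equation with $u$ through the Gelfand triple $(D_{-1/2}(A),H,D_{1/2}(A))$ gives, formally, $\tfrac12\tfrac{d}{dt}\|u\|_H^2=\Re\scalp{Au}{u}_H+\Re\scalp{g}{u}_H$. Using the coercivity above together with Young's inequality $|\scalp{g}{u}|\le\tfrac{c_1}{2}\|u\|_{D_{1/2}(A)}^2+\tfrac{1}{2c_1}\|g\|_{D_{-1/2}(A)}^2$ yields $\tfrac{d}{dt}\|u\|_H^2+c_1\|u\|_{D_{1/2}(A)}^2\le c_1^{-1}\|g\|_{D_{-1/2}(A)}^2$. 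Integrating over $(0,T)$ and letting $T\to+\infty$ controls simultaneously $\sup_t\|u(t)\|_H^2$ and $\int_0^\infty\|u\|_{D_{1/2}(A)}^2$ by $\|u_0\|_H^2+c_1^{-1}\|g\|_{L^2((0,\infty);D_{-1/2}(A))}^2$, which is exactly the claimed bound with $C$ depending only on $c_1$. Applied to the difference of two solutions with $u_0=0$ and $g=0$, the same inequality forces $u\equiv 0$, giving uniqueness.

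For existence I would run a Galerkin scheme: let $P_N$ be the orthogonal projection onto $\mathrm{span}(e_1,\dots,e_N)$ and solve the finite-dimensional linear ODE $\partial_tu^N=Au^N+P_Ng$, $u^N(0)=P_Nu_0$, which has a unique absolutely continuous solution since $P_Ng\in L^2$. The energy computation above is rigorous on this finite-dimensional space and uniform in $N$, so $(u^N)$ is bounded in $L^2((0,\infty);D_{1/2}(A))\cap L^\infty((0,\infty);H)$ and $\partial_tu^N=Au^N+P_Ng$ is bounded in $L^2((0,\infty);D_{-1/2}(A))$. Extracting a weakly convergent subsequence and passing to the limit in the linear weak formulation produces a solution $u$, with the estimate preserved by weak lower semicontinuity. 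Membership in $C^0_b([0,\infty);H)$ then follows from the standard trace/interpolation embedding $L^2(0,T;D_{1/2}(A))\cap H^1(0,T;D_{-1/2}(A))\hookrightarrow C([0,T];H)$ valid for any Gelfand triple, together with the uniform-in-$T$ bound. Alternatively one can bypass Galerkin and write the solution explicitly as $u_n(t)=e^{\lambda_nt}(u_0)_n+\int_0^te^{\lambda_n(t-\sigma)}g_n(\sigma)\,d\sigma$, checking directly via the same coercivity that the series converges in the required spaces.

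I expect the only genuine subtlety to be the passage from the pointwise spectral-gap hypothesis \ref{item:A3} to coercivity in the $D_{1/2}(A)$ norm; once that is secured, the remainder is the classical Lions energy method for parabolic evolution equations and presents no essential difficulty.
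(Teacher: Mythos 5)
Your proof is correct, but it follows a genuinely different route from the paper's. The paper proves the lemma by writing the solution explicitly through the Duhamel formula $u(t)=e^{tA}u_0+\int_0^te^{(t-s)A}g(s)\,ds$ and estimating each piece mode by mode: the spectral gap $c|\lambda_n|\le|\Re\lambda_n|$ from \ref{item:A3} controls the semigroup part, Cauchy--Schwarz gives the $C^0_b(H)$ bound on the integral term, and Young's convolution inequality gives the $L^2(D_{1/2}(A))$ bound; continuity comes from Lemma \ref{lem:s-conv}. You instead extract the coercivity $-\Re\scalp{Au}{u}\ge c_1\|u\|_{D_{1/2}(A)}^2$ (your derivation of $-\Re(\lambda_n)\ge c_1(1+|\lambda_n|^2)^{1/2}$ from \ref{item:A3} together with the gap $\Re(\lambda_n)<-\lambda$ in \ref{item:Hyp1} is correct) and then run the classical Lions energy method with a Galerkin approximation — which is rigorous here since $A$ is diagonal in the basis $(e_n)$, so the truncated system is a genuine finite-dimensional linear ODE, the energy identity is exact on it, and the bounds pass to the weak limit. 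Your approach is more abstract and would survive a non-diagonal perturbation of $A$ with a coercive sesquilinear form, at the cost of invoking the Galerkin machinery and the trace embedding $L^2(0,T;D_{1/2}(A))\cap H^1(0,T;D_{-1/2}(A))\hookrightarrow C([0,T];H)$; the paper's computation is more elementary and self-contained (it never needs to justify the chain rule for $t\mapsto\|u(t)\|_H^2$, which is the one point you rightly flag as needing care), and it produces the explicit solution formula that is reused elsewhere. Your closing remark that one could instead verify the explicit spectral series directly is precisely what the paper does.
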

\begin{remark}
    In the special case where $A$ is a self-adjoint operator, this result can be found in \cite[Chapter 10]{Brezis2010} and is attributed to Lions.
\end{remark}
\begin{proof}
See \cite[Chapter 10]{Brezis2010} for a proof when $A$ is self-adjoint, in our general case, the proof adapts similarly.
\end{proof}

\textcolor{black}{
Now, we can use a fixed point argument to prove the first part of Proposition \ref{prop:A-nonlin-wp}.}

\begin{proof}[Proof]
\textcolor{black}{
First, we define $X = C^0_b([0,{+\infty});H) \cap L^2((0,{+\infty}); D_{1/2}(A))$, it defines a Banach space with the following norm
\begin{equation}
    \forall u \in X, \quad ||u||_{X} := ||u||_{ C^0([0,{+\infty});H)} + ||u||_{ L^2((0,{+\infty}); D_{1/2}(A))}.
\end{equation}
Let $u_0 \in H$ be such that  $||u_0||_H \leq \delta$ and \(\delta \in (0,\frac{\eta}{2C})\) to be fixed later on, where $C$ is the constant of Lemma \ref{lem:inhom-cauchy}, we set $\kappa = 2C \delta$ and we define the following closed subset
$B(\kappa) = \{ u \in X \ |\ ||u||_{X} \leq \kappa \}.$
Let us define the mapping $M$ which, to each $u \in B(\kappa)$, associates $w$, the solution to
\begin{equation}
    \begin{cases}
        \partial_t w = A w + \mathcal{F}(u(t)), \\
        w(0)=u_0.
    \end{cases}
\end{equation}
Let $u \in B(\kappa)$, hence for all  $t\geq 0$ we have $\|u(t)\|_H \leq \kappa$, thus by \eqref{eq:H21} we have
\begin{equation}
    ||\mathcal{F}(u)||_{L^2((0,{+\infty}); D_{-1/2}(A))}^2 \leq \Phi(\kappa)^2 \int_0^{+\infty} ||u(s)||_{D_{1/2}(A)}^2 \, ds \leq \kappa^2 \Phi(\kappa)^2.
\end{equation}}

Hence by Lemma \ref{lem:inhom-cauchy} we get 
\begin{equation}\label{eq:estimatenorms}
   ||M(u)||_{X} \leq C(||u_0||_H +||\mathcal{F}(u)||_{L^2((0,{+\infty});D_{-1/2}(A))}) \leq C\kappa (\frac{1}{2C}+ \Phi(\kappa)) .
\end{equation}
As $\Phi$ is continuous at 0, we can choose $\delta$ small enough so that $\Phi(\kappa)\leq \frac{1}{2C}$ holds, this ensures that $M$ is a mapping from $B(\kappa)$ to itself. Moreover, let $u_1,u_2 \in B(\kappa)$. By first applying Lemma~\ref{lem:inhom-cauchy} and then using \eqref{eq:H22}, we have
\begin{align}
    ||M(u_1)-M(u_2)||_{X}^2 &\leq 2C^2K^2(||u_1||_X^2 + ||u_2||_X^2 + \frac{1}{2}\Phi(2\kappa)^2)||u_1-u_2||_{X}^2 \\
    &\leq C^2 K^2 (4\kappa^2 + \Phi(2\kappa)^2) ||u_1-u_2||_{X}^2.
\end{align}
This shows that taking $\delta$ small enough ensures that \( M \) is a contraction on \( B(\kappa) \), allowing us to apply the Picard fixed-point theorem to conclude that \eqref{eq:A-nonlinear-system} is well-posed in $X$ for small initial data.
\end{proof}

\textcolor{black}{Now, let $u_0 \in H$, as $u$ is a mild solution to \eqref{eq:A-nonlinear-system} its weak derivative is $Du+\mathcal{F}(u)$, and as $u \in X$ and by the hypothesis on $\mathcal{F}$,
we have $\d_t u \in L^2((0,+\infty);D_{-1/2}(A))$. This implies by classical approximation argument that $||u(\cdot)||_H^2 \in W^{1,1}_{loc}(0,+\infty)$ and
\begin{equation}\label{eq:weak-deriv}
    \frac{d}{dt} ||v(t)||_{H}^2 = 2 \Re(\scalp{\partial_t v}{v}_{D_{-1/2}(A),D_{1/2}(A)}) \ \text{a.e on} \ (0,+\infty).
\end{equation}
We now proceed to demonstrate the stability part of Proposition \ref{prop:A-nonlin-wp}, which will end the proof.}
\begin{proof}
We keep the same notation as in the previous fixed-point argument. Let $u_0 \in H$ such that $||u_0||_H \leq \delta$,
we denote by $u$ the solution to \eqref{eq:A-nonlinear-system} with $u(0) = u_0$, hence by the previous proof we know that
\begin{equation}
    \forall t \geq 0, \quad ||u(t)||_H \leq \kappa.
\end{equation}
Thus, for almost every $t > 0$, we have thanks to \eqref{eq:weak-deriv} and  \eqref{eq:H21}
\begin{align}
    \frac{1}{2}\frac{d}{dt} ||u||_{H}^2 &=  \Re ( \scalp{\partial_t u}{u}_{D_{-1/2}(A),D_{1/2}(A)}) \\
    &=  \Re ( \scalp{Au + \mathcal{F}(u)}{u}_{D_{-1/2}(A),D_{1/2}(A)}) \\
    &\leq \Re (\scalp{Au}{u}_{D_{-1/2}(A),D_{1/2}(A)}) + \Phi(\kappa) ||u||_{D_{1/2}(A)}^2.
\end{align}
By definition, we have for almost every $t >0$
\begin{equation}
\label{eq:decompAuu}
    \Re (\scalp{Au}{u}_{D_{-1/2}(A),D_{1/2}(A)}) = -\sum_{n\geq 1} |\Re \lambda_n| |u_n(t)|^2.
\end{equation}
Now by Hypothesis \ref{item:Hyp1}, there exists $\varepsilon > 0$ such that $|\Re \lambda_n| \geq \lambda + \varepsilon$ for every $n \geq 1$, thus we have 
\begin{equation}
    \Re (\scalp{Au}{u}_{D_{-1/2}(A),D_{1/2}(A)}) \leq - (\lambda + \varepsilon) ||u||_{H}^2.
\end{equation}
On the other hand, by hypothesis \ref{item:A3} there exists $c >0$ such that $c |\lambda_n| \leq |\Re \lambda_n|$ holds for all $n \geq 1$. Thus we have, using again \eqref{eq:decompAuu},
\begin{equation}
     \Re (\scalp{Au}{u}_{D_{-1/2}(A),D_{1/2}(A)}) \leq - c ||u||_{D_{1/2}(A)}^2.
\end{equation}
Now we set $\alpha= \frac{\varepsilon}{\lambda+\varepsilon}$, then multiplying the two previous inequality by respectively $\alpha$ and $1-\alpha$, we get that for almost every $t > 0$
\begin{equation}
    \frac{1}{2}\frac{d}{dt} ||u||_{H}^2 \leq - \alpha c ||u||_{D_{1/2}(A)}^2 - {\lambda} ||u||_H^2 + \Phi(\kappa) ||u||^2_{D_{1/2}(A)}.
\end{equation}
Hence, if we again reduce $\delta$ such that $\Phi(\kappa) \leq \alpha c$ holds, we can apply a classical variant of Gronwall lemma, and as $u \in C^0([0,+\infty);H)$, we have
\begin{equation}
    \forall t \geq 0, \quad ||u(t)||_H^2 \leq e^{- 2 \lambda t} ||u_0||_H^2,
\end{equation}
which concludes the proof.
\end{proof}

%\section{\textcolor{red}{Temporary section: well-posedness of the nonlinear system}}

%\end{proof}

\section{Proof of Proposition \ref{prop:sg}}
\label{app:proofstab}

%\begin{proof}
%In this Appendix, we prove 
\textcolor{black}{The proof of Proposition \ref{prop:sg} is relatively straightforward and we give it here for completeness}: 
since $A \in \mathcal{L}(H, D(A)')$, $B \in (D(A)')^{\textcolor{black}{m(\lambda)}}$ 
and $K \in \mathcal{L}(H, \C^{\ml})$, we have $A + BK \in \mathcal{L}(H, D(A)')$.

Now we view $A + BK$ as an unbounded operator on $H$ and define $D(A + BK) = \{ x \in H \ | \ (A + BK)x \in H \}$. Let's show that $D(A + BK) = T^{-1}(D(A))$.

Let $x \in D(A + BK)$. Then $(A + BK)x \in H$, and so equality (\ref{eq:f-eq}) implies $D T x \in H$, hence $x \in T^{-1}(D(A))$. Conversely, let $x \in T^{-1}(D(A))$. Then again by (\ref{eq:f-eq}), we get $T(A + BK)x \in H$, but now because $T \in \mathcal{GL}(D(A)')$ and $T_{|H} \in \mathcal{GL}(H)$, we have $(A + BK)x \in H$. We know that $D(A)$ is dense, then since $T$ is an isomorphism on $H$, this shows the density of $D(A + BK)$.

Now we set $S(0) = \mathrm{Id}_H$ and for $t > 0$, we define $S(t) : H \rightarrow H$ as
\begin{equation}
    \forall x \in H, \ S(t)x = T^{-1} e^{t D} Tx.
\end{equation}
Hence, $S := (S(t))_{t \in \R_{\geq 0}}$ is a differentiable semigroup with growth rate at most $-\lambda$ as $(e^{tD})_{t\in \R_{\geq 0}}$ is a differentiable semigroup with the same growth rate. %\textcolor{blue}{(see, for instance, the proof of Proposition \ref{prop:diagp})}.
%\todo[author=Amaury]{On pourrait même faire encore plus simple que cette remarque que j'ajoute, mais dans tous les cas je pense que c'est bien d'avoir une petite justification pour que le reviewer nous croit même si c'est très raisonnable.}
%\todo[author=Vincent]{J'ai modifié un peu, car je n'ai pas l'impression que ca soit en lien avec la proposition B.1. Pour moi c'est du au fait que les deux familles d'opérateus sont conjugés par un isomorphisme.}
Now we have to check that $A + BK$ is the infinitesimal generator of $S$. Let $x \in H$ such that the limit in $H$ as $t \rightarrow 0$ of $\frac{S(t)x - x}{t}$ exists. We have
\begin{equation}
    \frac{S(t)x - x}{t} = T^{-1} \frac{e^{t D} Tx - Tx}{t},
\end{equation}
then the existence of the limit is equivalent to $Tx \in D(A)$ and hence $x \in D(A + BK)$ by what we have shown before. The last part of the proposition is an immediate consequence of semigroup theory for evolution equations, \textcolor{black}{see for instance} \cite[Sec. 4.1]{pazy1983semigroups}
%\todo[author=Amaury]{Je pense qu'on devrait citer au moins la section (c'est l'usage pour un livre)}.

\bibliographystyle{plain}    
\bibliography{References}

\end{document}